\documentclass[10pt]{article}

\usepackage[a4paper, margin=2.5cm]{geometry}

\setlength{\parskip}{0.5em}
\setlength{\parindent}{0em}

\usepackage[figuresright]{rotating}
\usepackage{multirow} 
\usepackage{amsfonts,amsmath,latexsym,amssymb}
\usepackage{amsthm}
\usepackage{mathrsfs,upref}
\usepackage{url}
\usepackage{anyfontsize}
\usepackage{hyperref}

\DeclareMathAlphabet{\mathbbold}{U}{bbold}{m}{n}
\def\1{\mathbbold{1}}
\def\0{\mathbbold{0}}

\def\RR{{\mathbb{R}}}
\def\NN{{\mathbb{N}}}
\def\N{\mathcal{N}}
\def\M{\mathcal{M}}

\newcommand{\diag}{\mbox{\textrm diag$\,$}}
\newcommand{\mc}{\mathcal}

\newtheorem{theorem}{Theorem}[section]       
\newtheorem{lemma}[theorem]{Lemma}               
\newtheorem{corollary}[theorem]{Corollary}

\newtheorem{definition}[theorem]{Definition}
\newtheorem{reduction}[theorem]{Reduction}
\newtheorem{example}[theorem]{Example}

\newtheorem{remark}[theorem]{Remark}

\begin{document}

		\title{Majorizations for probability distributions, column stochastic matrices and their linear preservers}

        \author{Pavel Shteyner \footnote{Department of Mathematics, Bar-Ilan University, Ramat-Gan, 5290002, Israel.	{\tt pavel.shteyner@biu.ac.il}}\\
	}

             \maketitle
		\begin{abstract}
		In this paper, we study majorization for probability distributions and column stochastic matrices. We show that majorizations in general can be reduced to the aforementioned sets. We characterize linear operators that preserve majorization for probability distributions, and show their equivalence to operators preserving vector majorization. Our main result provides a complete characterization of linear preservers of strong majorization for column stochastic matrices, revealing a richer structure of preservers than in the standard setting. As a prerequisite to this characterization, we solve the problem of characterizing linear preservers of majorization for zero-sum vectors, which yields a new structural insight into the classical results of Ando and of Li and Poon.
        
		\end{abstract}

    \noindent {\bf Key words.} matrix majorization, vector majorization, linear preservers.
	
	\medskip\noindent
	{\bf MSC subject classifications.} 15A45, 15A86, 15B51.

\section{Introduction and main results}

Let $M_{n,m}$ denote the space of all real $n \times m$ matrices, with $M_n$ used when $m=n$. For a matrix $X$, its $(i,j)$-entry is denoted by $x_{ij}$. Throughout this paper, we use the following standard matrices:
\begin{itemize}
    \item $I$ denotes the identity matrix
    \item $J$ denotes the matrix of all ones
    \item $0$ denotes the zero vector and the zero matrix of the appropriate sizes.
\end{itemize}

For a matrix $A$, we denote its $j$-th column by $A^{(j)}$ and its $i$-th row by $A_{(i)}$. A matrix represented by its columns is written as $\left(\begin{array}{c|c|c|c} A^{(1)} \ & \ A^{(2)} \ & \ \ldots \ & \ A^{(m)} \end{array}\right)$. The $n \times 2$ submatrix consisting of columns $j_1$ and $j_2$ of $A$ is denoted by $A^{(j_1, j_2)}$.

We write $A \geq 0$ (resp. $v \geq 0$) if every entry of the matrix $A$ (resp. the vector $v$) is nonnegative.

Vectors in $\mathbb{R}^n$ are treated as column vectors and identified with the corresponding $n$-tuples. We denote:
\begin{itemize}
    \item $e_j$ as the $j$-th standard basis vector
    \item $e = (1 \ \ldots \ 1)^t$ as the vector of all ones
    \item $\max(v)$ and $\min(v)$ as the maximum and minimum entries of $v \in \mathbb{R}^n$
    \item $\N = \{1, \ldots, n\}$ and $\M = \{1, \ldots, m\}$
    \item For $v \in \mathbb{R}^n$ and $\mathcal{I} \subseteq \N$, $v_\mathcal{I}$ denotes the vector comprising the entries of $v$ indexed by $\mathcal{I}$
    \item $v^+$ denotes the sum of all positive entries of $v \in \mathbb{R}^n$.
\end{itemize}

We use $M_{n,m}(0,1)$ and $\{0,1\}^n$ to denote the set of $(0,1)$-matrices of size $n \times m$ and $(0,1)$-vectors of size $n$, respectively.

Key vector sets of interest include:
\begin{itemize}
    \item $\0^n = \{v \in \mathbb{R}^n : e^t v = 0\}$, the set of all zero-sum vectors
    \item $\1^n = \{v \in \mathbb{R}^n : e^t v = 1 \text{ and } v \geq 0\}$, the set of probability distributions, see Definition \ref{def:prob}.
\end{itemize}

The set of $n \times n$ permutation matrices is denoted by $P(n)$, with $P_{(ij)} \in P(n)$ representing the matrix obtained by interchanging rows $i$ and $j$ of the identity matrix $I$. For a linear operator $\Phi$ on $\mathbb{R}^n$, its matrix representation in the standard basis is denoted by $[\Phi]$. The cardinality of a set $X$ is denoted by $|X|$.

\bigskip

Majorization is a powerful mathematical framework for comparing degrees of disorder in vectors and matrices, with applications ranging from linear algebra and economic inequality measures to quantum information theory and statistical experiments.
    
	The main notion of this theory is {\em vector majorization}. For a vector $x \in \RR^n$ we let $x^\downarrow$ denote the permutation of its entries in the non-increasing order. For $a,b \in \RR^n$ we say that	$a$ is {\em majorized} by $b$, denoted
	by $a \preceq b$,  if
	$$\sum_{j=1}^k a^\downarrow_{j} \le \sum_{j=1}^k b^\downarrow_{j} \text{ for } k=1, 2, \ldots, n-1 \text{ and } \sum_{j=1}^n a^\downarrow_{j} = \sum_{j=1}^n b^\downarrow_{j}.$$

A matrix is {\em row stochastic} (resp. {\em column stochastic}) if it
is nonnegative and each row (resp. column) sum equals to~$1$. A matrix is {\em doubly stochastic} if it is both row and column stochastic. The sets of all $n \times n$ row, column and doubly stochastic matrices are denoted by $\Omega_n^{row}, \Omega^{col}_n$ and $\Omega_n$, respectively. Similarly, the set of all $n\times m$ column stochastic matrices is denoted by $\Omega^{col}_{n, m}$.

The classical Birkhoff–von Neumann Theorem shows that $\Omega_n$ is the convex hull of $P(n)$.

\begin{theorem}\cite[Theorem I.2.A.2]{MaOlAr11}\label{Birkhoff}
	The elements of the set of permutation matrices $P(n)$ are the extreme points of $\Omega_n$.
	Moreover, $\Omega_n$ is the convex hull of matrices in $P(n)$.
\end{theorem}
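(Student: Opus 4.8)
The plan is to prove the two assertions separately: first that every permutation matrix is an extreme point of $\Omega_n$, and then that every doubly stochastic matrix lies in the convex hull of $P(n)$. Combining the two yields that $P(n)$ is \emph{exactly} the set of extreme points, since once $\Omega_n = \conv P(n)$ is established, no matrix outside $P(n)$ can be extreme: in a finitely generated convex hull every extreme point must coincide with one of the generators.

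For the first assertion I would argue directly. Suppose $P \in P(n)$ satisfies $P = \lambda B + (1-\lambda) C$ with $B, C \in \Omega_n$ and $\lambda \in (0,1)$. Wherever $P$ has a zero entry, the nonnegativity of $B, C$ together with the positivity of $\lambda$ and $1-\lambda$ forces the corresponding entries of both $B$ and $C$ to vanish. Since each row of $P$ contains a single $1$ with all other entries equal to $0$, the row-sum condition then forces $B$ and $C$ to agree with $P$ in every row. Hence $B = C = P$, so $P$ is extreme.

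The heart of the proof is the second assertion, which I would establish by induction on the number of positive entries of $A \in \Omega_n$. The crucial input is the existence of a permutation $\sigma$ with $a_{i\sigma(i)} > 0$ for all $i \in \N$, and this is the step I expect to be the main obstacle, since it rests on a combinatorial fact (the Frobenius--K\"onig theorem, equivalently Hall's marriage condition). Concretely, if no such $\sigma$ existed, the support of $A$ would contain a $k \times (n-k+1)$ all-zero submatrix; but then the $k$ rows meeting this block would carry total mass $k$ confined to only $k-1$ columns, whose total mass across all rows is $k-1$, a contradiction. Granting $\sigma$, set $\lambda = \min_i a_{i\sigma(i)} > 0$ and let $P_\sigma$ be the associated permutation matrix. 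If $\lambda = 1$ then $A = P_\sigma$; otherwise $A' = (1-\lambda)^{-1}(A - \lambda P_\sigma)$ is again doubly stochastic, is nonnegative by the choice of $\lambda$, and has strictly fewer positive entries than $A$ (the minimizing position becomes zero while no new positive entries appear). By the inductive hypothesis $A'$ is a convex combination of permutation matrices, whence so is $A = \lambda P_\sigma + (1-\lambda) A'$.

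For the base case, a matrix in $\Omega_n$ with the minimal possible number of positive entries has exactly one positive entry in each row and each column (each row and column must contain at least one), which together with the unit row and column sums forces every such entry to equal $1$; thus $A$ is itself a permutation matrix. This closes the induction and establishes $\Omega_n = \conv P(n)$, completing the proof.
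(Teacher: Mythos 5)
Your proof is correct. Note that the paper does not prove this statement at all: it is quoted as Theorem I.2.A.2 of Marshall--Olkin--Arnold and used as a known classical result (it enters only in the proof of Lemma \ref{lem:0:prec-sim}, where a doubly stochastic matrix is decomposed as a convex combination of permutation matrices). Your argument is the standard textbook proof: extremality of permutation matrices via the zero pattern and the row-sum constraint, and the convex-hull statement by induction on the number of positive entries, with the Frobenius--K\"onig/Hall step supplying a permutation supported on the positive entries and the mass-counting contradiction ($k$ units of row mass squeezed into $k-1$ columns) justifying that step. The deduction that the extreme points are exactly $P(n)$, because an extreme point of a finitely generated convex hull must be one of the generators, is also sound. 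All steps check out, including the strict decrease in the number of positive entries and the base case of exactly $n$ positive entries forcing a permutation matrix.
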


The famous Hardy, Littlewood and P\'olya theorem expresses vector majorization in terms of doubly stochastic matrices.

\begin{theorem}\label{HLP}\cite[Theorem I.2.B.2]{MaOlAr11}
		Let $a, b \in \RR^n$. Then $a \preceq b$ if and only if $a = Db$ for some $D \in \Omega_n$.
\end{theorem}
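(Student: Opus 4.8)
The plan is to prove the two implications separately, obtaining the ``if'' direction from the Birkhoff--von Neumann Theorem (Theorem~\ref{Birkhoff}) and the ``only if'' direction by an explicit transfer argument.

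For the \emph{if} direction, suppose $a = Db$ with $D \in \Omega_n$. First I would record the dual description of partial sums: for any $x \in \RR^n$ and any $k$, one has $\sum_{j=1}^k x^\downarrow_j = \max\{u^t x : u \in \{0,1\}^n,\ e^t u = k\}$. Writing $D = \sum_\ell \lambda_\ell P_\ell$ as a convex combination of permutation matrices via Theorem~\ref{Birkhoff}, then for every such $u$ I get $u^t a = \sum_\ell \lambda_\ell\, u^t P_\ell b \le \sum_\ell \lambda_\ell \sum_{j=1}^k b^\downarrow_j = \sum_{j=1}^k b^\downarrow_j$, since each $u^t P_\ell b$ is a sum of $k$ entries of $b$ and hence at most the sum of its $k$ largest. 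Taking the maximum over $u$ yields $\sum_{j=1}^k a^\downarrow_j \le \sum_{j=1}^k b^\downarrow_j$, and equality of the full sums follows from $e^t a = e^t D b = e^t b$, using that the columns of $D$ sum to $1$. Hence $a \preceq b$.

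For the \emph{only if} direction, suppose $a \preceq b$. Since both majorization and the relation $a = Db$ are unchanged under independently permuting the entries of $a$ and of $b$ (permutation matrices lie in $\Omega_n$, which is closed under products), I may assume $a = a^\downarrow$ and $b = b^\downarrow$. The strategy is the classical transfer (``Robin Hood'') argument: I will show that $a$ is reachable from $b$ by a finite chain of $T$-transforms, each a doubly stochastic matrix of the form $\lambda I + (1-\lambda)P_{(jk)}$ with $0 \le \lambda \le 1$. If $a \ne b$, let $j$ be the smallest index with $b_j \ne a_j$; the majorization inequality for the first $j$ partial sums together with agreement on the first $j-1$ entries forces $b_j > a_j$. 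Let $k$ be the smallest index exceeding $j$ with $b_k < a_k$, which exists because the full sums agree. Sortedness then gives $b_j > a_j \ge a_k > b_k$, so in particular $b_j > b_k$. Setting $\delta = \min(b_j - a_j,\ a_k - b_k) > 0$ and decreasing the $j$-th entry of $b$ by $\delta$ while increasing the $k$-th entry by $\delta$ produces $b' = Tb$ for a genuine $T$-transform $T$.

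The main obstacle is the inductive verification underlying this chain: one must check that the transfer keeps majorization intact, namely $a \preceq b' \preceq b$, and that a suitable discrepancy measure strictly decreases. Concretely, $b'$ coincides with $a$ in at least one additional coordinate (either the $j$-th or the $k$-th, according to which term attains the minimum defining $\delta$), so the number of coordinates at which the running vector differs from $a$ drops by at least one at each step, guaranteeing termination after at most $n-1$ transfers, at which point the running vector equals $a$. Since each $T$-transform is doubly stochastic and $\Omega_n$ is closed under multiplication, the product of these transforms together with the two sorting permutations is a single matrix $D \in \Omega_n$ with $a = Db$, completing the proof.
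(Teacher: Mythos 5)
The paper does not actually prove Theorem~\ref{HLP}: it is imported verbatim from \cite[Theorem I.2.B.2]{MaOlAr11}, so there is no in-paper argument to compare yours against, and your proposal must stand on its own. Your ``if'' direction is complete and correct --- the dual description $\sum_{j\le k}x^\downarrow_j=\max\{u^tx: u\in\{0,1\}^n,\ e^tu=k\}$ combined with Theorem~\ref{Birkhoff} does the job, and the total-sum equality follows from $e^tD=e^t$.

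The gap is in the ``only if'' direction, and it sits exactly where you park it as ``the main obstacle.'' The single transfer is set up correctly (including $b_j>a_j\ge a_k>b_k$ and the fact that $T$ is a genuine $T$-transform), but the iteration is not: $b'=Tb$ is in general no longer sorted, and both your identification of the next pair and the inference ``agreement on the first $j-1$ entries plus the $j$-th partial-sum inequality forces $b_j>a_j$'' use sortedness of the running vector, because the partial sums in the definition of $\preceq$ are sums of the \emph{largest} entries, not of the initial ones. Concretely, for $a=(3,3,3,3,0)^t$ and $b=(5,4,3,0,0)^t$ your rule gives $j=1$, $k=4$, $\delta=2$ and $b'=(3,4,3,2,0)^t$, which is not non-increasing, so the stated justification fails at the very next step. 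The repair is standard but is precisely the content you omit: one checks $a\preceq b'$ directly by noting that for $j\le l<k$ one has $\sum_{i\le l}(b_i-a_i)\ge b_j-a_j\ge\delta$ (since $b_i\ge a_i$ for $j<i<k$ by minimality of $k$), hence $\sum_{i\le l}a_i\le\sum_{i\le l}b'_i\le\sum_{i\le l}(b')^\downarrow_i$; one must then either re-sort $b'$ before iterating and verify that re-sorting does not increase the number of coordinates differing from $a$, or sidestep the issue by the Marshall--Olkin choice of $j$ as the \emph{largest} index with $b_j>a_j$ and $k$ the smallest index above it with $b_k<a_k$, which forces equality strictly between them. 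As written, ``the transfer keeps majorization intact'' is asserted rather than proved, and that assertion is the theorem's actual difficulty.
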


This approach inspired many important generalizations of majorization to matrices, most notably, strong majorization.

\begin{definition} \label{def:strong}
\rm Let $A, B \in M_{n, m}$. {\em Strong majorization} is defined by $A \preceq^{s} B$ if $A=DB$ for some $D \in \Omega_n$.
\end{definition}

Thus vector majorization is nothing but strong majorization for single-column matrices.

In this paper, we investigate majorization for column stochastic matrices. In particular, we study vector majorization for probability distributions.

\begin{definition}\label{def:prob}\rm
    A vector $v \in \RR^n$ is a {\em probability distribution} when $v \geq 0$ and $e^t v = 1$. That is, a probability distribution is an $n\times 1$ column stochastic matrix. \end{definition}

Our investigations are motivated by several aspects. It turns out that majorization in general can be reduced to majorization for column stochastic matrices and probability distributions in the vector case. On the other hand, majorizations of these objects are very important in applications, for instance, in Quantum Information Theory and Theory of Statistical Experiments.

Vector majorization is often used to compare vectors and probability distributions, offering insights into the concepts of order and disorder in various systems. The quantum mechanical analogue of a probability distribution is the density matrix. A density matrix $A$ is said to be more chaotic or less pure than the density matrix $B$ if the spectrum of $A$ is majorized by the spectrum of $B$. This notion is usually called Quantum Majorization or simply Matrix Majorization, depending on the context. Spectra of density matrices are exactly probability distributions.

Majorization serves as a fundamental mathematical tool for understanding quantum information processes, providing deep insights into quantum measurement, entanglement transformation, and quantum dynamics. The relation $a \preceq b$ between vectors captures the intuitive notion that $a$ is "more mixed" or "more disordered" than $b$, providing a mathematical framework more precise than entropic measures for quantifying quantum disorder. This can be demonstrated through the concept of Schur-convex functions, which are real-valued functions that preserve the majorization relation between vectors. Both Shannon entropy and von Neumann entropy are Schur-concave functions, meaning they increase as the disorder of a system increases. However, because they are Schur-concave, they do not offer any more information than what is already captured by majorization itself. There are many problems where entropy is insufficient. The tools of majorization have been successfully applied to such problems. An example of an approach combining entropies and majorization can be found in \cite{Gour}.

The wide applicability of majorization to quantum mechanics was first established through two seminal results: Horn's lemma and Uhlmann's theorem that connect vector majorization to unitary matrices. Recent applications have revealed majorization's power in characterizing post-measurement states, conditions under which one entangled state can be converted to another through local operations and classical communication (LOCC), possible probability distributions that can appear in ensemble decompositions of quantum states and so on. A more comprehensive treatment on applications of Majorization in Quantum Information can be found in \cite{Nielsen}. 

Majorization for column stochastic matrices arises in the Theory of Statistical Experiments. In the discrete settings, statistical experiments are expressed as column stochastic matrices. The matrix majorization relation introduced in \cite{Dahl99} turns out to be the criterion of one experiment being more informative than the other. Strong majorization that we focus on is a particular (stronger) case of this majorization. Further information can be found in \cite{Torgersen, Dahl99b, Dahl99,DGS1_MMC}. Note that in these references, compared to the present paper, mostly the transposed versions of the notions are used. In particular, the matrices are row stochastic.

\bigskip

We establish straightforward reductions of majorization to column stochastic matrices and probability distributions. After that we study linear operators preserving such majorizations. In the context of majorization, order-preserving functions are called Schur-convex functions, see \cite{Schur} and \cite[Chapter I.3]{MaOlAr11}. Generalizing this approach, Linear Preserver Problems investigate linear maps that preserve majorization. Such maps have been extensively studied for different types of majorization and different restrictions on the matrix (resp. vector) set, see \cite{Ando89, BeasleyLee, BeasleyLeeLee,  GS_tuples, GS2_convecters, GS3_preservers_01, HR, HR2, LiPoon2001, Shteyner_CCM, Shteyner_conv_01, Shteyner_column:01}. For a general survey on Linear Preserver Problems that date back to Frobenius, see \cite{Pierce, LiPierce}.

The first result in this theory that concerns majorization is a characterization of linear operators preserving vector majorization, obtained by Ando.

 \begin{definition}\rm
    A linear operator $\phi$ on $\RR^n$ {\em preserves vector majorization} if $a \preceq b$ implies $\phi(a) \preceq \phi(b)$ for any $a, b \in \RR^n$.
 \end{definition}
	
	\begin{theorem}\label{Ando}\cite[Corollary 2.7]{Ando89}
		Let $\Phi$ be a linear operator on $\RR^n$. Then the following conditions are equivalent:
		\begin{enumerate}
			\item $\Phi$ preserves vector majorization.
			\item One of the following holds:
			\begin{enumerate}
				\item\label{a} $\Phi (x) = (e^tx)s$ for some $s \in \RR^{n}$.
				\item\label{b} $\Phi (x) = \alpha P x + \beta J x$ for some $\alpha, \beta \in \RR$ and some $P\in P(n)$.
			\end{enumerate}
		\end{enumerate}
	\end{theorem}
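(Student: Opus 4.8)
The plan is to prove the substantive implication $(1)\Rightarrow(2)$ and dispose of $(2)\Rightarrow(1)$ using the elementary invariances of majorization. For the latter, I would first record three facts that follow directly from the definition via sorted partial sums: majorization is unchanged if either argument is permuted, if a common multiple of $e$ is added to both arguments, or if both arguments are scaled by the same real number (for a negative scalar one checks $\sum_{j\le k}(-u)^\downarrow_j=-e^tu+\sum_{j\le n-k}u^\downarrow_j$). Form~\ref{a} then preserves majorization trivially, since $a\preceq b$ forces $e^ta=e^tb$ and hence $\Phi(a)=(e^ta)s=(e^tb)s=\Phi(b)$, which majorizes itself. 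For form~\ref{b}, write $Jx=(e^tx)e$ and invoke Theorem~\ref{HLP}: if $a=Db$ with $D\in\Omega_n$, then $JD=J$, so $\Phi(a)=\alpha PDb+\beta Jb$; since $PDb=(PDP^{-1})(Pb)$ with $PDP^{-1}\in\Omega_n$ we get $PDb\preceq Pb$, hence $\alpha PDb\preceq\alpha Pb$, and adding the constant vector $\beta Jb=\beta(e^tb)e$ yields $\Phi(a)\preceq\Phi(b)$.

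For $(1)\Rightarrow(2)$ the first step is to pin down $\Phi$ on the zero-sum subspace $\0^n$. The key observation is that $0\preceq v$ for every $v\in\0^n$: the sum of the $k$ largest entries of a zero-sum vector is at least $\tfrac{k}{n}e^tv=0$, so all partial sums of $v^\downarrow$ are nonnegative while the total vanishes. Applying $\Phi$ gives $0=\Phi(0)\preceq\Phi(v)$, and $0\preceq w$ forces $e^tw=0$; hence $\Phi(\0^n)\subseteq\0^n$. Writing $\Phi_0:=\Phi|_{\0^n}$, I split into two cases. If $\Phi_0=0$, then $\Phi$ annihilates $\ker(e^t)=\0^n$, so decomposing $x=\tfrac{e^tx}{n}e+z$ with $z\in\0^n$ gives $\Phi(x)=(e^tx)\tfrac1n\Phi(e)$, which is form~\ref{a} with $s=\tfrac1n\Phi(e)$.

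The substantive case is $\Phi_0\neq 0$, where I must show $\Phi_0=\alpha P|_{\0^n}$ for some nonzero $\alpha$ and some $P\in P(n)$. Here I would use that $\Phi$ preserves majorization-equivalence: since $a\preceq b\preceq a$ holds exactly when $a$ and $b$ are permutations of one another, the same is then true of $\Phi(a)$ and $\Phi(b)$. As every ``root'' $e_i-e_j$ ($i\neq j$) is a permutation of $e_1-e_2$, all images $w_{ij}:=\Phi(e_i-e_j)$ are permutations of a single vector $w:=\Phi(e_1-e_2)\in\0^n$, and $w\neq 0$ because the roots span $\0^n$. Linearity transports the additive relations among roots to the $w_{ij}$: from $e_j-e_i=-(e_i-e_j)$ we get that $-w$ is a permutation of $w$ (so the multiset of entries of $w$ is symmetric about $0$), and from $(e_i-e_j)+(e_j-e_k)=e_i-e_k$ we get $w_{ij}+w_{jk}=w_{ik}$, i.e.\ a sum of two permutations of $w$ is again a permutation of $w$.

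The heart of the argument, and the step I expect to be the main obstacle, is to deduce from these constraints that $w$ has exactly two nonzero entries, necessarily $\alpha$ and $-\alpha$. I would combine the reflection symmetry of the entry multiset with the self-similar additive structure (whenever two roots sum to a root, the corresponding permutations of $w$ sum to a permutation of $w$) and the convex geometry of the permutohedron $\conv\{Pw:P\in P(n)\}$; this additive incidence pattern is available only when $w$ carries a single pair of opposite nonzero values, which is the delicate combinatorial core (an alternative is the representation-theoretic route, deriving an intertwining $\Phi P=\rho(P)\Phi$ and applying Schur's lemma, whose obstacle is instead making $\rho(P)$ independent of the test vector and $\Phi_0$ invertible). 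Once $w=\alpha(e_k-e_l)$, aligning the images on the basis $\{e_1-e_j\}$ via the additive relations identifies $\Phi_0=\alpha P|_{\0^n}$. Finally I reconstruct the full operator: writing $\Phi(x)=\alpha Px+(e^tx)c$ with $c=\tfrac1n(\Phi(e)-\alpha e)$, preservation of majorization forces $c$ to be a multiple of $e$, since otherwise adding the common non-constant vector $(e^tx)c$ to a majorization pair inside a fixed hyperplane $\{e^tx=n\}$ breaks the relation on an explicit pair; then $c=\beta e$ gives $\Phi(x)=\alpha Px+\beta Jx$, which is form~\ref{b}.
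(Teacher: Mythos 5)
The paper does not prove Theorem \ref{Ando}; it quotes it from \cite[Corollary 2.7]{Ando89}. The natural comparison is therefore with Section \ref{sec:0}, where the paper proves the closely related Theorem \ref{thm:0} from scratch, and your outline is structurally parallel to it: reduce to the zero-sum subspace, characterize the restriction as $\alpha P|_{\0^n}$ (resp.\ $ve^t+\lambda P$), then reconstruct the operator. Your preliminary steps are sound: the invariances used for $(2)\Rightarrow(1)$, the observation that $0\preceq v$ for $v\in\0^n$ and hence $\Phi(\0^n)\subseteq\0^n$, and the disposal of the case $\Phi|_{\0^n}=0$ are all correct.

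However, there is a genuine gap exactly where you flag ``the main obstacle'': you never actually prove that $w=\Phi(e_1-e_2)$ has exactly two nonzero entries $\pm\alpha$. You name two possible strategies (permutohedron geometry; Schur's lemma) but carry out neither, and the constraints you propose to use are too weak: you only record the relations coming from the roots $e_i-e_j$ (namely $-w\sim w$ and $Q_{ij}w+Q_{jk}w=Q_{ik}w$ for \emph{some} permutations $Q_{ij}$). The paper's proof of the analogous statement (Lemmas \ref{lem:0:alphae1-alphae2}--\ref{lem:0:>=3}) is forced to test the operator on zero-sum vectors that are \emph{not} roots: Lemma \ref{lem:0:alphae1-alphae2} produces, by a rank argument, a nonzero $v\in\0^n$ with $[\Phi]v=\alpha(e_i-e_j)$; the perturbations $v\pm\lambda(e_i-e_j)$ together with the asymptotic comparison of maxima in Lemma \ref{lem:0:alpha+-lambda_y} then bound the support of $X^{(i)}-X^{(j)}$ by three (Lemma \ref{lem:0:>=5}), and the parity observation $v\sim -v\Rightarrow$ even support (Lemma \ref{lem:0:even_number}) cuts this to two; a separate ad hoc argument handles $n=3,4$, and a further nontrivial combinatorial analysis (Lemmas \ref{lem:alpha:2distinct_in_a_row}--\ref{lem:alpha:n-1/n}, Theorem \ref{thm:alpha}) is needed to assemble the columns into $ve^t+\lambda P$. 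None of this is replaced by your sketch, so the deduction $w=\alpha(e_k-e_l)$ is asserted, not proved. A secondary, smaller gap of the same kind occurs at the end: the claim that a non-constant $c$ ``breaks the relation on an explicit pair'' needs an actual witness (compare the paper's Lemma \ref{lem:s+e_g-e_h}, which supplies one by comparing maxima). As written, the proposal is an accurate road map with its hardest segment missing.
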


We refer to operators of the form \ref{a} as Ando's operators of the first type and operators of the form \ref{b} as Ando's operators of the second type.

We can similarly define linear operators preserving majorization for probability distributions.

 \begin{definition}\rm
    A linear operator $\phi$ on $\RR^n$ {\em preserves majorization for probability distributions} if $a \preceq b$ implies $\phi(a) \preceq \phi(b)$ for any $a, b \in \1^n$.
 \end{definition}

 For the sake of brevity, we sometimes simply say that $\phi$ preserves majorization on $\1^n$. In the same way, we define linear preservers of majorization on $\0^n$, linear preservers of strong majorization and strong majorization for column stochastic matrices.

 \begin{definition}\rm
    A linear operator $\Phi$ on $M_{n, m}$ {\em preserves strong majorization (resp. preserves strong majorization on $\Omega^{col}_{n, m}$)} if $A \preceq^s B$ implies $\Phi(A) \preceq^s \Phi(B)$ for any $A, B \in M_{n, m}$ (resp. for any $A, B 
\in \Omega^{col}_{n, m}$).
 \end{definition}

Vector majorization can be reduced to majorization for probability distributions, as we show in Section \ref{subsec:Reductions:vectors}. In Theorem \ref{thm:vector} we prove that a linear map preserves vector majorization if and only if it preserves majorization for probability distributions. This is proved via $(0, 1)$-vectors and the respective linear preservers, see \cite{GS3_preservers_01}. It turns out, however, that the same approach fails in the case of matrices.

The main result of the paper is Theorem \ref{thm} that characterizes linear operators preserving strong majorization for column stochastic matrices. Despite the fact that strong majorization in general can also be reduced to column stochastic matrices, and despite the simplicity of the reduction, we obtain a richer structure of preservers.

Notably, this problem turns out to be more complicated than other similar matrix majorization preserver problems in the following sense. A matrix operator on $M_{n, m}$ can be decomposed in a standard way into $m^2$ linear operators on $\RR^n$, see Definition \ref{def:decomposition}. In the case of preservers of majorizations for real or for $(0, 1)$-matrices, the $m^2$ components of the operator leave invariant the respective majorization in the vector case. This approach already gives a strong necessary condition which goes a long way toward solving the characterization problem. 

In the case of column stochastic matrices, however, we cannot claim that the components of a preserver leave invariant majorization for $n\times 1$ column stochastic matrices, i.e. probability distributions. Indeed, as we see from Theorem \ref{thm} and Example \ref{ex:last}, this is not the case. However, we find that these operators must preserve majorization for zero-sum vectors. The concise characterization of such operators is found in Theorem \ref{thm:0}. This result provides better insight into the nature of Ando's classical operators, showing that they emerge as two particular cases of this natural and simple general structure.

The rest of the paper is organized as follows. Section \ref{sec:Basic} contains several additional notions of majorization and preliminary results. In Section \ref{sec:Reduction} we show how majorization can be reduced to column stochastic matrices.  Section \ref{sec:1} provides a characterization of linear preservers of majorization for probability distributions. Section \ref{sec:0} solves the prerequisite problem of characterizing linear preservers of majorization for zero-sum vectors. The characterization of linear preservers of strong majorization for column stochastic matrices is the main result of the paper that can be found in Section \ref{sec:Result}.

 \section{Preliminaries}\label{sec:Basic}
    
In this paper, we demonstrate that strong majorization can be reduced to the case of column stochastic matrices. Since the same approach works for directional majorization and, to a slightly lesser extent, for weak majorization, we also investigate these related notions.

\begin{definition} \label{def:dir:weak} \rm
Let $A, B \in M_{n, m}$. 

\begin{itemize}
    \item {\em Directional majorization} is defined by $A \preceq^{d} B$ if $Av \preceq Bv$ for any $v \in \RR^m$.
    \item {\em Weak majorization} is defined by $A \preceq^{w} B$ if $A=RB$ for some $R \in \Omega^{row}_n$.
\end{itemize}
\end{definition}

Strong majorization implies directional majorization, and directional majorization implies weak majorization. None of the reverse implications is true in general. A necessary condition for directional majorization (and thus also for strong majorization) is the equality of column sums.

\begin{lemma}
    Let $A, B \in M_{n, m}$. If $A \preceq^d B$, then $e^tA = e^tB$.
\end{lemma}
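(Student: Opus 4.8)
The plan is to extract the equal-total-sum condition that is built into the definition of vector majorization and then exploit the fact that directional majorization quantifies over all $v$. First I would record the elementary observation that $a \preceq b$ forces $e^t a = e^t b$ for any $a, b \in \RR^n$: this is exactly the terminal equality $\sum_{j=1}^n a^\downarrow_{j} = \sum_{j=1}^n b^\downarrow_{j}$ in the definition of $\preceq$, together with the fact that the full sum of the entries is invariant under the reordering $x \mapsto x^\downarrow$ (a permutation of entries leaves $e^t x$ unchanged). Thus majorization preserves the total sum.

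Next, I would apply this to the columns-times-vector images. Since $A \preceq^d B$ means, by Definition \ref{def:dir:weak}, that $Av \preceq Bv$ for every $v \in \RR^m$, the previous observation yields $e^t(Av) = e^t(Bv)$, i.e. $(e^t A)\,v = (e^t B)\,v$ for all $v \in \RR^m$. Hence the two row vectors $e^t A$ and $e^t B$ induce the same linear functional on $\RR^m$. Evaluating at $v = e_j$ for each $j \in \M$ recovers the equality of the $j$-th column sums of $A$ and $B$, and therefore $e^t A = e^t B$, as claimed.

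I do not expect a genuine obstacle in this argument: its entire content is the recognition that the normalization clause of the majorization definition encodes preservation of the total sum, after which the conclusion is forced by testing against the standard basis $\{e_j\}_{j \in \M}$. The parenthetical assertion that the same equality is necessary for strong majorization is then immediate, since $A \preceq^s B$ implies $A \preceq^d B$, as noted just before the statement.
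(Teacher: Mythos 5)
Your proof is correct and is exactly the argument the paper intends: the lemma is stated without proof precisely because it follows from the sum-equality clause in the definition of $\preceq$ applied to $Av \preceq Bv$ and then testing against the standard basis vectors $e_j$. Nothing to add.
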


Multiplication by an invertible matrix on the right does not affect these majorizations.

\begin{lemma}\label{lem:right.inv}\cite[Corollary 2.13]{GS2_convecters}
	Let $A, B \in M_{n, m}$. Then for any invertible $Y \in M_m$ the following holds:
	\begin{enumerate}
		\item $A \preceq^w B$ if and only if $AY \preceq^w BY$.
		\item $A \preceq^d B$ if and only if $AY \preceq^d BY$.
		\item $A \preceq^s B$ if and only if $AY \preceq^s BY$.
	\end{enumerate}
\end{lemma}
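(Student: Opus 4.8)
The plan is to observe that all three equivalences follow from the compatibility of right multiplication by $Y$ with the defining mechanisms of the respective majorizations, together with the fact that invertibility of $Y$ lets us pass freely in both directions.

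For parts (1) and (3) I would argue directly through the defining factorizations. If $A \preceq^w B$, say $A = RB$ with $R \in \Omega_n^{row}$, then multiplying on the right by $Y$ gives $AY = R(BY)$ with the \emph{same} $R$, so $AY \preceq^w BY$. Conversely, if $AY \preceq^w BY$, say $AY = R(BY)$, then right multiplication by $Y^{-1}$ yields $A = AYY^{-1} = RBYY^{-1} = RB$, whence $A \preceq^w B$. The identical argument with a doubly stochastic $D \in \Omega_n$ in place of $R$ establishes part (3). The only role of invertibility here is to cancel $Y$ in the converse direction; the key structural fact is that the left factor ($R$ or $D$) and the right factor $Y$ act on opposite sides and therefore never interfere.

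For part (2) I would use that an invertible $Y$ induces a bijection of $\RR^m$ onto itself. Suppose $A \preceq^d B$, i.e. $Au \preceq Bu$ for every $u \in \RR^m$. Given any $w \in \RR^m$, put $u = Yw$; then $(AY)w = A(Yw) = Au \preceq Bu = B(Yw) = (BY)w$, so $AY \preceq^d BY$. For the converse, given any $u \in \RR^m$, set $w = Y^{-1}u$; then $Au = A(Yw) = (AY)w \preceq (BY)w = B(Yw) = Bu$, so $A \preceq^d B$. Here invertibility is essential in \emph{both} directions: it guarantees that as $w$ ranges over $\RR^m$ the vector $Yw$ also ranges over all of $\RR^m$, so the family of vector-majorization inequalities defining directional majorization is preserved exactly.

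There is no genuine obstacle in this lemma; its entire content is that right multiplication by an invertible matrix is a symmetry of each of the three relations. The one point worth flagging is that invertibility cannot be dropped: for a singular $Y$ the map $w \mapsto Yw$ no longer surjects onto $\RR^m$, which would break the converse in (2) and the cancellation of $Y$ in (1) and (3).
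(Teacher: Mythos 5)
Your proof is correct and is exactly the standard argument; the paper does not reprove this lemma but imports it from \cite[Corollary 2.13]{GS2_convecters}, and your factorization/cancellation argument for (1) and (3) and the substitution $u = Yw$ for (2) is the expected route. One small quibble: in part (2) invertibility is needed only for the converse direction (the forward implication $A \preceq^d B \Rightarrow AY \preceq^d BY$ holds for arbitrary $Y$, since $Yw$ is just some vector in $\RR^m$), so your remark that it is ``essential in both directions'' overstates its role there.
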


As a direct consequence, we obtain the following property.

\begin{corollary}\label{cor:AD<BD}
    Let $A, B \in M_{n, m}$. Let $D \in M_m$ be a nonsingular diagonal matrix. Then $A \preceq^s B$ if and only if $AD \preceq^s BD$. The same property is true for directional and weak majorizations.
\end{corollary}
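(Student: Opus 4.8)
The plan is to derive Corollary~\ref{cor:AD<BD} as an immediate special case of Lemma~\ref{lem:right.inv}. The key observation is that a nonsingular diagonal matrix $D \in M_m$ is in particular an invertible matrix, so each of the three equivalences in Lemma~\ref{lem:right.inv} applies directly with the choice $Y = D$. Concretely, for the strong majorization case I would take $Y := D$ in part~(3) of the lemma; since $D$ is assumed nonsingular, it is a valid invertible $Y \in M_m$, and the lemma yields $A \preceq^s B$ if and only if $AD \preceq^s BD$. The identical argument with part~(1) handles weak majorization and with part~(2) handles directional majorization.

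Thus the entire proof reduces to the remark that ``nonsingular diagonal'' is a strictly stronger hypothesis than ``invertible,'' so no new work is required beyond instantiating the more general statement. First I would state that $D$, being a nonsingular diagonal matrix, is invertible in $M_m$. Then I would simply invoke Lemma~\ref{lem:right.inv} three times, once for each of the three majorization types, to conclude.

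Since this is a direct corollary, there is essentially no obstacle to overcome: the content is entirely contained in the cited lemma, and the corollary serves mainly to record the diagonal special case for convenient later reference (presumably in the context of Corollary~\ref{cor:AD<BD}'s application, where one rescales columns of a column stochastic matrix). The only thing worth being careful about is that nonsingularity of $D$ is genuinely needed, since a diagonal matrix with a zero on its diagonal would fail to be invertible and the equivalence could break down; this is why the hypothesis explicitly demands $D$ be nonsingular.
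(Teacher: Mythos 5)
Your proposal is correct and matches the paper's approach exactly: the paper presents this corollary as a direct consequence of Lemma~\ref{lem:right.inv}, obtained by taking $Y = D$ and noting that a nonsingular diagonal matrix is invertible. Nothing further is required.
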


For our convenience, we introduce the following  equivalence relations induced by majorization.

	\begin{definition}
        \rm \
        
		\begin{itemize}
			\item For $a, b \in \RR^n$ we write  $a\sim b$ if $a \preceq b \preceq a$;
			
			\item  For $A, B \in M_{n, m}$ we write $A \sim^s B$ if $A \preceq^s B \preceq^s A$.
		\end{itemize}
	\end{definition}

    Both relations turn out to be nothing but equality up to a permutation.
	
	\begin{lemma}\label{lem:vector:symmetry}
		Let $a, b \in \RR^n$. Then $a \sim b$ if and only if $a=Pb$ for some $P \in P(n)$.
	\end{lemma}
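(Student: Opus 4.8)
The plan is to prove both implications directly from the definition of majorization, with the whole argument resting on the observation that $a \sim b$ forces all partial sums of the ordered vectors to coincide. I would treat the two directions separately, handling the trivial one first and then deriving the entrywise equality $a^\downarrow = b^\downarrow$ in the substantive direction.

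For the easy direction, suppose $a = Pb$ for some $P \in P(n)$. Then $a$ and $b$ share the same multiset of entries, so $a^\downarrow = b^\downarrow$. Consequently every partial sum $\sum_{j=1}^k a^\downarrow_j$ equals $\sum_{j=1}^k b^\downarrow_j$, and reading the defining (in)equalities in both directions yields $a \preceq b$ and $b \preceq a$, i.e. $a \sim b$.

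For the converse, I would assume $a \sim b$, that is $a \preceq b$ and $b \preceq a$, and write $S_k^a = \sum_{j=1}^k a^\downarrow_j$ and $S_k^b = \sum_{j=1}^k b^\downarrow_j$. From $a \preceq b$ one gets $S_k^a \le S_k^b$ for $k = 1, \ldots, n-1$, and from $b \preceq a$ one gets $S_k^b \le S_k^a$ on the same range; together with the equality $S_n^a = S_n^b$ built into either majorization, this forces $S_k^a = S_k^b$ for all $k = 1, \ldots, n$. Taking successive differences gives $a^\downarrow_k = S_k^a - S_{k-1}^a = S_k^b - S_{k-1}^b = b^\downarrow_k$ for every $k$, hence $a^\downarrow = b^\downarrow$. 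To convert this into the desired permutation relation I would pick $Q, R \in P(n)$ with $Qa = a^\downarrow$ and $Rb = b^\downarrow$; then $Qa = Rb$, so $P := Q^{-1}R \in P(n)$ satisfies $a = Pb$.

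I do not expect a genuine obstacle here, as the lemma is elementary. The only point requiring care is that majorization as defined demands equality of the full sums only at $k = n$, so the upgrade of the intermediate inequalities to \emph{equalities} must be driven by having both relations $a \preceq b$ and $b \preceq a$ available simultaneously; this is the crux of the argument. Repeated entries cause no difficulty, since any sorting permutation may be used in the final step.
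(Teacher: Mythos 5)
Your proof is correct. The paper does not prove this lemma directly: it simply observes that it is the special case $m=1$ of the more general matrix statement (Lemma \ref{lem:sim:strong}, quoted from the literature), which characterizes $A \sim^s B$ as $A = PB$. Your argument is instead a self-contained, elementary one: you use both relations $a \preceq b$ and $b \preceq a$ to upgrade the partial-sum inequalities to equalities, take successive differences to get $a^\downarrow = b^\downarrow$, and then compose sorting permutations to produce $P$. This buys independence from the external reference and makes explicit exactly where both directions of the majorization are used; what the paper's route buys is brevity and a single source of truth, since the matrix version is needed elsewhere anyway (e.g.\ in Theorem \ref{thm:01} and throughout Section \ref{sec:Result}). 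Both are sound; yours is the standard textbook derivation and has no gaps.
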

	
	Lemma \ref{lem:vector:symmetry} is a particular case of the following more general statement.
	
	\begin{lemma}\cite[Theorem 3.24]{PeMaSi05}\label{lem:sim:strong}
		Let $A, B \in M_{n, m}$. Then $A \sim^s B$ if and only if
			there exists $P \in P(n)$ such that $A = PB$.
	\end{lemma}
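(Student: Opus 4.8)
The plan is to prove the two directions separately, with the reverse (``only if'') implication being the substantive one. For the ``if'' direction I would argue as follows: if $A = PB$ with $P \in P(n)$, then since every permutation matrix is doubly stochastic, the equality $A = PB$ already witnesses $A \preceq^s B$; moreover $B = P^{t} A$ with $P^{t} \in P(n) \subseteq \Omega_n$, so $B \preceq^s A$ as well, and hence $A \sim^s B$.

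For the ``only if'' direction, assume $A \sim^s B$, so that $A = D_1 B$ and $B = D_2 A$ for some $D_1, D_2 \in \Omega_n$. Reading these equalities column by column, $A^{(j)} = D_1 B^{(j)}$ and $B^{(j)} = D_2 A^{(j)}$ for each $j \in \M$, so by Theorem \ref{HLP} we obtain $A^{(j)} \preceq B^{(j)} \preceq A^{(j)}$, i.e. $A^{(j)} \sim B^{(j)}$ as vectors. Lemma \ref{lem:vector:symmetry} then says that each $A^{(j)}$ is a permutation of $B^{(j)}$; in particular $\|A^{(j)}\|_2 = \|B^{(j)}\|_2$. The whole difficulty is that the permutation relating $A^{(j)}$ to $B^{(j)}$ may a priori depend on $j$, whereas the conclusion demands a \emph{single} $P$ that works for every column simultaneously.

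To overcome this, I would exploit that the same matrix $D_1$ acts on all columns, combined with the strict convexity of the Euclidean norm. Applying the Birkhoff--von Neumann Theorem \ref{Birkhoff}, write $D_1 = \sum_{k=1}^{r} \lambda_k P_k$ with distinct $P_k \in P(n)$, $\lambda_k > 0$ and $\sum_k \lambda_k = 1$. Fixing a column index $j$ and using that each $P_k$ preserves the Euclidean norm, the triangle inequality gives
\[
\|A^{(j)}\|_2 = \Bigl\| \sum_{k=1}^{r} \lambda_k P_k B^{(j)} \Bigr\|_2 \le \sum_{k=1}^{r} \lambda_k \|P_k B^{(j)}\|_2 = \|B^{(j)}\|_2 = \|A^{(j)}\|_2,
\]
so equality holds throughout. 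Since the Euclidean norm is strictly convex and all $\lambda_k$ are positive, equality in the triangle inequality forces the vectors $P_1 B^{(j)}, \ldots, P_r B^{(j)}$ to be positive multiples of one common vector; as they share the norm $\|B^{(j)}\|_2$, they must in fact coincide (the case $B^{(j)} = 0$ being trivial). Consequently $A^{(j)} = \sum_k \lambda_k P_k B^{(j)} = P_1 B^{(j)}$.

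Because $P_1$ is independent of $j$, this identity holds for every column at once, i.e. $A = P_1 B$, finishing the proof with $P = P_1$. The crux of the argument, and the step I expect to be the main obstacle, is exactly this passage from column-wise permutation equivalence to a single global permutation: the equality case of the triangle inequality for a strictly convex norm is precisely what upgrades the ``same norm in each column'' information into ``same permutation for all columns.''
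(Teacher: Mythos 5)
Your argument is correct, but note that the paper does not prove this statement at all: it is imported verbatim as \cite[Theorem~3.24]{PeMaSi05}, so there is no in-paper proof to compare against. What you give is a clean, self-contained derivation, and the key step --- writing $D_1=\sum_k\lambda_kP_k$ via Theorem~\ref{Birkhoff} and using equality in the triangle inequality for the strictly convex Euclidean norm to force all $P_kB^{(j)}$ (points on a common sphere lying on a common ray) to coincide, with the single $P_1$ then serving every column simultaneously --- is exactly the right mechanism for upgrading column-wise permutation equivalence to one global permutation. One small remark: you invoke Lemma~\ref{lem:vector:symmetry} to get $\|A^{(j)}\|_2=\|B^{(j)}\|_2$, but the paper presents that lemma as a \emph{special case} of the very statement you are proving, so leaning on it has at least the appearance of circularity. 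It is easily avoided: your own inequality applied to $A=D_1B$ gives $\|A^{(j)}\|_2\le\|B^{(j)}\|_2$, and applied to $B=D_2A$ gives the reverse, so the norm equality (and hence the equality case you need) follows with no external input; your proof then also yields Lemma~\ref{lem:vector:symmetry} as the $m=1$ instance, matching the paper's logical ordering.
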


For more information on strong, directional and weak majorization, see \cite{PeMaSi05, MaOlAr11, DGS1_MMC, DGS2_01, DGS3_0+-1} and the references therein.

Majorizations for $(0, 1)$-matrices and vectors were investigated in \cite{DGS2_01}. It turns out that strong majorization on $M_{n, m}(0, 1)$ is precisely the equivalence relation $\sim^s$.
	
	\begin{theorem}\label{thm:01}\cite[Theorem 3.5 and Corollary 3.6]{DGS2_01}
		Let $A, B\in M_{n, m}(0, 1)$. Then
		$A\preceq^{s} B$ if and only if $A\sim^sB$.
	\end{theorem}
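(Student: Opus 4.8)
The plan is to prove the nontrivial implication, since the reverse is immediate: if $A \sim^s B$ then in particular $A \preceq^s B$ by definition of $\sim^s$. So I would assume $A \preceq^s B$, i.e. $A = DB$ for some $D \in \Omega_n$, and aim to produce a permutation $P \in P(n)$ with $A = PB$; the conclusion $A \sim^s B$ then follows from Lemma~\ref{lem:sim:strong}.

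The first step is to exploit that all entries of $A$ and $B$ lie in $\{0,1\}$ in order to rigidify the support of $D$. Reading $A = DB$ row-wise gives $A_{(i)} = \sum_{k} d_{ik} B_{(k)}$, which is a convex combination of the rows of $B$ because $D$ is doubly stochastic. Fixing a column $j$, the scalar equation $a_{ij} = \sum_k d_{ik} b_{kj}$ together with $\sum_k d_{ik} = 1$ and nonnegativity of the $d_{ik}$ forces, in both the cases $a_{ij} = 0$ and $a_{ij} = 1$, that $d_{ik} > 0$ implies $b_{kj} = a_{ij}$. (Equivalently: $A_{(i)}$ is a vertex of the cube $[0,1]^m$, so any convex representation of it using the points $B_{(k)} \in \{0,1\}^m$ can only put positive weight on points equal to $A_{(i)}$.) Ranging over all $j$, I conclude that $d_{ik} > 0$ implies $B_{(k)} = A_{(i)}$; in words, the support of $D$ only links a row of $A$ to rows of $B$ identical to it.

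The second step extracts a permutation compatible with this support. By the Birkhoff--von Neumann Theorem (Theorem~\ref{Birkhoff}) we may write $D = \sum_\ell \lambda_\ell P_\ell$ with $\lambda_\ell > 0$ and $P_\ell \in P(n)$, and since the weights are positive and all entries nonnegative, the support of each $P_\ell$ is contained in the support of $D$. I would pick any such $P = P_\ell$, say with $P_{i,\sigma(i)} = 1$. Then $d_{i,\sigma(i)} > 0$ for every $i$, so the first step gives $B_{(\sigma(i))} = A_{(i)}$ for all $i$, which is exactly $A = PB$. Hence $A \sim^s B$ by Lemma~\ref{lem:sim:strong}.

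I would expect the only genuinely delicate point to be the passage from the doubly stochastic $D$ to an honest permutation while respecting the support constraint derived in the first step; this is precisely what Birkhoff's theorem supplies. A Hall's-theorem / system-of-distinct-representatives argument on the bipartite support graph of $D$ would be a self-contained alternative, but invoking Theorem~\ref{Birkhoff} is cleaner. Everything else is elementary once the $\{0,1\}$ structure has been used to pin down the support.
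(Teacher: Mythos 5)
Your argument is correct. Note first that the paper itself gives no proof of this statement: it is imported verbatim from the cited reference \cite[Theorem 3.5 and Corollary 3.6]{DGS2_01}, so there is nothing internal to compare against. Judged on its own, your proof is sound and self-contained. The coordinate computation in the first step is airtight: from $a_{ij}=\sum_k d_{ik}b_{kj}$ with $\sum_k d_{ik}=1$ and $d_{ik}\ge 0$, the case $a_{ij}=0$ forces every term to vanish, and the case $a_{ij}=1$ gives $\sum_k d_{ik}(1-b_{kj})=0$, so in both cases $d_{ik}>0$ implies $b_{kj}=a_{ij}$; ranging over $j$ yields $B_{(k)}=A_{(i)}$ whenever $d_{ik}>0$. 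The second step is also fine: in the Birkhoff decomposition $D=\sum_\ell \lambda_\ell P_\ell$ with $\lambda_\ell>0$, any entry where some $P_\ell$ equals $1$ gives $d_{ik}\ge\lambda_\ell>0$, so the support of each $P_\ell$ sits inside that of $D$, and any one of these permutations $P$ satisfies $A=PB$, whence $A\sim^s B$ by Lemma~\ref{lem:sim:strong}. This extreme-point-of-the-cube argument is the standard route to this fact, and your remark that a Hall's theorem argument on the support graph would serve as a Birkhoff-free alternative is accurate.
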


For vector majorization, this takes the following form.

 \begin{corollary}\label{cor:01:vector}
     Let $a, b \in \{0, 1\}^n$. Then $a \preceq b$ if and only if $e^ta = e^t b$.
 \end{corollary}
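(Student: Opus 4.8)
The plan is to obtain this corollary as the single-column instance of Theorem \ref{thm:01}. Recall that vector majorization $a \preceq b$ is by definition strong majorization $a \preceq^s b$ once $a$ and $b$ are viewed as $n \times 1$ matrices. Since $a, b \in \{0,1\}^n$ means precisely that these single-column matrices lie in $M_{n,1}(0,1)$, Theorem \ref{thm:01} applies and tells us that $a \preceq b$ holds if and only if $a \sim^s b$. By Lemma \ref{lem:sim:strong} (or equivalently Lemma \ref{lem:vector:symmetry}), the latter is equivalent to the existence of $P \in P(n)$ with $a = Pb$. So the whole task reduces to showing that, for $(0,1)$-vectors, being a permutation of one another is captured exactly by equality of coordinate sums.

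For the forward implication I would argue directly from the definition of majorization rather than from the theorem: the final condition $\sum_{j=1}^n a^\downarrow_j = \sum_{j=1}^n b^\downarrow_j$ in the definition of $a \preceq b$ is nothing but $e^t a = e^t b$, and this requires no hypothesis on the entries. (Alternatively, from $a = Pb$ one gets $e^t a = e^t P b = e^t b$, using $e^t P = e^t$ for any permutation matrix.) For the converse, suppose $e^t a = e^t b$. Because every entry of $a$ and of $b$ is either $0$ or $1$, the sum $e^t a$ simply counts the number of ones in $a$, and likewise for $b$; hence $a$ and $b$ have the same number of ones. A permutation sending the support of $b$ onto the support of $a$ then yields $a = Pb$, so $a \sim b$ and therefore $a \preceq b$.

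There is no genuine obstacle here; the statement is essentially a bookkeeping consequence of Theorem \ref{thm:01}. The only point deserving attention is to notice where the $(0,1)$ hypothesis is actually used: the forward direction is merely the sum-preservation built into the definition of $\preceq$ and holds for arbitrary real vectors, whereas the content of the corollary lies in the converse, where the restriction to entries in $\{0,1\}$ is exactly what forces equal sums to imply equal multisets of entries.
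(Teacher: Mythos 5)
Your proof is correct and follows essentially the route the paper intends: the corollary is presented as the single-column specialization of Theorem \ref{thm:01}, with the observation that for $(0,1)$-vectors equality of coordinate sums is equivalent to being permutations of one another. Your remark that the forward direction is already immediate from the definition of $\preceq$ (so the $(0,1)$ hypothesis is only needed for the converse) is accurate and a fair clarification of where the content lies.
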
	

Finally, we provide the following characterization of linear operators preserving strong majorization. It was obtained by Li and Poon in \cite{LiPoon2001}, see also \cite{BeasleyLee, GS2_convecters}.
	
	\begin{theorem}\cite[Theorem 2]{LiPoon2001}\label{thm:LiPoon}
		Let $\Phi$ be a linear operator on $M_{n, m}$. The following conditions are equivalent:
		\begin{enumerate}
			\item $\Phi$ preserves strong majorization.
			\item One of the following holds:
			\begin{enumerate}
				\item There exist $S_1, \ldots, S_m \in M_{n, m}$ such that $\Phi(X)=\sum\limits^{m}_{j=1} (e^t X^{(j)})S_j$.
				\item There exist $R, S \in M_m$ and $P \in P(n)$ such that $\Phi(X) = PXR + JXS$.
			\end{enumerate}
		\end{enumerate}
	\end{theorem}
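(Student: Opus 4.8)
The plan is to prove the substantive implication $(1)\Rightarrow(2)$ by splitting $\Phi$ into its $m^2$ scalar components and pulling Ando's theorem (Theorem \ref{Ando}) back from $\RR^n$ to $M_{n,m}$; the reverse implication $(2)\Rightarrow(1)$ is a direct verification that I would dispose of first. For $(2)\Rightarrow(1)$, write $A=DB$ with $D\in\Omega_n$ and use that $JD=DJ=J$ (column and row sums of $D$ equal $1$) and that multiplying $J$ by a permutation matrix on either side leaves it unchanged. For an operator of type (a) the preserved column sums $e^tA^{(j)}=e^tDB^{(j)}=e^tB^{(j)}$ give $\Phi(A)=\Phi(B)$, hence trivially $\Phi(A)\preceq^s\Phi(B)$; for an operator of type (b) a one-line computation gives $\Phi(A)=PDBR+JBS=(PDP^t)\Phi(B)$ with $PDP^t\in\Omega_n$, so again $\Phi(A)\preceq^s\Phi(B)$.

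For $(1)\Rightarrow(2)$ I would first record the componentwise decomposition $\Phi(X)^{(i)}=\sum_{j}\Phi_{ij}(X^{(j)})$, where each $\Phi_{ij}$ is a linear operator on $\RR^n$. The key device is to test $\Phi$ on single-column matrices $\iota_j(x):=xe_j^t$: since $a=Db$ forces $\iota_j(a)=D\,\iota_j(b)$, the hypothesis supplies a \emph{single} $D'\in\Omega_n$ with $\Phi_{ij}(a)=D'\Phi_{ij}(b)$ simultaneously for all output indices $i$. In particular each $\Phi_{ij}$ preserves vector majorization, so Ando's theorem applies: every $\Phi_{ij}$ is either of the first type, $\Phi_{ij}(x)=(e^tx)s_{ij}$, or of the second type, $\Phi_{ij}(x)=\alpha_{ij}P_{ij}x+\beta_{ij}Jx$, and for $n\ge2$ a second-type component with $\alpha_{ij}\ne0$ is never of the first type. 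Comparing with the two target forms yields the translation I am after: $\Phi$ has form (a) exactly when every $\Phi_{ij}$ is of the first type (then $s_{ij}=S_j^{(i)}$), and $\Phi$ has form (b) exactly when every $\Phi_{ij}$ is of the second type for one common permutation $P$ (then $\alpha_{ij}=R_{ji}$, $\beta_{ij}=S_{ji}$, with first-type components absorbed as the degenerate case $\alpha_{ij}=0$, $s_{ij}$ a scalar multiple of $e$).

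The heart of the argument, and the step I expect to be the main obstacle, is therefore the dichotomy: either all components are of the first type, or all are of the second type with a single shared permutation. To force a common permutation I would feed $\Phi$ permutation inputs, taking $D=\tau\in P(n)$; then $\iota_j(a)\sim^s\iota_j(b)$, and Lemma \ref{lem:sim:strong} forces the accompanying $D'$ to be a permutation $\pi$. Writing $\Phi_{ij}(\tau b)=\pi\,\Phi_{ij}(b)$ for two second-type components in the same input column (for $b$ with distinct entries, so that $\pi$ is uniquely pinned down) gives $\pi=Q_1\tau Q_1^{-1}=Q_2\tau Q_2^{-1}$ for every $\tau$, whence $Q_2^{-1}Q_1$ lies in the center of the symmetric group; for $n\ge3$ that center is trivial and the two permutations coincide. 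The same permutation inputs applied to a first-type component give $\pi s_{ij}=s_{ij}$, and once $\pi=Q\tau Q^{-1}$ ranges over all permutations this forces $s_{ij}$ to be fixed by every permutation, i.e.\ a multiple of $e$ — ruling out a genuine first-type component alongside a second-type one in the same input column.

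The remaining difficulty is that single-column tests only couple components sharing an input column, whereas form (b) demands one permutation across \emph{all} columns. To link distinct input columns $j_1\ne j_2$ I would use two-column inputs $B=b_1e_{j_1}^t+b_2e_{j_2}^t$ and again exploit that strong majorization delivers one doubly stochastic $D'$ serving every output column at once; this single shared $D'$ — precisely the feature that distinguishes strong majorization from columnwise vector majorization — is what propagates the common permutation and the ``$s_{ij}$ a multiple of $e$'' constraint from column to column. Assembling the resulting scalars into $R$ and $S$ via $R_{ji}=\alpha_{ij}$, $S_{ji}=\beta_{ij}$ then realizes form (b), while the complementary case (no second-type component anywhere) assembles directly into form (a) with $S_j^{(i)}=s_{ij}$. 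Finally I would treat $n=1$ as trivial and $n=2$ by hand, since there the center of the symmetric group is the whole group and the common-permutation step must be argued separately.
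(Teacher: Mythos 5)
The paper does not prove this statement: Theorem~\ref{thm:LiPoon} is imported verbatim from Li and Poon \cite[Theorem 2]{LiPoon2001} and used as a black box (in the proof of Theorem~\ref{thm}), so there is no in-paper argument to compare yours against. Judged on its own, your easy direction $(2)\Rightarrow(1)$ is correct and complete: the identities $e^tD=e^t$, $JD=DJ=J$ and $P^tJ=PJ=J$ do give $\Phi(A)=\Phi(B)$ in case (a) and $\Phi(A)=(PDP^t)\Phi(B)$ with $PDP^t\in\Omega_n$ in case (b). Your hard direction follows the strategy that is standard for this family of results and that the present paper itself emulates in Section~\ref{sec:Result}: decompose $\Phi$ into the $m^2$ components $\Phi^j_i$ of Definition~\ref{def:decomposition}, observe that single-column inputs $xe_j^t$ make each component a preserver of vector majorization so that Theorem~\ref{Ando} applies, and then force a single permutation by exploiting that strong majorization supplies \emph{one} doubly stochastic (resp.\ permutation) matrix serving all output columns at once. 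The conjugation argument $\pi=Q_1\tau Q_1^{-1}=Q_2\tau Q_2^{-1}$ and the triviality of the centre of $S_n$ for $n\ge 3$ is a legitimate way to identify permutations within one input column, and the observation that $\pi s_{ij}=s_{ij}$ for all $\pi$ forces $s_{ij}\in\RR e$ correctly absorbs first-type components.

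The place where your sketch stops short of a proof is exactly where you say you expect the obstacle: the passage from one input column to all of them, and the mixed case. With a two-column input $b_1e_{j_1}^t+b_2e_{j_2}^t$ the single shared permutation $\pi$ acts on the \emph{sums} $\Phi^{j_1}_i(b_1)+\Phi^{j_2}_i(b_2)$, so extracting $\pi=Q_1\tau Q_1^{-1}$ and $\pi=Q_2\tau Q_2^{-1}$ simultaneously requires an extra step (e.g.\ a genericity/perturbation argument letting $b_2\to 0$ and using finiteness of $P(n)$, or a dominance argument as in Lemma~\ref{lem:1a:saves0} of this paper); as written it is an assertion. Worse, in the mixed case where every component in input column $j_1$ is of the first type while column $j_2$ contains a second-type component, your method for pinning $\pi$ down to $Q_2\tau Q_2^{-1}$ reads off the output column through terms $(e^tb_1)s''+\alpha Q_2\tau b_2+\dots$ whose first summand is not yet known to be a multiple of $e$ --- so the identification of $\pi$ and the conclusion $s''\in\RR e$ each presuppose the other. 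Breaking this circularity (for instance by first proving an analogue of Lemma~\ref{lem:s+e_g-e_h}, which is precisely the tool this paper develops for its own, harder, column-stochastic version) is the missing content; everything else in your outline is sound.
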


\section{Reduction to column stochastic matrices}\label{sec:Reduction}

In this section, we establish that when dealing with strong or directional majorization, we can always reduce the problem to column stochastic matrices. The same reduction applies to vector majorization. When dealing with weak majorization, we can make either matrix in the majorization relation column stochastic. Additionally, we specifically examine the role of $(0, 1)$-matrices.

\subsection{Strong majorization}

We begin by establishing several key properties that enable reductions to column stochastic matrices. 

\begin{lemma}\label{lem:lambda}
    Let $A, B \in M_{n, m}$ and $\lambda \in \RR$, $\lambda \neq 0$. Then  $A \preceq^s B$ if and only if $\lambda A \preceq^s \lambda B$.
\end{lemma}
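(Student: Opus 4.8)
The plan is to argue directly from the definition of strong majorization given in Definition \ref{def:strong}, namely that $A \preceq^s B$ means $A = DB$ for some doubly stochastic $D \in \Omega_n$. The key observation is that scalar multiplication commutes with the left-action of $D$, so the very same witnessing matrix $D$ serves for both directions.

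Concretely, first I would assume $A \preceq^s B$ and pick $D \in \Omega_n$ with $A = DB$. Multiplying by the scalar $\lambda$ gives $\lambda A = \lambda(DB) = D(\lambda B)$, and since $D$ is still doubly stochastic, this exhibits $\lambda A \preceq^s \lambda B$ with the same $D$. For the converse, since $\lambda \neq 0$ the scalar $1/\lambda$ is well defined, so applying the forward implication to the pair $\lambda A, \lambda B$ with scaling factor $1/\lambda$ yields $A = \tfrac{1}{\lambda}(\lambda A) \preceq^s \tfrac{1}{\lambda}(\lambda B) = B$. This symmetric use of $\lambda$ and $1/\lambda$ is precisely where the hypothesis $\lambda \neq 0$ is needed.

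Alternatively, I would point out that the statement is in fact a special case of Corollary \ref{cor:AD<BD}: taking the nonsingular diagonal matrix $D = \lambda I_m \in M_m$ (nonsingular exactly because $\lambda \neq 0$), we have $\lambda A = A(\lambda I_m)$ and $\lambda B = B(\lambda I_m)$, so right-multiplication invariance gives $A \preceq^s B \iff \lambda A \preceq^s \lambda B$ immediately. Either route is essentially immediate, so I do not anticipate any genuine obstacle; the only point requiring (minimal) care is the role of $\lambda \neq 0$ in guaranteeing the backward implication.
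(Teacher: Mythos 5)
Your proof is correct; the paper actually states Lemma \ref{lem:lambda} without proof, treating it as immediate, and your first argument (the same $D \in \Omega_n$ witnesses both $A = DB$ and $\lambda A = D(\lambda B)$, with the converse obtained by rescaling by $1/\lambda$) is exactly the intended observation. Your alternative derivation from Corollary \ref{cor:AD<BD} with the diagonal matrix $\lambda I$ is also valid and non-circular, since that corollary is established in the preliminaries before this lemma.
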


\begin{lemma}\label{lem:ev^t}
    Let $A, B \in M_{n, m}$ and $v \in \RR^m$. Then $A \preceq^s B$ if and only if $A + ev^t \preceq^s B + ev^t$.
\end{lemma}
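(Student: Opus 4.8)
The plan is to unwind the definition of strong majorization and exploit the fact that every doubly stochastic matrix fixes the all-ones vector. Recall that $A \preceq^s B$ means $A = DB$ for some $D \in \Omega_n$, and that any $D \in \Omega_n$ satisfies $De = e$ because its row sums equal $1$. The perturbation $ev^t$ is the rank-one matrix all of whose rows equal $v^t$, so the crucial identity is
$$D(ev^t) = (De)v^t = ev^t,$$
valid for every $D \in \Omega_n$.

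First I would prove the forward implication. Assuming $A = DB$ with $D \in \Omega_n$, I would compute
$$D(B + ev^t) = DB + D(ev^t) = A + ev^t,$$
using the identity above. Since the same $D$ is doubly stochastic, this exhibits $A + ev^t = D(B + ev^t)$, and hence $A + ev^t \preceq^s B + ev^t$.

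For the converse I would avoid a separate computation and instead invoke the forward implication with $v$ replaced by $-v$. Applying the already-proved direction to the pair $A + ev^t$ and $B + ev^t$ together with the vector $-v$ yields
$$(A + ev^t) + e(-v)^t \preceq^s (B + ev^t) + e(-v)^t,$$
which simplifies to $A \preceq^s B$ since $e(-v)^t = -ev^t$. This closes the equivalence.

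There is essentially no substantive obstacle here: the entire content is the identity $D(ev^t) = ev^t$, i.e. the fact that a doubly stochastic matrix absorbs the rank-one shift $ev^t$ when acting on the left. The only point requiring care is to observe that the same $D$ witnessing one majorization also witnesses the other, so that no new doubly stochastic matrix must be constructed; this is immediate from the displayed computation. The symmetry trick with $-v$ then removes any need to repeat the argument for the reverse direction.
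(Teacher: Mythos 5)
Your proof is correct and follows essentially the same route as the paper: the forward direction rests on the identity $D(ev^t) = ev^t$ for $D \in \Omega_n$, and the converse is obtained by applying the forward direction with $-v$ in place of $v$. No issues.
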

\begin{proof}

    Let $A \preceq^s B$. Then $A = QB$ for some $Q \in \Omega_n$. Observe that $QB + Qe v^t = A + e v^t$. Therefore $A + e v^t \preceq^s B + e v^t$.

    Now assume that $A + e v^t \preceq^s B + e v^t$. By the proven above, $A = (A + e v^t) - e v^t \preceq^s (B + e v^t) - e v^t = B$.
\end{proof}

\begin{corollary}\label{cor:+J}
    Let $A, B \in M_{n, m}$ and $\lambda \in \RR$. Then  $A \preceq^s B$ if and only if $A + \lambda J \preceq^s B + \lambda J$.
\end{corollary}

We now show how the general case of strong majorization can be reduced to column stochastic matrices.

\begin{reduction}\label{def:A':v}
Given $A, B \in M_{n, m}$ we construct column stochastic matrices $A', B' \in M_{n, m}$ in the following way:
\begin{enumerate}
    \item Choose $\lambda \geq 0$ large enough so that the matrices $A_1 = A + \lambda J$ and $B_1 = B + \lambda J$ are nonnegative. By Corollary \ref{cor:+J}, $A \preceq^s B$ if and only if $A_1 \preceq^s B_1$.
    
    Note that $e^tA_1 = e^tA + n\lambda e^t$ and $e^tB_1 = e^tB + n\lambda e^t$.

    \item Choose $\mu > 0$ large enough so that the column sums of $A_2 = \frac{1}{\mu} A_1$ and $B_2 = \frac{1}{\mu} B_1$ do not exceed $1$. Due to Lemma \ref{lem:lambda}, we have $A_1 \preceq^s B_1$ if and only if $A_2 \preceq^s B_2$.
    
    Note that $e^t A_2 = \frac{1}{\mu} e^t A_1 = \frac{1}{\mu}(e^tA + n\lambda e^t)$ and $e^t B_2 = \frac{1}{\mu}(e^tB + n\lambda e^t)$.

    \item\label{item:A'} Define $v \in \RR^m$ by $v^t = \frac{1}{n}(e^t - e^t B_2)$ and let $A' = A_2 + ev^t$ and $B' = B_2 + ev^t$. Note that $v$ is nonnegative by the definition of $B_2$. Then $A'$ and $B'$ are also nonnegative.

    By Lemma \ref{lem:ev^t} we obtain that $A_2 \preceq^s B_2$ if and only if $A' \preceq^s B'$. Therefore $A \preceq^s B$ if and only if $A' \preceq^s B'$.
    
    Observe that $$e^t A'= e^tA_2 + e^tev^t = e^tA_2 + nv^t =  \frac{1}{\mu}(e^tA + n\lambda e^t) + e^t - \frac{1}{\mu}(e^tB + n\lambda e^t) = e^t + \frac{1}{\mu}(e^tA - e^tB)$$
    and $$e^t B' = e^tB_2 + e^tev^t = e^tB_2 + nv^t = e^t.$$

    Therefore $B'$ is column stochastic. Moreover, $e^tA = e^tB$ if and only if $e^tA' = e^tB'$.
\end{enumerate}
\end{reduction}

We have obtained

\begin{corollary}\label{cor:strong}
    Let $A, B \in M_{n, m}$. Let $A', B' \in M_{n, m}$ be obtained via Reduction \ref{def:A':v}. Then $A \preceq^s B$ if and only if $A' \preceq^s B'$. In addition, $B'$ is column stochastic.
    
    Since $e^t A = e^t B$ is a necessary condition for $A \preceq^s B$, we obtain that if the majorization holds, then $e^tA' = e^t$ and both $A', B'$ are column stochastic matrices.
\end{corollary}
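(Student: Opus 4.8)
The plan is to observe that \textbf{Corollary \ref{cor:strong}} is essentially a repackaging of the three-step construction in Reduction \ref{def:A':v}, so the proof amounts to chaining together the three ``if and only if'' equivalences already established inside the reduction, together with the standing necessary condition on column sums. First I would set up $A', B'$ exactly as produced by Reduction \ref{def:A':v}, recalling the intermediate matrices $A_1 = A + \lambda J$, $A_2 = \frac{1}{\mu}A_1$ and analogously $B_1, B_2$, and the vector $v$ with $v^t = \frac{1}{n}(e^t - e^t B_2)$.

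The core equivalence $A \preceq^s B \iff A' \preceq^s B'$ is obtained by composing three links. The first link, $A \preceq^s B \iff A_1 \preceq^s B_1$, is \textbf{Corollary \ref{cor:+J}} (translation by $\lambda J$). The second, $A_1 \preceq^s B_1 \iff A_2 \preceq^s B_2$, is \textbf{Lemma \ref{lem:lambda}} applied with the nonzero scalar $1/\mu$. The third, $A_2 \preceq^s B_2 \iff A' \preceq^s B'$, is \textbf{Lemma \ref{lem:ev^t}} applied with the vector $v$, since $A' = A_2 + ev^t$ and $B' = B_2 + ev^t$. Concatenating these three equivalences gives the desired biconditional. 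That $B'$ is column stochastic follows from the two facts verified at the end of step \ref{item:A'}: nonnegativity of $B' = B_2 + ev^t$ (because $B_2 \geq O$ by the choice of $\mu$ and $v \geq 0$ by the choice of $\lambda$ and $\mu$), and the computation $e^t B' = e^t$.

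For the final clause, I would invoke the necessary condition that $e^t A = e^t B$ whenever $A \preceq^s B$ (the column-sum lemma preceding \textbf{Lemma \ref{lem:right.inv}}, specialized to strong majorization, which implies directional majorization). Combined with the identity $e^t A' = e^t + \frac{1}{\mu}(e^t A - e^t B)$ derived in step \ref{item:A'}, the hypothesis $e^t A = e^t B$ yields $e^t A' = e^t$; together with nonnegativity of $A'$ (noted in step \ref{item:A'}), this makes $A'$ column stochastic as well.

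I do not anticipate a genuine obstacle here: every ingredient has already been proved, and the statement is a consequence of transitively composing the equivalences. The only point requiring a word of care is that the closing claim is conditional --- $A'$ is guaranteed column stochastic only \emph{when the majorization holds}, whereas $B'$ is column stochastic unconditionally --- so I would phrase the two conclusions separately to avoid conflating them, exactly as the statement does.
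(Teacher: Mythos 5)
Your proposal is correct and follows exactly the paper's own route: the paper states this corollary with "We have obtained" because the three equivalences (Corollary \ref{cor:+J}, Lemma \ref{lem:lambda}, Lemma \ref{lem:ev^t}) and the computations of $e^tA'$ and $e^tB'$ are already carried out inside Reduction \ref{def:A':v}, and you reassemble them in the same order with the same justifications. Nothing is missing.
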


To illustrate the reduction process, consider the following example.

\begin{example}
    Let us perform Reduction \ref{def:A':v} on $A = \left(\begin{matrix}
-1 & -2 & 4 & -6\\
1 & -4 & 2 & -6
\end{matrix}\right)$ and $B = \left(\begin{matrix}
3 & -6 & 0 & -6 \\
-3 & 0 & 6 & -6
\end{matrix}\right)$.

First, taking $\lambda = 6$ we obtain $A_1 = \left(\begin{matrix}
5 & 4 & 10 & 0\\
7 & 2 & 8 & 0
\end{matrix}\right)$ and $B_1 = \left(\begin{matrix}
9 & 0 & 6 & 0\\
3 & 6 & 12 & 0
\end{matrix}\right)$.

Then, taking $\mu = 20$, we obtain $A_2 = \left(\begin{matrix}
0.25 & 0.2 & 0.5 & 0\\
0.35 & 0.1 & 0.4 & 0
\end{matrix}\right)$ and $B_2 = \left(\begin{matrix}
0.45 & 0 & 0.3 & 0\\
0.15 & 0.3 & 0.6 & 0
\end{matrix}\right)$.

It follows that $v^t = \frac{1}{n}(e^t - e^t B_2) = \frac{1}{2}((1 \ 1 \ 1 \ 1) - (0.6 \ 0.3 \ 0.9 \ 0)) = (0.2 \ 0.35 \ 0.05 \ 0.5)$.

Finally, $A' = \left(\begin{matrix}
0.45 & 0.55 & 0.55 & 0.5\\
0.55 & 0.45 & 0.45 & 0.5
\end{matrix}\right)$ and $B' = \left(\begin{matrix}
0.65 & 0.35 & 0.35 & 0.5\\
0.35 & 0.65 & 0.65 & 0.5
\end{matrix}\right)$.

Observe that $A', B' \in \Omega^{col}_{2, 4}$. Moreover, $A \preceq^s B$ if and only if $A' \preceq^s B'$.
\end{example}

A slightly different approach is the following.

\begin{reduction}\label{def:A':D}
Given $A, B \in M_{n, m}$ we construct column stochastic matrices $A', B' \in M_{n, m}$ in the following way:
\begin{enumerate}
    \item Choose $\lambda \geq 0$ large enough so that the matrices $A_1 = A + \lambda J$ and $B_1 + \lambda J$ are nonnegative and do not contain zero columns. By Corollary \ref{cor:+J}, $A \preceq^s B$ if and only if $A_1 \preceq^s B_1$.
    
    Note that $e^tA_1 = e^tA + n\lambda e^t$ and $e^tB_1 = e^tB + n\lambda e^t$.

    \item Let $D = diag(e^t B_1)$. That is, $D$ is the diagonal matrix, made up of the column sums of $B_1$. Note that by the definition of $B_1$, the diagonal entries of $D$ are strictly positive. The same is true for $D^{-1}$.
    
    Consider $A' = A_1D^{-1}$ and $B' = B_1D^{-1}$. By Corollary \ref{cor:AD<BD} we obtain that $A_1 \preceq^s B_1$ if and only if $A' \preceq^s B'$. Therefore $A \preceq^s B$ if and only if $A' \preceq^s B'$.

    Observe that $A'$ and $B'$ are nonnegative. In addition, $e^tB' = e^t$ and $$e^tA' = \left(\begin{array}{c|c|c}\frac{e^tA^{(1)} + n\lambda}{e^tB^{(1)} + n\lambda} \ & \ \ldots \ & \ \frac{e^tA^{(m)} + n\lambda}{e^tB^{(m)} + n\lambda}\end{array}\right).$$

    Therefore $B'$ is column stochastic. Moreover, $e^tA = e^tB$ if and only if $e^tA' = e^tB'$.
\end{enumerate}
\end{reduction}

Clearly, Corollary \ref{cor:strong} is also satisfied for Reduction \ref{def:A':D}. This reduction offers a more direct approach via the diagonal scaling than Reduction \ref{def:A':v}. On the other hand, Reduction \ref{def:A':v} is more flexible and provides a more convenient general expression for $e^tA'$.

\begin{example}
    Let us perform Reduction \ref{def:A':D} on $A = \left(\begin{matrix}
-1 & -2 & 4 & -6\\
1 & -4 & 2 & -6
\end{matrix}\right)$ and $B = \left(\begin{matrix}
3 & -6 & 0 & -6 \\
-3 & 0 & 6 & -6
\end{matrix}\right)$.

First, taking $\lambda = 7$ we obtain $A_1 = \left(\begin{matrix}
6 & 5 & 11 & 1\\
8 & 3 & 9 & 1
\end{matrix}\right)$ and $B_1 = \left(\begin{matrix}
10 & 1 & 7 & 1\\
4 & 7 & 13 & 1
\end{matrix}\right)$.

Then $D = \diag(14, 8, 20, 2)$ and we obtain $A' = \left(\begin{matrix}
\frac{6}{14} & \frac{5}{8} & \frac{11}{20} & \frac{1}{2}\\ \\ 
\frac{8}{14} & \frac{3}{8} & \frac{9}{20} & \frac{1}{2}
\end{matrix}\right)$ and $B' = \left(\begin{matrix}
\frac{10}{14} & \frac{1}{8} & \frac{7}{20} & \frac{1}{2}\\ \\ 
\frac{4}{14} & \frac{7}{8} & \frac{13}{20} & \frac{1}{2}
\end{matrix}\right)$.

Observe that $A', B' \in \Omega^{col}_{2, 4}$. Moreover, $A \preceq^s B$ if and only if $A' \preceq^s B'$.
\end{example}

\subsection{Weak majorization}

Note that in Lemma \ref{lem:ev^t} we only use the row stochasticity of $Q$. Therefore statements \ref{lem:lambda} --- \ref{cor:+J} remain true for weak majorization and we obtain

\begin{corollary}
    Let $A, B \in M_{n, m}$. Define a pair $A', B' \in M_{n, m}$ via Reduction \ref{def:A':v} or via Reduction \ref{def:A':D}. Then $A \preceq^w B$ if and only if $A' \preceq^w B'$. In addition, $B'$ is column stochastic. 
\end{corollary}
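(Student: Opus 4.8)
The plan is to piggyback on the strong-majorization reduction machinery, exploiting the key observation flagged in the paragraph immediately preceding the statement: the three ingredient lemmas behind both reductions (Lemma \ref{lem:lambda}, Lemma \ref{lem:ev^t}, and Corollary \ref{cor:+J}) rely only on the row stochasticity of the transforming matrix $Q$, never on its column stochasticity. Since weak majorization $A \preceq^w B$ is by Definition \ref{def:dir:weak} exactly the relation $A = RB$ for some $R \in \Omega_n^{row}$, the entire argument used to establish the strong-majorization reduction transfers verbatim once we check that each step preserves row stochasticity in place of double stochasticity.

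First I would re-examine the proof of Lemma \ref{lem:ev^t}: the forward direction writes $A = QB$ and observes $A + ev^t = Q(B + ev^t)$, using only $Qe = e$, which holds for any $Q \in \Omega_n^{row}$; the backward direction is purely formal. Thus Lemma \ref{lem:ev^t}, and by the same token Lemma \ref{lem:lambda} and Corollary \ref{cor:+J}, hold with $\preceq^s$ replaced by $\preceq^w$. I would state this explicitly as the weak-majorization analogues of steps \ref{lem:lambda}–\ref{cor:+J}. For the diagonal-scaling variant I also invoke Lemma \ref{lem:right.inv}(1) (equivalently Corollary \ref{cor:AD<BD}), which already covers weak majorization directly.

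With these analogues in hand, the two reduction procedures apply unchanged. In Reduction \ref{def:A':v}, step 1 uses Corollary \ref{cor:+J}, step 2 uses Lemma \ref{lem:lambda}, and step 3 uses Lemma \ref{lem:ev^t}; replacing each $\preceq^s$ by $\preceq^w$ yields the chain of equivalences $A \preceq^w B \iff A_1 \preceq^w B_1 \iff A_2 \preceq^w B_2 \iff A' \preceq^w B'$. The construction of $A'$ and $B'$ is identical, so the computation $e^t B' = e^t$ showing $B'$ is column stochastic is entirely unaffected (it is a statement about column sums, independent of which majorization we use). The same remarks apply to Reduction \ref{def:A':D} via Corollary \ref{cor:AD<BD}.

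The main obstacle, such as it is, is not a genuine difficulty but a bookkeeping subtlety: unlike the symmetric strong-majorization case, weak majorization is not symmetric, so I must be careful that the equivalences run in both directions. This is guaranteed because each of Lemma \ref{lem:lambda}, Lemma \ref{lem:ev^t}, Corollary \ref{cor:+J}, and Corollary \ref{cor:AD<BD} is stated as a two-sided ``if and only if,'' and I confirmed above that the backward implications for the weak case are just as formal as the forward ones. Hence no additional hypothesis is needed, and the corollary follows.
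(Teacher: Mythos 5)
Your proposal is correct and follows essentially the same route as the paper: the paper likewise justifies this corollary by observing that the proof of Lemma \ref{lem:ev^t} (and hence of Lemma \ref{lem:lambda} and Corollary \ref{cor:+J}) uses only the row stochasticity of $Q$, so both reductions carry over to $\preceq^w$, with Corollary \ref{cor:AD<BD} already covering the diagonal-scaling variant. Your extra check that the equivalences are two-sided and that column stochasticity of $B'$ is independent of the majorization used is sound and consistent with the paper's (terser) argument.
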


\begin{remark}
    Note that weak majorization does not imply that $e^t A = e^t B$. Therefore we cannot always assume that the conditions $A \preceq^w B$ and $B \in \Omega^{col}_{n, m}$ imply that $A \in \Omega^{col}_{n, m}$. For example, $\left(\begin{matrix}
1 & 1 \\
1 & 1
\end{matrix}\right) \preceq^w \left(\begin{matrix}
1 & 1 \\
0 & 0
\end{matrix}\right)$.

However, if in Reduction \ref{def:A':v}, Item \ref{item:A'}, we substitute $v = \frac{1}{n}(e^t - e^t B_2)$ with $w = \frac{1}{n}(e^t - e^t A_2)$, then $A'$ becomes column stochastic, while $B'$, in general, does not. Analogously, in Reduction \ref{def:A':D} we can substitute $D = \diag(e^t B_1)$ with $\diag (e^t A_1)$.
\end{remark}

\subsection{Directional majorization}

Statements \ref{lem:lambda} --- \ref{cor:+J} remain true for directional majorization, as we show below.

\begin{lemma}\label{lem:d:lambda}
    Let $A, B \in M_{n, m}$ and $\lambda \in \RR$, $\lambda \neq 0$. Then  $A \preceq^d B$ if and only if $\lambda A \preceq^d \lambda B$.
\end{lemma}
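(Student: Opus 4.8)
The plan is to unwind the definition of directional majorization and reduce the claim to the vector case, where scaling by a nonzero constant is already compatible with the majorization relation. Recall that $A \preceq^d B$ means $Av \preceq Bv$ for every $v \in \RR^m$. So the natural first step is to fix an arbitrary $v \in \RR^m$ and ask how $(\lambda A)v$ and $(\lambda B)v$ relate. Since $(\lambda A)v = \lambda(Av)$ and $(\lambda B)v = \lambda(Bv)$, the statement reduces to the following purely vectorial fact: for $a, b \in \RR^n$ and $\lambda \neq 0$, one has $a \preceq b$ if and only if $\lambda a \preceq \lambda b$. This is essentially the vector analogue of Lemma \ref{lem:lambda}.

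Next I would establish this vector fact, splitting into the two sign cases for $\lambda$. For $\lambda > 0$, scaling is order-preserving, so rearranging in non-increasing order commutes with multiplication by $\lambda$: $(\lambda a)^\downarrow = \lambda a^\downarrow$. Hence each partial sum inequality $\sum_{j=1}^k a^\downarrow_j \le \sum_{j=1}^k b^\downarrow_j$ is simply multiplied through by $\lambda > 0$ and preserved, and likewise the total-sum equality, giving $\lambda a \preceq \lambda b$. For $\lambda < 0$ the ordering is reversed, so $(\lambda a)^\downarrow = \lambda a^\uparrow$, where $a^\uparrow$ denotes the non-decreasing rearrangement; the partial sums of the smallest entries now come into play, and one uses the identity $\sum_{j=1}^k a^\uparrow_j = \sum_{j=1}^n a_j - \sum_{j=1}^{n-k} a^\downarrow_j$ together with the total-sum equality to convert the top-$k$ inequalities for $a,b$ into the corresponding inequalities for $\lambda a, \lambda b$. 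Alternatively, and more cleanly, I could invoke Theorem \ref{HLP}: $a \preceq b$ iff $a = Db$ for some $D \in \Omega_n$, and observe that $\lambda a = D(\lambda b)$ with the \emph{same} $D$, which instantly yields $\lambda a \preceq \lambda b$ for \emph{any} $\lambda$, positive or negative, since the doubly stochastic matrix $D$ is unaffected by the scalar.

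Having the vector fact in hand, the forward direction follows: if $A \preceq^d B$, then for every $v$ we have $Av \preceq Bv$, hence $\lambda(Av) \preceq \lambda(Bv)$, i.e. $(\lambda A)v \preceq (\lambda B)v$, so $\lambda A \preceq^d \lambda B$. The converse is immediate by symmetry, applying the forward implication with the scalar $1/\lambda$ (which exists precisely because $\lambda \neq 0$) to the pair $\lambda A, \lambda B$.

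I do not anticipate a serious obstacle here; the result is a routine consequence of the definition combined with the Hardy--Littlewood--P\'olya characterization of vector majorization. The only point requiring mild care is the sign of $\lambda$, which the $D$-based argument sidesteps entirely by keeping the doubly stochastic matrix fixed under scaling. This is why I would favor the Theorem \ref{HLP} route over the explicit rearrangement bookkeeping: it handles both sign cases uniformly and makes the ``if and only if'' transparent.
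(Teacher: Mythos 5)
Your argument is correct: reducing to the vector statement via the definition of $\preceq^d$ and then invoking Theorem \ref{HLP} (the same $D$ works for $\lambda a = D(\lambda b)$) handles both signs of $\lambda$ at once, and the converse via $1/\lambda$ is fine. The paper states this lemma without proof; note that it is also an immediate special case of machinery already present there, namely Lemma \ref{lem:right.inv} (or Corollary \ref{cor:AD<BD}) applied with the invertible matrix $Y = \lambda I \in M_m$, since $\lambda A = A(\lambda I)$ — but your direct argument is equally valid and self-contained.
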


\begin{lemma}\label{lem:d:ev^t}
    Let $A, B \in M_{n, m}$ and $v \in \RR^m$. Then $A \preceq^d B$ if and only if $A + ev^t \preceq^d B + ev^t$.
\end{lemma}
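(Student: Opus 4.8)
The plan is to unwind the definition of directional majorization pointwise in the test vector $u \in \RR^m$ and to reduce the claim to the elementary fact that vector majorization is unchanged when the same constant is added to every entry of both vectors. The starting observation is that for any $u \in \RR^m$ we have
$$(A + ev^t)u = Au + e(v^t u) = Au + (v^t u)e,$$
and likewise $(B + ev^t)u = Bu + (v^t u)e$. Thus passing from the pair $A, B$ to the pair $A + ev^t, B + ev^t$ shifts both test vectors $Au$ and $Bu$ by the same scalar multiple $c := v^t u$ of the all-ones vector $e$.

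Next I would record the auxiliary fact that, for $a, b \in \RR^n$ and any $c \in \RR$, one has $a \preceq b$ if and only if $a + ce \preceq b + ce$. This is immediate from the Hardy--Littlewood--P\'olya theorem (Theorem \ref{HLP}): if $a = Db$ with $D \in \Omega_n$, then since $De = e$ we obtain $a + ce = Db + cDe = D(b + ce)$, whence $a + ce \preceq b + ce$; the reverse implication is the same statement applied with the constant $-c$ to the shifted vectors.

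Combining these two ingredients, fix $u \in \RR^m$ and set $c = v^t u$; the shift-invariance then shows that $(A + ev^t)u \preceq (B + ev^t)u$ holds if and only if $Au \preceq Bu$. Since this equivalence is valid for every $u$, quantifying over all $u \in \RR^m$ yields $A \preceq^d B$ if and only if $A + ev^t \preceq^d B + ev^t$, as required. There is no serious obstacle: the only point needing care is the shift-invariance of vector majorization, which is itself a one-line consequence of the fact that doubly stochastic matrices fix $e$, and the remainder is routine bookkeeping of the pointwise definition of directional majorization.
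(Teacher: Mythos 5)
Your proposal is correct and follows essentially the same route as the paper: both reduce the claim pointwise in the test vector to the observation that $(A+ev^t)u = Au + (v^tu)e$ and then use the Hardy--Littlewood--P\'olya theorem together with the fact that doubly stochastic matrices fix $e$ to absorb the shift. The only cosmetic difference is that you isolate the shift-invariance of vector majorization as a two-sided auxiliary fact, while the paper proves one implication and obtains the converse by subtracting $ev^t$.
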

\begin{proof}

    Let $A \preceq^d B$. Consider arbitrary $x \in \RR^m$. Then there exists $Q \in \Omega_n$ such that $Ax = QBx$. Observe that $QBx + Qe v^tx = Ax + e v^tx$. Therefore $(A + e v^t)x \preceq (B + e v^t)x$. As $x$ was arbitrary, we obtain $A + e v^t \preceq^d B + e v^t$.

    Now assume that $A + e v^t \preceq^d B + e v^t$. By the proven above, $A = (A + e v^t) - e v^t \preceq^d (B + e v^t) - e v^t = B$.
\end{proof}

\begin{corollary}\label{cor:d:+J}
    Let $A, B \in M_{n, m}$ and $\lambda \in \RR$. Then  $A \preceq^d B$ if and only if $A + \lambda J \preceq^d B + \lambda J$.
\end{corollary}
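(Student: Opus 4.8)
The statement to prove is Corollary~\ref{cor:d:+J}, which asserts that adding $\lambda J$ to both matrices preserves directional majorization in both directions. The plan is to derive this as an immediate consequence of Lemma~\ref{lem:d:ev^t}, exactly mirroring how Corollary~\ref{cor:+J} follows from Lemma~\ref{lem:ev^t} in the strong-majorization case. The key observation is that $\lambda J$ is a rank-one matrix of the special form $e v^t$ that Lemma~\ref{lem:d:ev^t} already handles.

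First I would observe that the all-ones matrix $J \in M_{n,m}$ factors as $J = e e^t$, where on the left $e \in \RR^n$ is the all-ones column vector and on the right $e^t$ is the all-ones row vector in $\RR^m$. Consequently $\lambda J = e (\lambda e)^t$, which is precisely of the form $e v^t$ with the choice $v = \lambda e \in \RR^m$. Having identified this, the corollary is just a specialization: applying Lemma~\ref{lem:d:ev^t} with this particular $v$ gives that $A \preceq^d B$ if and only if $A + \lambda J \preceq^d B + \lambda J$, for every $\lambda \in \RR$.

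One small point worth noting is that Lemma~\ref{lem:d:ev^t} is stated for arbitrary $v \in \RR^m$ with no restriction, and in particular it covers the degenerate case $\lambda = 0$ (where $v = 0$ and the statement is trivially true), so no separate handling of $\lambda = 0$ is needed here, in contrast to Lemma~\ref{lem:d:lambda} which required $\lambda \neq 0$. Thus the corollary holds for all real $\lambda$ as stated.

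Since every ingredient is already in place, I do not expect any genuine obstacle; the proof is purely a matter of recognizing the factorization $\lambda J = e(\lambda e)^t$ and invoking the preceding lemma. The only thing to be careful about is keeping the two roles of the symbol $e$ straight (the $n$-dimensional column versus the $m$-dimensional row appearing inside $v = \lambda e$), but this causes no difficulty because the paper's convention already fixes $e$ as the all-ones vector of whatever size the context demands.
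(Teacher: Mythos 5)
Your proof is correct and matches the paper's (implicit) argument exactly: the corollary is the specialization of Lemma~\ref{lem:d:ev^t} to $v = \lambda e \in \RR^m$, since $\lambda J = e(\lambda e)^t$. Your remark that no $\lambda \neq 0$ restriction is needed (unlike in Lemma~\ref{lem:d:lambda}) is also accurate.
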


Therefore the same principles apply to directional majorization.

\begin{corollary}\label{cor:directional}
    Let $A, B \in M_{n, m}$. Define a pair $A', B' \in M_{n, m}$ via Reduction \ref{def:A':v} or via Reduction \ref{def:A':D}. Then $A \preceq^d B$ if and only if $A' \preceq^d B'$. In addition, $B'$ is column stochastic. 
    
    Since $e^t A = e^t B$ is a necessary condition for $A \preceq^d B$, we obtain that if the majorization holds, then $e^tA' = e^tB'$ and both $A', B'$ are column stochastic matrices.
\end{corollary}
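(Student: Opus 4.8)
The plan is to establish Corollary \ref{cor:directional} in exact parallel with the strong-majorization case (Corollary \ref{cor:strong}), simply substituting the directional analogues of the three building-block lemmas. The corollary makes two assertions: first, that $A \preceq^d B$ if and only if $A' \preceq^d B'$ for the pair produced by either reduction; and second, that the reduction yields column stochastic matrices, with $A'$ column stochastic precisely when the necessary condition $e^t A = e^t B$ holds.

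For the first assertion via Reduction \ref{def:A':v}, I would trace the three steps of that reduction, replacing each invocation of a strong-majorization lemma by its directional counterpart. Step~1 adds $\lambda J$; invariance here is exactly Corollary \ref{cor:d:+J}. Step~2 scales by $\tfrac{1}{\mu}$; invariance is Lemma \ref{lem:d:lambda}. Step~3 adds $e v^t$; invariance is Lemma \ref{lem:d:ev^t}. Chaining these three equivalences gives $A \preceq^d B \iff A' \preceq^d B'$. For Reduction \ref{def:A':D} the argument is the same, except that Step~2 uses right-multiplication by the nonsingular diagonal matrix $D^{-1}$, whose invariance is Corollary \ref{cor:AD<BD} (whose directional part is already granted). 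The column-sum computations $e^t B' = e^t$ and the formula for $e^t A'$ are purely arithmetic and identical to those carried out in the two Reductions, so they need not be repeated.

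For the second assertion, I would invoke the necessary condition established earlier: if $A \preceq^d B$ then $e^t A = e^t B$ (the first unnamed Lemma of Section~\ref{sec:Basic}). Both Reductions already record that $e^t B' = e^t$ unconditionally, so $B'$ is always column stochastic, and that $e^t A = e^t B$ holds if and only if $e^t A' = e^t B'$. Combining these, whenever $A \preceq^d B$ we get $e^t A' = e^t B' = e^t$, so $A'$ is column stochastic as well. Since $A'$ and $B'$ are nonnegative by construction (this is verified inside each Reduction), both are genuine elements of $\Omega^{col}_{n,m}$.

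There is essentially no obstacle here: the entire content of the corollary has been pre-loaded by the directional versions of the lemmas (\ref{lem:d:lambda}, \ref{lem:d:ev^t}, \ref{cor:d:+J}) and by the directional part of Corollary \ref{cor:AD<BD}, which is exactly why the author states ``the same principles apply'' immediately beforehand. The only point deserving a word of care is that Lemma \ref{lem:d:ev^t} genuinely covers the directional case and not merely weak majorization: its proof uses a matrix $Q \in \Omega_n$ that depends on the chosen test vector $x \in \RR^m$, and one must note that the same $Q$ simultaneously realizes $A x = Q B x$ and $(A + e v^t)x = Q (B + e v^t)x$, since $e v^t x = Q e v^t x$ for any $Q \in \Omega_n$ (as $Q e = e$). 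With that observation the equivalence is immediate, and the corollary follows by direct substitution.
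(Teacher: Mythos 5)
Your proposal is correct and follows exactly the route the paper intends: the corollary is stated as an immediate consequence of Lemma \ref{lem:d:lambda}, Lemma \ref{lem:d:ev^t}, Corollary \ref{cor:d:+J} and the directional part of Corollary \ref{cor:AD<BD}, chained through the steps of the two Reductions just as in Corollary \ref{cor:strong}. Your closing remark about Lemma \ref{lem:d:ev^t} (that $Qe = e$ makes the same $Q$ work for the shifted matrices) is exactly the content of the paper's proof of that lemma, so nothing is missing.
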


\subsection{Vector majorization}\label{subsec:Reductions:vectors}

As a natural consequence of our matrix reduction techniques, vector majorization can be similarly reduced to the study of probability distributions.

More explicitly, consider $a, b \in \RR^n$. Assume that $e^t a = e^t b$, otherwise $a \not\preceq b$. Choose $\lambda \geq 0$ so that $a + \lambda e$ and $b + \lambda e$ are nonnegative and nonzero.

Consider $a' = \frac{1}{e^t a + n\lambda} (a + \lambda e)$ and $b' = \frac{1}{e^t a + n \lambda} (b + \lambda e)$. Then $a'$ and $b'$ are nonnegative vectors with $e^t a' = e^t b' = 1$.

Finally, similarly to Corollary \ref{cor:strong} we obtain that $a \preceq b$ if and only if $a' \preceq b'$.

\subsection{$(0, 1)$-matrices}

It is worth considering the case of $(0, 1)$-matrices separately. Majorization for $(0, 1)$-matrices can be easily reduced to column stochastic matrices with the help of the following map.

\begin{definition}\label{def:Theta}\rm
    Let us define an operator $\Theta$ on $M_{n, m}$ by $\Theta(X)^{(j)} =  \begin{cases}
        \frac{X^{(j)}}{e^t X^{(j)}}, \text{ if } X^{(j)} \neq 0,\\
        \frac{1}{n}e, \text{otherwise}
    \end{cases}$ for any $j \in \M$.
\end{definition}

\begin{remark}\label{rem:Theta:column}
    Let $A \in M_{n, m}(0, 1)$. Then every column of $\Theta(A)$ up to a permutation lies in the set $\{e_1, \frac{1}{2}e_1 + \frac{1}{2}e_2, \frac{1}{3}e_1 + \frac{1}{3}e_2 + \frac{1}{3}e_3, \ldots, \sum\limits_{q = 1}^{n} \frac{1}{n}e_q\}$.

    In particular, $\Theta(A) \in \Omega^{col}_{n, m}$.
\end{remark}

\begin{theorem}\label{thm:Theta}
    Let $A, B \in M_{n, m}(0, 1)$ with $e^tA = e^tB$. Then the following are equivalent:

    \begin{enumerate}
        \item $A \preceq^d B$;
        \item $A \preceq^s B$;
        \item $A = PB$ for some $P \in P(n)$;
        \item $\Theta(A) \preceq^d \Theta(B)$;
        \item $\Theta(A) \preceq^s \Theta(B)$;
    \end{enumerate}
\end{theorem}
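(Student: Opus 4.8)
The plan is to prove the cyclic chain $(2)\Rightarrow(1)\Rightarrow(3)\Rightarrow(2)$ first and then attach $(4)$ and $(5)$ to it, since several links are immediate from results already available. The equivalence $(2)\Leftrightarrow(3)$ is exactly Theorem \ref{thm:01} combined with Lemma \ref{lem:sim:strong}, and the implications $(2)\Rightarrow(1)$ and $(5)\Rightarrow(4)$ are instances of the general fact that strong majorization implies directional majorization. For $(3)\Rightarrow(5)$ I would first record that $\Theta$ commutes with left multiplication by a permutation: since $e^tP=e^t$ and $Pe=e$ for $P\in P(n)$, a column-by-column check gives $\Theta(PB)=P\,\Theta(B)$ (for nonzero columns one uses $e^t(PB)^{(j)}=e^tB^{(j)}$, and for zero columns $\Theta$ returns $\tfrac1n e=P\tfrac1n e$). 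Hence $A=PB$ forces $\Theta(A)=P\,\Theta(B)$, so $\Theta(A)\sim^s\Theta(B)$ by Lemma \ref{lem:sim:strong} and in particular $\Theta(A)\preceq^s\Theta(B)$, which is $(5)$.

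The heart of the argument is $(1)\Rightarrow(3)$: directional majorization of $(0,1)$-matrices with equal column sums forces equality up to a row permutation. I would view each row $A_{(i)}$ as the subset $\{j:a_{ij}=1\}\subseteq\M$ and, for $T\subseteq\M$, let $g_A(T)$ be the number of rows of $A$ equal to $T$, so that $\sum_T g_A(T)=n$ and $A=PB$ is precisely the statement $g_A=g_B$. The key device is the test vector $v=\sum_{j\in T}e_j-\sum_{j\notin T}e_j$: for a row $S=A_{(i)}$ one computes $\langle A_{(i)},v\rangle=|S\cap T|-|S\setminus T|$, which equals $|T|$ exactly when $S=T$ and is at most $|T|-1$ otherwise. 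Taking the threshold $\tau=|T|-\tfrac12$ and the elementary identity $\sum_i (x_i-\tau)^+=\max_{0\le k\le n}\big(\sum_{j=1}^k x^\downarrow_j-k\tau\big)$ (which, together with the definition of $\preceq$, shows $x\preceq y\Rightarrow\sum_i(x_i-\tau)^+\le\sum_i(y_i-\tau)^+$), the relation $Av\preceq Bv$ collapses to $\tfrac12 g_A(T)\le\tfrac12 g_B(T)$. Thus $g_A(T)\le g_B(T)$ for every $T\subseteq\M$; summing over all $T$ gives $n=\sum_T g_A(T)\le\sum_T g_B(T)=n$, so equality must hold termwise, i.e. $g_A=g_B$ and $A=PB$.

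It remains to feed $(4)$ back into the chain, for which I would prove $(4)\Rightarrow(1)$ by diagonal scaling. Since $e^tA=e^tB$ with $0/1$ entries, a column is zero in $A$ iff it is zero in $B$, and for a nonzero column $j$ both $\Theta(A)^{(j)}$ and $\Theta(B)^{(j)}$ arise by dividing by the common factor $c_j=e^tA^{(j)}=e^tB^{(j)}$. If there are no zero columns, then $\Theta(A)=AD^{-1}$ and $\Theta(B)=BD^{-1}$ with $D=\diag(c_1,\dots,c_m)$ nonsingular, and Corollary \ref{cor:AD<BD} gives $(4)\Leftrightarrow(1)$ at once. Zero columns appear identically in both matrices: they contribute nothing to any $Av,Bv$, and in $\Theta(A),\Theta(B)$ they become identical columns $\tfrac1n e$, i.e. multiples of $e$, whose presence merely shifts every entry of $\Theta(A)v$ and $\Theta(B)v$ by the same constant and hence leaves majorization unaffected; deleting them reduces to the no-zero-column case. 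Combining $(2)\Leftrightarrow(3)$, $(2)\Rightarrow(1)\Rightarrow(3)$, $(3)\Rightarrow(5)\Rightarrow(4)$ and $(4)\Rightarrow(1)$ closes all five equivalences.

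The main obstacle is exactly the step $(1)\Rightarrow(3)$: in general directional majorization is strictly weaker than strong majorization, so the conclusion relies essentially on the $0/1$ structure. The subtlety is that a single test vector only yields the one-sided inequality $g_A(T)\le g_B(T)$, never equality; what rescues the argument is the global count $\sum_T g_A(T)=\sum_T g_B(T)=n$, which promotes the whole family of inequalities to equalities simultaneously. I would also double-check the bookkeeping in the zero-column reduction, since that is where the normalization built into $\Theta$ interacts with the majorization and is the most error-prone part of the write-up.
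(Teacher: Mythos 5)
Your proposal is correct, but it takes a genuinely different route from the paper's for the core equivalence of items $(1)$--$(3)$. The paper disposes of $(1)\Leftrightarrow(2)\Leftrightarrow(3)$ in one stroke by citing the external characterization of directional majorization for $(0,1)$-matrices (the same source as Theorem \ref{thm:01}), whereas you re-prove the only nontrivial implication $(1)\Rightarrow(3)$ from scratch: the test vectors $v=\sum_{j\in T}e_j-\sum_{j\notin T}e_j$ combined with the Schur-convex functional $x\mapsto\sum_i(x_i-\tau)^+$ at threshold $\tau=|T|-\tfrac12$ give $g_A(T)\le g_B(T)$ for every $T\subseteq\M$, and the global count $\sum_T g_A(T)=\sum_T g_B(T)=n$ upgrades all of these to equalities, hence $A=PB$. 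I checked the two pillars of that step --- the inequality $|S\cap T|-|S\setminus T|\le|T|-1$ for $S\neq T$ and the identity $\sum_i(x_i-\tau)^+=\max_{0\le k\le n}\bigl(\sum_{j=1}^k x^\downarrow_j-k\tau\bigr)$ --- and both are sound; this is a nice self-contained argument that the paper does not give. For attaching $(4)$ and $(5)$, both you and the paper ultimately rely on Corollary \ref{cor:AD<BD}, but the bookkeeping differs: the paper replaces the common zero columns of $A$ and $B$ by all-ones columns (so that $\Theta$ becomes literally $X\mapsto XD^{-1}$ and both majorizations transfer in both directions simultaneously), whereas you delete the zero columns for $(4)\Rightarrow(1)$, using that the common shift by a multiple of $e$ leaves vector majorization unchanged, and you handle the strong case separately through the identity $\Theta(PB)=P\Theta(B)$, closing everything into the cycle $(2)\Rightarrow(1)\Rightarrow(3)\Rightarrow(2)$ together with $(3)\Rightarrow(5)\Rightarrow(4)\Rightarrow(1)$. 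The paper's replacement trick keeps the matrix dimensions fixed and treats $\preceq^d$ and $\preceq^s$ uniformly, while your version trades that uniformity for a more elementary and fully self-contained proof of the hardest link.
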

\begin{proof}
    The equivalence of Items 1 --- 3 follows from the characterization of directional majorization for $(0, 1)$-matrices, see \cite[Theorem 3.5]{DGS2_01}

    Observe that as $e^t A = e^t B$, we conclude that $A^{(j)}=0$ if and only if $B^{(j)}=0$ for any $j \in \M$. Define $v \in \RR^m$ by $$\begin{cases}
        v_j = 1, \text{ if } A^{(j)} = B^{(j)} = 0;\\
        v_j = 0\text{, otherwise}.
    \end{cases}$$
    
    Define $A', B' \in M_{n, m}(0, 1)$ by $A' = A + ev^t$ and $B' = B + ev^t$.
     In other words, we substitute all zero columns of $A, B$ by the columns of all ones.

     Note that $\Theta(A) = \Theta(A')$ and $\Theta(B) = \Theta(B')$. Moreover, by Lemma \ref{lem:ev^t}, $A \preceq^s B$ if and only if $A' \preceq^s B'$. The same is true for directional majorization due to Lemma \ref{lem:d:ev^t}.

     Observe that $e^tA' = e^tB'$ and matrices $A', B'$ do not have zero columns. Let $D = \diag(e^tA') = \diag(e^tB') \in M_{m}$. Then the matrix $D$ is invertible. Moreover, $\Theta(A') = A'D^{-1}$ and $\Theta(B') = B'D^{-1}$. Then, by Corollary \ref{cor:AD<BD}, we obtain that $A' \preceq^d B'$ if and only if $\Theta(A) = \Theta(A') \preceq^d  \Theta(B') = \Theta(B)$, while $A' \preceq^s B'$ if and only if $\Theta(A) \preceq^s \Theta(B)$.
\end{proof}

\begin{corollary}
    Let $A, B \in M_{n, m}(0, 1)$ with $A \preceq^d B$. Then the following holds:

    \begin{enumerate}
        \item $A \preceq^s B$;
        \item $A = PB$ for some $P \in P(n)$;
        \item $\Theta(A) \preceq^d \Theta(B)$;
        \item $\Theta(A) \preceq^s \Theta(B)$;
    \end{enumerate}
\end{corollary}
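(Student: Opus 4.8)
The final statement is a corollary to Theorem~\ref{thm:Theta}, so the plan is essentially to reduce it to that theorem and handle the mismatch in hypotheses. The corollary drops the explicit assumption $e^t A = e^t B$ that Theorem~\ref{thm:Theta} carries, replacing it with the single hypothesis $A \preceq^d B$. Thus the first thing I would do is recover the column-sum equality: directional majorization implies strong-type column-sum constraints, and indeed the very first lemma of Section~\ref{sec:Basic} states that $A \preceq^d B$ forces $e^t A = e^t B$. So from $A \preceq^d B$ alone I immediately obtain $e^t A = e^t B$, which restores exactly the standing hypothesis of Theorem~\ref{thm:Theta}.

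Once $e^t A = e^t B$ is in hand, the rest is a direct invocation of the theorem. With both hypotheses $A, B \in M_{n,m}(0,1)$ and $e^t A = e^t B$ satisfied, Theorem~\ref{thm:Theta} asserts the equivalence of its five items. Since Item~1 ($A \preceq^d B$) is precisely our assumption, each of the remaining four items follows: Item~2 gives $A \preceq^s B$, Item~3 gives $A = PB$ for some $P \in P(n)$, Item~4 gives $\Theta(A) \preceq^d \Theta(B)$, and Item~5 gives $\Theta(A) \preceq^s \Theta(B)$. These are exactly the four conclusions of the corollary, in the same order.

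There is really no substantive obstacle here; the content is entirely contained in Theorem~\ref{thm:Theta}. The only subtlety worth flagging is the logical structure: the corollary is stated as a one-directional implication (hypothesis $A \preceq^d B$ yields four consequences), whereas the theorem is an equivalence. I would make clear that we are simply reading off four of the five equivalent conditions from the one we are given, after first upgrading the hypothesis to include the column-sum equality via the directional-majorization lemma. The proof therefore reduces to a single sentence in each paragraph and can be kept very short.

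\begin{proof}
    By the first lemma of Section~\ref{sec:Basic}, $A \preceq^d B$ implies $e^t A = e^t B$. Hence the hypotheses of Theorem~\ref{thm:Theta} are satisfied. Since $A \preceq^d B$ is Item~1 of that theorem, the equivalence of Items~1--5 yields all of $A \preceq^s B$, $A = PB$ for some $P \in P(n)$, $\Theta(A) \preceq^d \Theta(B)$, and $\Theta(A) \preceq^s \Theta(B)$.
\end{proof}
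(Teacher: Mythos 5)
Your proof is correct and matches the paper's own argument exactly: the paper likewise notes that $A \preceq^d B$ forces $e^t A = e^t B$ and then reads the four conclusions off the equivalence in Theorem~\ref{thm:Theta}. Nothing to add.
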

\begin{proof}

    Let $A \preceq^d B$. Then $e^tA = e^tB$ and the result follows from Theorem \ref{thm:Theta}.
    
\end{proof}

The following example shows that $\Theta(A) \preceq^s \Theta(B)$ does not necessarily imply $A \preceq^s B$ for $A, B \in M_{n, m}(0, 1)$. This happens when the condition $e^tA = e^tB$ is not satisfied.
\begin{example}
    Let $A = \begin{pmatrix}
        1 \\ 1 \\ 1
    \end{pmatrix}, B = \begin{pmatrix} 1 \\ 1 \\ 0\end{pmatrix}$. Then $\Theta(A) = \begin{pmatrix}
        \frac{1}{3} \\ \frac{1}{3} \\ \frac{1}{3}
    \end{pmatrix}$ and $\Theta(B) = \begin{pmatrix}
        \frac{1}{2} \\ \frac{1}{2} \\ 0
    \end{pmatrix}$. Observe that $A \not\preceq^s B$, while $\Theta(A) = (\frac{1}{3}J) \Theta(B)$.
\end{example}

\begin{corollary}
    Let $\Phi$ be an operator on $M_{n, m}$. Assume that $\Phi$ preserves strong majorization for column stochastic matrices.

    Then $\Phi \circ \Theta$ preserves strong majorization for $(0, 1)$-matrices.
\end{corollary}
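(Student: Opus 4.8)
The plan is to reason directly about the composition and reduce the desired property to facts already established in this section. Suppose $\Phi$ preserves strong majorization for column stochastic matrices, and let $A, B \in M_{n, m}(0, 1)$ satisfy $A \preceq^s B$. The goal is to show $(\Phi \circ \Theta)(A) \preceq^s (\Phi \circ \Theta)(B)$, i.e. $\Phi(\Theta(A)) \preceq^s \Phi(\Theta(B))$.

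First I would observe that by Remark \ref{rem:Theta:column}, both $\Theta(A)$ and $\Theta(B)$ lie in $\Omega^{col}_{n, m}$, so they are legitimate inputs for the hypothesis on $\Phi$. Thus it suffices to establish the intermediate relation $\Theta(A) \preceq^s \Theta(B)$; once this is in hand, applying the assumed preservation property of $\Phi$ immediately yields $\Phi(\Theta(A)) \preceq^s \Phi(\Theta(B))$, which is exactly the conclusion. So the entire burden shifts onto proving $\Theta(A) \preceq^s \Theta(B)$ from $A \preceq^s B$.

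To obtain this intermediate relation I would invoke Theorem \ref{thm:Theta}. The only subtlety is that Theorem \ref{thm:Theta} carries the standing hypothesis $e^t A = e^t B$, so I must first verify it holds here. This is free: strong majorization $A \preceq^s B$ requires $A = DB$ for some $D \in \Omega_n$, and since $D$ is column stochastic we have $e^t A = e^t D B = e^t B$ (equivalently, one may cite the lemma stating that $A \preceq^d B$ forces equal column sums, together with the fact that strong majorization implies directional majorization). With $e^t A = e^t B$ confirmed, Theorem \ref{thm:Theta} gives the equivalence of $A \preceq^s B$ and $\Theta(A) \preceq^s \Theta(B)$, delivering exactly what is needed.

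There is no real obstacle here; the statement is essentially a formal consequence of Theorem \ref{thm:Theta} combined with the domain observation of Remark \ref{rem:Theta:column}. The one point requiring care — and the step I would flag as the place a reader might otherwise stumble — is checking that the column-sum hypothesis of Theorem \ref{thm:Theta} is automatically met, since the example immediately preceding this corollary shows that when $e^t A \neq e^t B$ the implication $\Theta(A) \preceq^s \Theta(B) \Rightarrow A \preceq^s B$ can fail. Here, however, we are going in the favorable direction starting from genuine strong majorization of $(0,1)$-matrices, so column sums agree and the equivalence applies cleanly.
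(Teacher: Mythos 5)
Your proposal is correct and follows essentially the same route as the paper: apply Theorem \ref{thm:Theta} to get $\Theta(A)\preceq^s\Theta(B)$, note via Remark \ref{rem:Theta:column} that both images are column stochastic, and then invoke the hypothesis on $\Phi$. Your explicit verification of the column-sum condition $e^tA=e^tB$ is a welcome extra detail that the paper's proof leaves implicit.
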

\begin{proof}
    Let $A, B \in M_{n, m}(0, 1)$ with $A \preceq^s B$. Then $\Theta(A) \preceq^s \Theta(B)$. Moreover, $\Theta(A), \Theta(B)\in \Omega^{col}_{n, m}$ by Remark \ref{rem:Theta:column}. It follows that $\Phi(\Theta(A))\preceq^s \Phi(\Theta(B))$.
\end{proof}

Linear operators preserving strong majorization for $(0, 1)$-matrices were characterized in \cite{GS3_preservers_01}. Note, however, that the operator $\Theta$ is not linear. Indeed, a map with column stochastic image can not be linear, since the zero matrix is not column stochastic.

Therefore, in general, we cannot characterize linear preservers of strong majorization for column stochastic matrices, via the result for $(0, 1)$-matrices. In the vector case, however, this reduction is possible, as we show in the next section.

\section{Linear operators preserving majorization for probability distributions}\label{sec:1}

In this section, we characterize linear operators preserving majorization for probability distributions. As we show in the lemma below, such operators necessarily preserve majorization for $(0, 1)$-vectors.

\begin{definition}\label{def:vectors} \rm
A linear operator $\phi$ on $\RR^n$ {\em preserves majorization for $(0, 1)$-vectors} if $a \preceq b$ implies $\phi(a) \preceq \phi(b)$ for any $a, b \in \{0, 1\}^n$.
\end{definition}

\begin{lemma}\label{lem:vector_case:01}
    Let $\phi$ be a linear operator on $\RR^n$. Assume that $\phi$ preserves majorization for probability distributions. Then $\phi$ preserves majorization for $(0, 1)$-vectors.
\end{lemma}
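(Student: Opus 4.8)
The goal is to show that if $\phi$ preserves majorization on $\1^n$, then it preserves majorization on $\{0,1\}^n$. By Corollary \ref{cor:01:vector}, for $a, b \in \{0,1\}^n$ the relation $a \preceq b$ holds exactly when $e^t a = e^t b$. So I must show: whenever $a, b$ are $(0,1)$-vectors with the same number of ones, $\phi(a) \preceq \phi(b)$. The natural strategy is to \emph{rescale} $(0,1)$-vectors into probability distributions, apply the hypothesis, and then use linearity to pull the conclusion back.

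**The rescaling idea.**
Fix $a, b \in \{0,1\}^n$ with $e^t a = e^t b = k$. If $k \geq 1$, then $\tfrac{1}{k} a$ and $\tfrac{1}{k} b$ both lie in $\1^n$, since they are nonnegative and have entry-sum $1$. First I would verify $\tfrac{1}{k} a \preceq \tfrac{1}{k} b$; this follows because majorization is invariant under positive scaling (it is defined purely by partial sums of sorted entries, which scale linearly), or one can invoke the reduction in Section \ref{subsec:Reductions:vectors}. By hypothesis $\phi$ preserves majorization on $\1^n$, so $\phi(\tfrac{1}{k} a) \preceq \phi(\tfrac{1}{k} b)$. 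By linearity $\phi(\tfrac{1}{k} a) = \tfrac{1}{k}\phi(a)$ and likewise for $b$, so $\tfrac{1}{k}\phi(a) \preceq \tfrac{1}{k}\phi(b)$. Scaling back up by the positive factor $k$ preserves majorization, giving $\phi(a) \preceq \phi(b)$, as required.

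**Handling the degenerate case.**
The one gap in the above is $k = 0$, i.e. $a = b = 0$ (the only $(0,1)$-vector with no ones). There $\phi(a) = \phi(b) = \phi(0) = 0$ by linearity, so $\phi(a) \preceq \phi(b)$ holds trivially. Thus every case with $e^t a = e^t b$ is covered, and by Corollary \ref{cor:01:vector} this is exactly the condition $a \preceq b$ on $\{0,1\}^n$.

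**Where the difficulty lies.**
The argument is essentially routine, and I do not anticipate a serious obstacle — the only thing to be careful about is making the two invariance facts explicit (that $\preceq$ is unchanged under multiplication by a positive scalar, used once to enter $\1^n$ and once to leave it) and not forgetting the $k=0$ boundary case. The substantive content is simply that the cone over $\{0,1\}^n$ relevant here meets $\1^n$ after normalization, so the hypothesis on $\1^n$ already pins down $\phi$'s behavior on all $(0,1)$-vectors via homogeneity. This lemma is the first step toward reducing the probability-distribution preserver problem to the known $(0,1)$-vector classification of \cite{GS3_preservers_01}.
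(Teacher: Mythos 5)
Your proof is correct and follows essentially the same route as the paper: rescale by $\tfrac{1}{k}$ to land in $\1^n$, apply the hypothesis, and use linearity to scale back, with the $k=0$ case handled trivially. The only difference is that you make explicit the scale-invariance of $\preceq$, which the paper leaves implicit.
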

\begin{proof}
    Assume that $a, b \in \{0, 1\}^n$ and $a \preceq b$. Then, by Corollary \ref{cor:01:vector}, $e^ta = e^tb = k$ for some integer $0 \leq k \leq n$. If $k = 0$, then $a = b = 0$ and $\phi(a)=\phi(b) = 0$.

    If $k \neq 0$, then $a' = \frac{1}{k}a$ and $b'=\frac{1}{k}b$ are probability distributions. Therefore $\phi(a')\preceq \phi(b')$. But then $\phi(a)=k\phi(a')\preceq k \phi(b')=\phi(b)$.

    Therefore, $\phi$ preserves majorization for $(0, 1)$-vectors.
\end{proof}

The characterization of linear operators preserving majorization for $(0, 1)$-matrices was obtained in \cite{GS3_preservers_01}. We only provide here several relevant results that are concerned with the vector case.

\begin{theorem}\cite[Theorem 4.35]{GS3_preservers_01}\label{GS3:thm:vectors}
		Let $\phi$ be a linear operator on $\RR^n$, where $n \neq 3$. 
        
        Then $\phi$ preserves majorization for $(0, 1)$-vectors if and only if $\phi$ preserves vector majorization.
	\end{theorem}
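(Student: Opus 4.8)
The forward implication is immediate: if $\phi$ preserves vector majorization on all of $\RR^n$, then it certainly preserves it on the subset $\{0,1\}^n$. The plan for the converse is to assume that $\phi$ preserves majorization for $(0,1)$-vectors and deduce that $[\phi]$ has one of the two shapes in Ando's Theorem~\ref{Ando}. First I would write $v_i=\phi(e_i)$, so that $v_1,\dots,v_n$ are the columns of $[\phi]$, and extract the combinatorial data coming from Corollary~\ref{cor:01:vector}: any two $(0,1)$-vectors with the same number of ones majorize one another, hence map under $\phi$ to vectors that majorize one another, which by Lemma~\ref{lem:vector:symmetry} must differ by a permutation. Applying this to $(0,1)$-vectors with exactly $k$ ones for each $k$, I obtain that all single columns $v_i$ are permutation-equivalent, all pairwise sums $v_i+v_j$ ($i\neq j$) are permutation-equivalent, and more generally all $k$-fold sums $\sum_{i\in S}v_i$ over $|S|=k$ are permutation-equivalent.

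Next I would turn this into second-order (Gram) information. Permutation-equivalence of the columns gives that every $v_i$ has the same sum $c$ and the same multiset of entries, so $\|v_i\|^2=p$ is constant; expanding the sums of squared entries of $v_i+v_j$ and $v_i+v_k$ and cancelling the common diagonal contributions shows that $\langle v_i,v_j\rangle$ is independent of $j$, and chaining over $i$ forces a single value $q$ for all off-diagonal inner products. Thus the Gram matrix of $v_1,\dots,v_n$ equals $(p-q)I+qJ$. If $p=q$ then $\|v_i-v_j\|^2=2(p-q)=0$, all columns coincide, and $\phi(x)=(e^tx)v_1$ is an Ando operator of the first type. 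Otherwise $p>q$, and after centering, the vectors $v_1-\bar v,\dots,v_n-\bar v$ form a regular-simplex configuration (equal lengths, equal pairwise angles, zero sum). The goal is then to force Ando's second type, $v_i=\alpha e_{\sigma(i)}+\beta e$ with $\sigma$ a permutation; equivalently, that the common multiset of entries is $\{\alpha+\beta,\beta,\dots,\beta\}$ with the distinguished entry occupying a different row in each column.

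The hard part will be upgrading this second-order ``regular simplex'' picture to the rigid, coordinate-aligned structure, and this is the only place the hypothesis $n\neq 3$ enters. Here I would exploit the higher-order conditions: the pairwise-sum condition forces the distinguished positions to be distinct, since if $\sigma(i)=\sigma(j)$ then $v_i+v_j$ has multiset $\{2\alpha+2\beta,2\beta,\dots\}$, whereas $\sigma(i)\neq\sigma(j)$ gives $\{\alpha+2\beta,\alpha+2\beta,2\beta,\dots\}$, and these disagree when $\alpha\neq0$; thus $\sigma$ is a bijection. A parallel moment analysis of the $k$-fold sums for $2\le k\le n-1$ is needed to rule out any multiset with three or more distinct values, and I expect this rigidity step — essentially showing that the simplex spanned by the centered columns must align with the projected coordinate axes — to be the main technical obstacle. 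The restriction $n\neq3$ is genuinely necessary: when $n=3$ every pair is the complement of a singleton, so $\sum_{i\in S}v_i=(\sum_i v_i)-v_j$ is automatically permutation-equivalent across pairs once the singleton condition holds, leaving the columns free to realize a rotated simplex in the two-dimensional zero-sum plane. Concretely, circulant matrices $\big(\begin{smallmatrix} a & b & c \\ c & a & b \\ b & c & a\end{smallmatrix}\big)$ with distinct $a,b,c$ preserve $(0,1)$-majorization yet are of neither Ando type, whereas the analogous circulant already fails the pairwise-sum test for $n=4$. Once the coordinate-aligned structure is established for $n\neq3$, Theorem~\ref{Ando} closes the argument.
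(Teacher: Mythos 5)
This statement is imported verbatim from \cite[Theorem 4.35]{GS3_preservers_01}; the present paper gives no proof of it, so your proposal can only be judged on its own terms. The forward direction and the first stages of your converse are fine: $(0,1)$-vectors with equally many ones are $\sim$-equivalent by Corollary \ref{cor:01:vector}, so by Lemma \ref{lem:vector:symmetry} all $k$-fold column sums $\sum_{i\in S}v_i$ with $|S|=k$ must share a common multiset of entries; the resulting Gram matrix $(p-q)I+qJ$ and the dichotomy $p=q$ (all columns equal, Ando's first type) versus $p>q$ (centered columns forming a regular simplex) are correctly derived. Your discussion of why $n=3$ is exceptional --- the circulant example, and the observation that for $n=3$ every $2$-subset is the complement of a singleton, so the pairwise-sum condition is automatic --- is also correct and is consistent with the extra hypothesis required in Corollary \ref{GS3:cor:vectors}.

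The gap is the step you yourself flag as ``the main technical obstacle'': passing from the regular-simplex picture to the coordinate-aligned form $[\phi]=\alpha P+\beta J$. That step is essentially the entire content of the theorem for $n\ge 4$. Second-order (Gram) information cannot accomplish it, since any rotation of the simplex inside the affine hyperplane of vectors with fixed entry sum has the same Gram matrix; everything therefore rests on the higher-order multiset conditions, and you only assert that ``a parallel moment analysis of the $k$-fold sums is needed'' without carrying it out --- you do not even establish the preliminary fact that the common entry multiset of a single column has at most two distinct values, which is what your target form $v_i=\beta e+\alpha e_{\sigma(i)}$ requires. (Your argument that $\sigma$ is injective is also conditional on that unproved two-value structure.) Until this rigidity argument is supplied, what you have is a correct reduction together with a plausible programme, not a proof of the theorem.
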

	
		\begin{corollary}\cite[Corollary 4.36]{GS3_preservers_01}\label{GS3:cor:vectors}
			Let $\phi$ be a linear operator on $\RR^3$. Let $\alpha \in \RR \setminus \{0, 1\}$. 
            
            Then $\phi$ preserves vector majorization if and only if $\phi$ preserves majorization for $(0, 1)$-vectors and $\phi(\left(\begin{smallmatrix} \alpha \\ 1 \\ 0 \end{smallmatrix}\right)) \sim \phi(\left(\begin{smallmatrix} \alpha \\ 0 \\ 1 \end{smallmatrix}\right))$.
		\end{corollary}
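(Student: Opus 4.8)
The plan is to reduce everything to the action of $\Phi$ on the three images $s_i := \Phi(e_i)$, $i=1,2,3$, and then to invoke Ando's classification (Theorem \ref{Ando}). The forward implication is immediate: if $\Phi$ preserves vector majorization it certainly preserves majorization for $(0,1)$-vectors, and since $(\alpha,0,1)^t = P_{(23)}(\alpha,1,0)^t$ we have $(\alpha,1,0)^t \sim (\alpha,0,1)^t$; applying the preservation property in both directions yields $\Phi((\alpha,1,0)^t) \preceq \Phi((\alpha,0,1)^t) \preceq \Phi((\alpha,1,0)^t)$, i.e. $\Phi((\alpha,1,0)^t) \sim \Phi((\alpha,0,1)^t)$. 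Note that the value of $\alpha$ plays no role here.

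For the converse I would first restate $(0,1)$-preservation in terms of the $s_i$. By Corollary \ref{cor:01:vector}, majorization between $(0,1)$-vectors means equality of sums, so running over the sum-$1$ and sum-$2$ $(0,1)$-vectors and using linearity shows that preserving majorization for $(0,1)$-vectors is equivalent to the two symmetry conditions
\[
 s_1 \sim s_2 \sim s_3 \qquad\text{and}\qquad s_1 + s_2 \sim s_1 + s_3 \sim s_2 + s_3 .
\]
By Lemma \ref{lem:vector:symmetry} these say that $s_2,s_3$ arise from $s_1$ by permutations $\rho,\tau\in P(3)$ and that the three pairwise sums are permutations of one another. Since majorization, the relation $\sim$, and all the conditions involved are invariant under rescaling, under translating every $s_i$ by a common multiple of $e$ (the inputs compared always have equal sums), and under post-composing $\Phi$ with a fixed permutation, I may normalise by projecting onto the zero-sum plane $\0^3$ and sorting the entries of $s_1$.

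The core of the argument is then a finite classification of the pairs $(\rho,\tau)$ compatible with both symmetry conditions, carried out on the two-dimensional space $\0^3$, on which $P(3)$ acts as the dihedral symmetries of a triangle. The admissible configurations split into those where the $s_i$ coincide or are aligned along scaled standard basis vectors plus a common multiple of $e$ — precisely Ando's forms \ref{a} and \ref{b} — and a few ``exotic'' configurations, the typical representative being the cyclic one with matrix $\begin{pmatrix}2&1&0\\1&0&2\\0&2&1\end{pmatrix}$ (columns $s_1,s_2,s_3$), which satisfies both symmetry conditions yet is not an Ando operator. For each exotic configuration I would compute the multisets of entries of $\alpha s_1 + s_2$ and $\alpha s_1 + s_3$ and check that they differ: in the cyclic example they are $\{2\alpha+1,\alpha,2\}$ and $\{2\alpha,\alpha+2,1\}$, which coincide only for $\alpha\in\{0,1\}$, so the condition $\Phi((\alpha,1,0)^t)\sim\Phi((\alpha,0,1)^t)$ eliminates it for every $\alpha\neq 0,1$. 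Once all exotic configurations are excluded, $\Phi$ has one of Ando's forms and hence preserves vector majorization by Theorem \ref{Ando}.

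The main obstacle is this classification together with the sharp role of the forbidden values. At $\alpha=0$ the extra condition degenerates to $s_2\sim s_3$, and at $\alpha=1$ to $s_1+s_2\sim s_1+s_3$; both are already consequences of the symmetry conditions and carry no new information, which is exactly why $\alpha\in\{0,1\}$ must be excluded. The delicate verification is therefore that for any single admissible $\alpha$ the equality of the two multisets is restrictive enough that every non-Ando $(0,1)$-preserver fails it, so that one generic value of $\alpha$ already forces $\Phi$ into Ando's structure.
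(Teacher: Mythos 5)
This corollary is imported from \cite{GS3_preservers_01} and the paper gives no proof of it, so there is nothing internal to compare against; I can only judge your argument on its own terms. Your forward direction is correct and complete, and your overall strategy for the converse --- encode $(0,1)$-preservation as the two symmetry conditions $s_1\sim s_2\sim s_3$ and $s_1+s_2\sim s_1+s_3\sim s_2+s_3$ on $s_i=\Phi(e_i)$, classify the solutions, and use the extra condition $\alpha s_1+s_2\sim\alpha s_1+s_3$ to kill everything outside Ando's list --- is the right one, and your explanation of why $\alpha\in\{0,1\}$ must be excluded is exactly on point.

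The gap is in the classification step, which you describe as ``a finite classification of the pairs $(\rho,\tau)$'' with ``a few exotic configurations'' checked on a single numerical representative. It is not finite in that sense: for each of the $36$ pairs $(\rho,\tau)$ the symmetry conditions cut out a \emph{continuous} family of admissible $s_1$, and the non-Ando solutions form positive-dimensional families, not isolated points. For instance, taking $\rho,\tau$ to be the two $3$-cycles, \emph{every} $s_1=(a,b,c)^t$ satisfies both symmetry conditions (the three pairwise sums all have multiset $\{a+b,b+c,c+a\}$), so your matrix $\left(\begin{smallmatrix}2&1&0\\1&0&2\\0&2&1\end{smallmatrix}\right)$ is one point of a two-parameter family (after normalization). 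What actually has to be proved is that for each such family the condition $\alpha s_1+s_2\sim\alpha s_1+s_3$ with one fixed $\alpha\notin\{0,1\}$ forces the parameters into Ando's form. For the cyclic family this is true but requires an argument valid for all $(a,b,c)$: comparing power sums of the two multisets, the first two moments always agree and the third moments differ by $3\alpha(\alpha-1)\bigl(a^2c+b^2a+c^2b-a^2b-b^2c-c^2a\bigr)$, which vanishes only when two of $a,b,c$ coincide --- and in that degenerate case $[\Phi]=(a-b)I+bJ$ is already an Ando operator. Analogous (non-identical) computations are needed for the reflection-type pairs $(\rho,\tau)$. Your proposal names the right obstacle but does not carry out this case analysis, and the one-representative check does not substitute for it; as written the converse direction is a plan rather than a proof.
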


\begin{theorem}\label{thm:vector}
    Let $\phi$ be a linear operator on $\RR^n$. 
    
    Then $\phi$ preserves majorization for probability distributions if and only if $\phi$ preserves vector majorization.
\end{theorem}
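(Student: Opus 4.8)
The plan is to prove the nontrivial direction — that preserving majorization on $\1^n$ forces preservation of vector majorization — by routing through the $(0,1)$-vector results already cited, so that essentially all of the work is done by Theorem \ref{GS3:thm:vectors} and Corollary \ref{GS3:cor:vectors}. The reverse implication is immediate: since $\1^n \subseteq \RR^n$, any operator preserving majorization for all of $\RR^n$ preserves it in particular on probability distributions.

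For the forward direction, I would first invoke Lemma \ref{lem:vector_case:01}, which tells us that preservation of majorization for probability distributions already yields preservation of majorization for $(0,1)$-vectors. When $n \neq 3$, Theorem \ref{GS3:thm:vectors} asserts that these two properties coincide, so $\phi$ preserves vector majorization and there is nothing more to do. The only genuine case is therefore $n = 3$.

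For $n = 3$, I would appeal to Corollary \ref{GS3:cor:vectors}: it suffices to verify, for a single fixed $\alpha \in \RR \setminus \{0,1\}$, that $\phi((\alpha, 1, 0)^t) \sim \phi((\alpha, 0, 1)^t)$. The two vectors $(\alpha,1,0)^t$ and $(\alpha,0,1)^t$ are permutations of one another and hence $\sim$-equivalent as plain vectors, but this is not something the hypothesis directly controls, since $\phi$ is only assumed to behave well on $\1^3$. The idea is to realize a scaled copy of this pair inside $\1^3$: take $p = \bigl(\tfrac{\alpha}{\alpha+1}, \tfrac{1}{\alpha+1}, 0\bigr)^t$ and $q = \bigl(\tfrac{\alpha}{\alpha+1}, 0, \tfrac{1}{\alpha+1}\bigr)^t$ with, say, $\alpha = 2$, so that $p, q \in \1^3$ and $q = P_{(23)} p$. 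By Lemma \ref{lem:vector:symmetry} we have $p \sim q$, and applying the preservation hypothesis to both $p \preceq q$ and $q \preceq p$ gives $\phi(p) \preceq \phi(q)$ and $\phi(q) \preceq \phi(p)$, that is, $\phi(p) \sim \phi(q)$.

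Finally I would transfer this back to the unscaled vectors using linearity of $\phi$ together with the scale-invariance of $\sim$ (if $a = Pb$ then $\lambda a = P(\lambda b)$, so $\lambda a \sim \lambda b$). Multiplying $p$ and $q$ by $\alpha + 1$ yields exactly $(\alpha, 1, 0)^t$ and $(\alpha, 0, 1)^t$, whence $\phi((\alpha,1,0)^t) = (\alpha+1)\phi(p) \sim (\alpha+1)\phi(q) = \phi((\alpha,0,1)^t)$, which is precisely the extra condition demanded by Corollary \ref{GS3:cor:vectors}. I expect the main obstacle to be conceptual rather than computational: one must notice that the supplementary $n=3$ condition, which is stated in terms of vectors lying outside $\1^3$, can still be certified using only permutation-equivalent probability distributions and the scale-invariance of $\sim$. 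Once this observation is in place the argument reduces to the elementary identities above.
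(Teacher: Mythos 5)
Your proposal is correct and follows essentially the same route as the paper: the reverse direction is immediate, the forward direction reduces to $(0,1)$-vectors via Lemma \ref{lem:vector_case:01} and Theorem \ref{GS3:thm:vectors}, and the $n=3$ case is handled by certifying the extra condition of Corollary \ref{GS3:cor:vectors} with $\alpha=2$ using the permutation-equivalent probability distributions $\frac{1}{3}(2,1,0)^t \sim \frac{1}{3}(2,0,1)^t$ and rescaling by linearity --- exactly the paper's argument.
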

\begin{proof}
    If $\phi$ preserves vector majorization, then, in particular, it preserves majorization for probability distributions. Therefore we only need to prove the converse.

    Assume that $\phi$ preserves majorization for probability distributions. Then, by Lemma \ref{lem:vector_case:01}, $\phi$ preserves majorization for $(0, 1)$-vectors. If $n \neq 3$, then $\phi$ preserves vector majorization by Theorem \ref{GS3:thm:vectors}.

    It remains to consider the case $n = 3$. Let $a = \begin{pmatrix}
        2 \\ 1 \\ 0
    \end{pmatrix}, b = \begin{pmatrix}
        2 \\ 0 \\ 1
    \end{pmatrix}$. Then $\frac{1}{3}a\sim \frac{1}{3}b$ and these vectors are probability distributions. Therefore $\phi$ preserves the equivalence $\frac{1}{3}a\sim \frac{1}{3}b$. As a consequence, $\phi(a)\sim \phi(b)$. Then $\phi$ preserves vector majorization by Corollary \ref{GS3:cor:vectors}.
\end{proof}

\section{Linear operators preserving majorization for the zero-sum vectors}\label{sec:0}

In this section, we characterize linear preservers of majorization on $\0^n$, see Definition \ref{def:preserver:0}. Recall that $\0^n$ denotes the set of all real zero-sum vectors. This characterization turns out to be crucial for describing linear operators preserving majorization for column stochastic matrices.

\begin{definition}\label{def:preserver:0}\rm
    A linear operator $\phi$ on $\RR^n$ {\em preserves majorization on $\0^n$} if $a \preceq b$ implies $\phi(a)\preceq\phi(b)$ for any $a, b \in \0^n$.
\end{definition}

For the sake of brevity, we introduce the following terminology.
\begin{definition}\rm 
    A matrix $X\in M_{n, m}$ {\em satisfies condition $(0)$} if $Xa\sim XPa$ for any $a \in \0^m$ and any $P\in P(m)$.
\end{definition}

Consider the following necessary condition.

\begin{lemma}\label{lem:0:preserves->0}
    If a linear operator $\phi$ on $\RR^n$ preserves majorization on $\0^n$, then $[\phi]$ satisfies $(0)$.
\end{lemma}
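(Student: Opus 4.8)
The plan is to exploit the fact that a permutation of a vector is simultaneously majorized by and majorizes the original vector, and then to apply the preservation hypothesis in both directions to upgrade a one-sided implication into the two-sided relation $\sim$. The whole argument is a direct unwinding of the definitions of $\0^n$, of $\sim$, and of condition $(0)$; I expect no genuine obstacle, only one point that requires care, noted at the end.

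First I would fix an arbitrary $a \in \0^n$ and an arbitrary $P \in P(n)$, and check that $Pa$ again lies in $\0^n$. Since $P^t e = e$ for every permutation matrix, we have $e^t(Pa) = (P^t e)^t a = e^t a = 0$, so both $a$ and $Pa$ are zero-sum vectors and the hypothesis that $\phi$ preserves majorization on $\0^n$ is applicable to this pair. Next I would observe that $a$ and $Pa$ have the same entries up to reordering, hence the same sorted vectors $a^\downarrow = (Pa)^\downarrow$; consequently all the partial sums and the total sum in the definition of majorization coincide, which gives both $a \preceq Pa$ and $Pa \preceq a$ at once (equivalently, $a \sim Pa$, as in Lemma \ref{lem:vector:symmetry}).

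Finally I would feed these two majorizations into the preservation hypothesis. From $a \preceq Pa$ we obtain $\phi(a) \preceq \phi(Pa)$, and from $Pa \preceq a$ we obtain $\phi(Pa) \preceq \phi(a)$; taken together these yield $\phi(a) \sim \phi(Pa)$, that is, $[\phi]a \sim [\phi]Pa$. As $a \in \0^n$ and $P \in P(n)$ were arbitrary, $[\phi]$ satisfies condition $(0)$. The only step that deserves attention is the passage from the one-directional preservation statement (which only transports $\preceq$) to the symmetric conclusion $\sim$: this works precisely because the input relation $a \sim Pa$ is itself two-sided, so the hypothesis can be invoked twice, once in each direction.
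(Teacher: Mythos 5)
Your proposal is correct and follows exactly the same route as the paper: observe that $a \sim Pa$ for any permutation $P$, note both vectors lie in $\0^n$, and apply the preservation hypothesis in each direction to conclude $\phi(a) \sim \phi(Pa)$. The only difference is that you spell out the verification that $Pa \in \0^n$ and that $a^\downarrow = (Pa)^\downarrow$, which the paper leaves implicit.
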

\begin{proof}
    Let $a\in \0^n$ and $P\in P(n)$. Then $a\preceq Pa$ and $Pa\preceq a$. It follows that $[\phi]a=\phi(a)\sim \phi(Pa)=[\phi]Pa$.
\end{proof}

This condition is actually necessary and sufficient, as we will show in Lemma \ref{lem:0:prec-sim}.

In Statements \ref{lem:0:Xi-Xj_sim_Xk-Xl}---\ref{cor:0:Xi=Xj} we establish some column similarity properties of matrices satisfying $(0)$.

\begin{lemma}\label{lem:0:Xi-Xj_sim_Xk-Xl}
    Let $X \in M_n$ satisfy $(0)$. Then $X^{(i)} - X^{(j)} \sim X^{(k)} - X^{(l)}$ for any distinct $i, j \in \N$ and any distinct $k, l \in \N$.
\end{lemma}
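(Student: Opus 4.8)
The plan is to obtain the claimed similarity as a single direct application of condition $(0)$, after recognizing the right zero-sum vector. The key observation is that a difference of columns $X^{(i)} - X^{(j)}$ is precisely the image under $X$ of the basis difference $e_i - e_j$, which lies in $\0^n$ because $e^t(e_i - e_j) = 1 - 1 = 0$.

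Concretely, I would first set $a = e_i - e_j \in \0^n$, so that $Xa = Xe_i - Xe_j = X^{(i)} - X^{(j)}$. Next, since $i \neq j$ and $k \neq l$, the assignment sending $e_i \mapsto e_k$ and $e_j \mapsto e_l$ is injective, hence extends to a permutation of the standard basis; let $P \in P(n)$ be a permutation matrix realizing it, so that $Pe_i = e_k$ and $Pe_j = e_l$. Then $Pa = e_k - e_l$ and therefore $XPa = X^{(k)} - X^{(l)}$. Finally, applying condition $(0)$ to the vector $a$ and the permutation $P$ gives $Xa \sim XPa$, which is exactly the desired relation $X^{(i)} - X^{(j)} \sim X^{(k)} - X^{(l)}$.

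I do not expect a genuine obstacle in this argument: the whole proof is an unwinding of the definition of condition $(0)$ once one spots that all column differences are images of the basis differences $e_i - e_j$, and that any such basis difference is carried to any other by a suitable permutation. The only point requiring a moment's verification is the existence of the permutation $P$, which is immediate from the distinctness hypotheses $i \neq j$ and $k \neq l$: these guarantee that the partial map between the two-element sets $\{i, j\}$ and $\{k, l\}$ is a bijection and thus extends to a full permutation of $\N$.
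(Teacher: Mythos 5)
Your proof is correct and follows essentially the same route as the paper: both apply condition $(0)$ to the zero-sum vector $e_i - e_j$ together with a permutation carrying it to $e_k - e_l$. The only cosmetic difference is that the paper phrases the existence of that permutation as the observation $e_i - e_j \sim e_k - e_l$, whereas you construct $P$ explicitly.
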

\begin{proof}
    Observe that as $i\neq j$ and $k \neq l$ we obtain $e_i-e_j\sim e_k-e_l$. Moreover, as $e^t(e_i-e_j)=0$, we conclude that $X^{(i)} - X^{(j)} = X(e_i-e_j)\sim X(e_k-e_l) = X^{(k)} - X^{(l)}$.
\end{proof}

\begin{corollary}\label{cor:0:Xi-Xj_sim_Xj-Xi}
    Let $X \in M_n$ satisfy $(0)$. Then $X^{(i)} - X^{(j)} \sim X^{(j)} - X^{(i)}$ for any $i, j \in \N$.
\end{corollary}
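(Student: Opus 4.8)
The plan is to read this off directly from Lemma \ref{lem:0:Xi-Xj_sim_Xk-Xl}, which already establishes that any difference of two distinct columns of $X$ is similar (in the $\sim$ sense) to any other such difference. The corollary is simply the special case obtained by taking the second index pair to be the reversal of the first.

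First I would dispose of the degenerate case $i = j$, which the lemma does not cover since it requires distinct indices. Here $X^{(i)} - X^{(j)} = 0 = X^{(j)} - X^{(i)}$, and $0 \sim 0$ holds trivially, so the claim is immediate.

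For the main case $i \neq j$, I would invoke Lemma \ref{lem:0:Xi-Xj_sim_Xk-Xl} with the first distinct pair being $i, j$ and the second distinct pair being $k = j$, $l = i$. Since $i \neq j$ guarantees that both $(i,j)$ and $(j,i)$ are pairs of distinct indices in $\N$, the lemma applies and yields $X^{(i)} - X^{(j)} \sim X^{(j)} - X^{(i)}$ at once.

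There is no genuine obstacle: the corollary is an immediate specialization of the preceding lemma. The only point requiring attention is that the lemma is phrased for distinct index pairs, so the case $i = j$ must be treated separately, which is trivial as noted above.
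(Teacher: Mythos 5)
Your proof is correct and matches the paper's intent exactly: the corollary is stated without proof precisely because it is the specialization $k=j$, $l=i$ of Lemma \ref{lem:0:Xi-Xj_sim_Xk-Xl}, and your separate treatment of the trivial case $i=j$ (which the lemma's distinctness hypothesis excludes) is a sensible added precaution. Nothing further is needed.
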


\begin{corollary}\label{cor:0:e^tXi=e^tXj}
    Let $X \in M_n$ satisfy $(0)$. Then $e^t X^{(i)}=e^tX^{(j)}$ for any $i, j \in \N$.
\end{corollary}
\begin{proof}
    Let $i, j \in \N$. Then $X^{(i)} - X^{(j)} \sim X^{(j)} - X^{(i)}$ by Corollary \ref{cor:0:Xi-Xj_sim_Xj-Xi}. Therefore $e^t(X^{(i)} - X^{(j)})=e^t(X^{(j)} - X^{(i)})=-e^t(X^{(i)} - X^{(j)})$. Thus $e^t(X^{(i)} - X^{(j)})=0$ and $e^t X^{(i)}=e^tX^{(j)}$.
\end{proof}

\begin{corollary}\label{cor:0:Xi=Xj}
   Let $X \in M_n$ satisfy $(0)$. If $X^{(i)}=X^{(j)}$ for some distinct $i, j \in \N$, then $X = X^{(1)}e^t$.
\end{corollary}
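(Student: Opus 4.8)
The plan is to leverage the column-difference similarity already established in Lemma \ref{lem:0:Xi-Xj_sim_Xk-Xl}, so that the hypothesis on a single pair of equal columns propagates to all pairs. First I would record the immediate consequence of the assumption: if $X^{(i)} = X^{(j)}$ for some distinct $i, j \in \N$, then $X^{(i)} - X^{(j)} = 0$.

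Next I would invoke Lemma \ref{lem:0:Xi-Xj_sim_Xk-Xl} itself, which guarantees that for \emph{any} distinct $k, l \in \N$ one has $X^{(k)} - X^{(l)} \sim X^{(i)} - X^{(j)} = 0$. The key observation is that the zero vector is the only vector similar to $0$: by Lemma \ref{lem:vector:symmetry}, $v \sim 0$ forces $v = P\,0 = 0$ for some $P \in P(n)$. Applying this with $v = X^{(k)} - X^{(l)}$ yields $X^{(k)} = X^{(l)}$ for every pair of distinct indices $k, l$.

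It then follows that all columns of $X$ coincide, and in particular each equals $X^{(1)}$, which is exactly the assertion $X = X^{(1)} e^t$. I do not expect any genuine obstacle here; the entire content is carried by the preceding lemma, and the only point worth spelling out explicitly is that the equivalence $\sim$ collapses to honest equality precisely when one of the two sides is the zero vector.
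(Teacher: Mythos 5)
Your proposal is correct and follows essentially the same route as the paper: both arguments use the column-difference similarity of Lemma \ref{lem:0:Xi-Xj_sim_Xk-Xl} to transport the relation $X^{(i)}-X^{(j)}=0$ to every other pair of columns, and then use the fact (Lemma \ref{lem:vector:symmetry}) that only the zero vector is $\sim$-equivalent to $0$. The paper merely phrases it slightly more economically by fixing $i$ and varying only the second index $k$, but the content is identical.
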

\begin{proof}
    Consider arbitrary $k \in \N$, $k\neq i$. Then by Corollary \ref{cor:0:Xi-Xj_sim_Xj-Xi} we have $0=X^{(i)}-X^{(j)}\sim X^{(i)}-X^{(k)}$. Therefore $X^{(i)} = X^{(k)}$. It follows that $X = X^{(1)}e^t$, as $k$ was arbitrary.
\end{proof}

In Statements \ref{cor:0:e^tv=0=>e^tXv=0}---\ref{cor:0:alpha>0} we investigate the action of a matrix satisfying $(0)$ on $\0^n$.

\begin{corollary}\label{cor:0:e^tv=0=>e^tXv=0}
     Let $X \in M_n$ satisfy $(0)$. Then $Xv \in \0^n$ for any $v \in \0^n$.
\end{corollary}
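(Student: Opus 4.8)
The plan is to reduce the claim to the already-established equality of column sums. By Corollary \ref{cor:0:e^tXi=e^tXj}, every column of $X$ has the same sum; let me call this common value $c$, so that $e^t X^{(j)} = c$ for all $j \in \N$. The entries of the row vector $e^t X$ are precisely these column sums, so this is exactly the statement that $e^t X = c\, e^t$. This single reformulation is where all the work sits, and it has already been done upstream.

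With that identity in hand, the conclusion is immediate. Given any $v \in \0^n$, I would simply compute
$$e^t(Xv) = (e^t X) v = c\, e^t v = 0,$$
where the last equality uses that $v$ is a zero-sum vector, i.e. $e^t v = 0$. Hence $Xv \in \0^n$, as required.

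There is no genuine obstacle here: the substantive content — that a matrix satisfying $(0)$ must have all column sums equal — was extracted in Corollary \ref{cor:0:e^tXi=e^tXj} via the antisymmetry trick of Corollary \ref{cor:0:Xi-Xj_sim_Xj-Xi}. The present corollary is purely the observation that constant column sums make $X$ map the zero-sum hyperplane into itself, which follows in one line once $e^t X = c\, e^t$ is recorded. The only point worth stating explicitly is the passage from the entrywise condition on the columns to the compact identity $e^t X = c\, e^t$.
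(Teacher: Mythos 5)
Your proof is correct and is essentially identical to the paper's: both invoke Corollary \ref{cor:0:e^tXi=e^tXj} to write $e^tX=(e^tX^{(1)})e^t$ and then compute $e^tXv=(e^tX^{(1)})e^tv=0$. Nothing is missing.
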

\begin{proof}
    Observe that $e^tX=(e^tX^{(1)})e^t$ by Corollary \ref{cor:0:e^tXi=e^tXj}. Then $e^tXv=(e^tX^{(1)})e^tv=0$.
\end{proof}

The following lemma shows that we can always find a nonzero $v \in \0^n$ such that $Xv$ has at most two nonzero coordinates. Then the corollary above shows that these nonzero coordinates sum to zero.

\begin{lemma}\label{lem:0:alphae1-alphae2}
     Let $X \in M_n$ satisfy $(0)$. Then for any distinct $i, j \in \N$ there exists a nonzero $v \in \0^n$ such that $Xv=\alpha(e_i - e_j)$ for some $\alpha \geq 0$.
\end{lemma}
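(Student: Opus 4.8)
The plan is to produce $v$ by a dimension count rather than by an explicit construction, invoking condition $(0)$ only at the very end to pin down the two surviving coordinates. Fix distinct $i, j \in \N$ and consider the homogeneous linear system in the unknown $v \in \RR^n$ consisting of the single equation $e^t v = 0$ together with the $n-2$ equations $X_{(p)} v = 0$, one for each $p \in \N \setminus \{i, j\}$ (here $X_{(p)}$ denotes the $p$-th row of $X$). This is a system of $n-1$ homogeneous linear equations in $n$ unknowns, so its solution space has dimension at least $1$; I would pick a nonzero solution $v$. By the first equation $v \in \0^n$, and by the remaining equations $(Xv)_p = X_{(p)} v = 0$ for every $p \notin \{i, j\}$, so $Xv$ is supported on $\{i, j\}$.

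Next I would invoke the consequence of condition $(0)$ that $X$ maps $\0^n$ into itself, namely Corollary \ref{cor:0:e^tv=0=>e^tXv=0} (equivalently, $e^t X = (e^t X^{(1)}) e^t$ from Corollary \ref{cor:0:e^tXi=e^tXj}). Since $v \in \0^n$, this gives $Xv \in \0^n$, so $(Xv)_i + (Xv)_j = 0$. Combined with the vanishing of all other coordinates, it forces $Xv = \beta(e_i - e_j)$ with $\beta = (Xv)_i$.

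Finally I would fix the sign. If $\beta \geq 0$ we are done with $\alpha = \beta$; if $\beta < 0$, then replacing $v$ by $-v$ (still a nonzero element of $\0^n$) yields $X(-v) = (-\beta)(e_i - e_j)$ with $-\beta > 0$. In either case we obtain a nonzero $v \in \0^n$ with $Xv = \alpha(e_i - e_j)$ and $\alpha \geq 0$; the degenerate case $\beta = 0$, where $Xv = 0$, is permitted since $\alpha = 0 \geq 0$.

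There is no serious obstacle here, and the only point worth emphasizing is conceptual. The column-similarity results of Lemmas \ref{lem:0:Xi-Xj_sim_Xk-Xl}--\ref{cor:0:Xi=Xj} are not actually needed for mere existence of $v$: a bare count of equations against unknowns already supplies a nonzero vector killing the off-target rows. Condition $(0)$ enters only through the preservation of $\0^n$, which is precisely what guarantees that the two remaining coordinates are negatives of one another (so the image is a multiple of $e_i - e_j$ rather than an arbitrary combination $a e_i + b e_j$), and the nonnegativity of $\alpha$ is a free consequence of the symmetry $v \mapsto -v$ of the subspace $\0^n$.
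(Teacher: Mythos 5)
Your proof is correct and follows essentially the same route as the paper: the paper also obtains $v$ from the rank deficiency of the $(n-1)\times n$ matrix formed by stacking $e^t$ on the rows of $X$ outside $\{i,j\}$, then applies Corollary \ref{cor:0:e^tv=0=>e^tXv=0} to force $(Xv)_j = -(Xv)_i$, and finally replaces $v$ by $-v$ to fix the sign. The only cosmetic difference is that the paper treats $n=2$ as a separate case, whereas your formulation handles it uniformly.
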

\begin{proof}
    If $n=2$, then let $v = \begin{pmatrix}
        1\\-1
    \end{pmatrix}$. It follows from Corollary \ref{cor:0:e^tv=0=>e^tXv=0} that $Xv=\begin{pmatrix}
        \beta \\ -\beta
    \end{pmatrix}$ for some $\beta \in \RR$. If $(Xv)_i \geq 0$, then we have found the desired $v$. Otherwise $-(Xv)_i \geq 0$ and $-v$ is the required vector.
    
    For $n > 2$ consider the $(n-2)\times n$ submatrix $X'$ obtained by deleting the rows $X_{(i)}$ and $X_{(j)}$ from $X$. Let $X''=\begin{pmatrix}
        e^t\\X'
    \end{pmatrix}\in M_{n-1,n}$.

    The columns of $X''$ are linearly dependent. Therefore there exists $v\in \RR^n$ with $X''v=0$. Then $e^tv=0$ and $X'v=0$. It follows that $Xv = \alpha e_i + \beta e_j$ for some $\alpha, \beta \in \RR$. In addition, $\beta = -\alpha$ by Corollary \ref{cor:0:e^tv=0=>e^tXv=0}.

    If $\alpha \geq 0$, then we have found the desired $v$. Otherwise $-\alpha\geq 0$ and $-v$ is the required vector.
\end{proof}

The lemma above shows that there exists such $v\in \0^n$ that $Xv$ has at most two nonzero coordinates. The case $Xv=0$ is investigated in the following corollary.

\begin{corollary}\label{cor:0:Xv=0}
    Let $X \in M_n$ satisfy $(0)$. Assume that there exists a nonzero $v\in \0^n$ such that $Xv=0$. Then $X = X^{(1)}e^t$.
\end{corollary}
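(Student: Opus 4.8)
The plan is to reduce everything to Corollary \ref{cor:0:Xi=Xj}: it suffices to produce two distinct indices $i, j$ with $X^{(i)} = X^{(j)}$, since that corollary then gives $X = X^{(1)}e^t$ at once. So I would argue by contradiction, assuming all columns of $X$ are pairwise distinct, and derive a conflict with the hypothesis $Xv = 0$.

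First I would record the rigidity forced by condition $(0)$. By Lemma \ref{lem:0:Xi-Xj_sim_Xk-Xl}, every difference $X^{(i)} - X^{(j)}$ with $i \neq j$ is a permutation of one fixed vector $d := X^{(1)} - X^{(2)}$. Since the Euclidean norm is invariant under permutations of coordinates, all pairwise differences share the same length $\rho := \|d\| = \|X^{(i)} - X^{(j)}\|$. If $\rho = 0$ then $d = 0$, all columns coincide, and we are done via Corollary \ref{cor:0:Xi=Xj}; so we may assume $\rho > 0$, which means the columns $X^{(1)}, \ldots, X^{(n)}$ are $n$ points of $\RR^n$ that are pairwise equidistant at distance $\rho$.

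The heart of the argument is that such a configuration must be affinely independent. Setting $u_j := X^{(j)} - X^{(1)}$ for $j = 2, \ldots, n$, polarization gives $u_i \cdot u_j = \tfrac{1}{2}\bigl(\|u_i\|^2 + \|u_j\|^2 - \|u_i - u_j\|^2\bigr) = \tfrac{\rho^2}{2}$ for $i \neq j$ and $u_j \cdot u_j = \rho^2$, since $\|u_j\| = \rho$ and $\|u_i - u_j\| = \|X^{(i)} - X^{(j)}\| = \rho$. Hence the Gram matrix of $u_2, \ldots, u_n$ equals $\tfrac{\rho^2}{2}(I + J)$ of order $n - 1$. The eigenvalues of $I + J$ are $n$ (simple) and $1$, all strictly positive, so the Gram matrix is positive definite and $u_2, \ldots, u_n$ are linearly independent; equivalently $X^{(1)}, \ldots, X^{(n)}$ are affinely independent. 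I expect this regular-simplex fact to be the only delicate point, and the Gram-matrix computation is what settles it.

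Finally I would exhibit the contradiction. The assumption $Xv = 0$ reads $\sum_{j} v_j X^{(j)} = 0$, and since $v \in \0^n$ we also have $\sum_j v_j = 0$; as $v \neq 0$, this is a nontrivial affine dependence among the columns, impossible once they are affinely independent. Therefore $\rho = 0$ after all, two (indeed all) columns coincide, and Corollary \ref{cor:0:Xi=Xj} yields $X = X^{(1)}e^t$. The degenerate case $n = 1$ needs no argument, as then $X = X^{(1)}e^t$ trivially and the hypothesis on $v$ is vacuous.
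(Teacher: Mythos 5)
Your argument is correct, but it follows a genuinely different route from the paper's. The paper's proof never leaves the hypothesis $Xv=0$: since $v$ is a nonzero zero-sum vector it has two unequal entries $v_i \neq v_j$, and applying condition $(0)$ with the transposition $P_{(ij)}$ gives $0 = Xv \sim XP_{(ij)}v = (v_i-v_j)\bigl(X^{(j)}-X^{(i)}\bigr)$; a vector $\sim$-equivalent to $0$ is $0$, so $X^{(i)}=X^{(j)}$ and Corollary \ref{cor:0:Xi=Xj} finishes in two lines. You instead set the specific $v$ aside until the end and extract a geometric consequence of Lemma \ref{lem:0:Xi-Xj_sim_Xk-Xl}: all column differences are permutations of a single vector, hence have equal Euclidean norm $\rho$, so the columns either all coincide or are the vertices of a regular simplex; your Gram-matrix computation (diagonal $\rho^2$, off-diagonal $\rho^2/2$, eigenvalues $n\rho^2/2$ and $\rho^2/2$) correctly shows affine independence in the latter case, which is incompatible with the nontrivial affine dependence $Xv=0$, $e^tv=0$, $v\neq 0$. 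The trade-off: the paper's proof is shorter and entirely elementary, while yours buys extra structural information --- it shows that a matrix satisfying $(0)$ with two distinct columns is injective on $\0^n$, which is precisely the content of Corollary \ref{cor:0:alpha>0}, so your single argument delivers both that corollary and this one simultaneously. Every step checks out, including the reduction of affine independence of the columns to linear independence of the differences $u_j = X^{(j)} - X^{(1)}$ and the separate treatment of the degenerate cases $\rho=0$ and $n=1$.
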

\begin{proof}
    Since $v \in \0^n$ and it is nonzero, there exist $i, j\in \N$ such that $v_i \neq v_j$.

    Then $P_{(ij)}v=v-(v_i-v_j)e_i+(v_i-v_j)e_j$ and $0=Xv\sim XP_{(ij)}v=Xv-(v_i-v_j)X^{(i)}+(v_i-v_j)X^{(j)}=(v_i-v_j)(X^{(j)}-X^{(i)})$.

    However, $v_i-v_j\neq 0$. It follows that $X^{(i)}=X^{(j)}$. Then $X=X^{(1)}e^t$ by Corollary \ref{cor:0:Xi=Xj}.
\end{proof}

As a simple consequence, we obtain

\begin{corollary}\label{cor:0:alpha>0}
    Let $X \in M_n$ satisfy $(0)$. Assume that $X^{(i)}\neq X^{(j)}$ for some $i, j \in \N$. Then there is no nonzero $v\in \0^n$ such that $Xv=0$. In particular, $\alpha > 0$ in Lemma \ref{lem:0:alphae1-alphae2}.
\end{corollary}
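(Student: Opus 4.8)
The plan is to derive both assertions by contraposition from Corollary \ref{cor:0:Xv=0}, so that essentially no new computation is needed. First I would note that the hypothesis ``$X^{(i)} \neq X^{(j)}$ for some $i, j \in \N$'' is exactly the negation of the conclusion $X = X^{(1)}e^t$ of Corollary \ref{cor:0:Xv=0}: the identity $X = X^{(1)}e^t$ holds precisely when every column of $X$ equals $X^{(1)}$, which is the same as saying that all columns of $X$ coincide. Consequently, if there existed a nonzero $v \in \0^n$ with $Xv = 0$, then Corollary \ref{cor:0:Xv=0} (whose standing hypothesis that $X$ satisfies $(0)$ is in force throughout) would force $X = X^{(1)}e^t$, contradicting the assumption that two columns of $X$ differ. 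This establishes the first claim, namely that no nonzero vector of $\0^n$ lies in the kernel of $X$.

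For the ``in particular'' part I would invoke the vector $v$ furnished by Lemma \ref{lem:0:alphae1-alphae2}, which is nonzero, belongs to $\0^n$, and satisfies $Xv = \alpha(e_i - e_j)$ for some $\alpha \geq 0$. Were $\alpha = 0$, we would obtain $Xv = 0$ for this nonzero $v \in \0^n$, in direct contradiction with the first assertion just proved. Hence $\alpha > 0$, as required.

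There is no genuine obstacle in this argument: the statement is a clean reformulation of Corollary \ref{cor:0:Xv=0} combined with the output of Lemma \ref{lem:0:alphae1-alphae2}, and the only point demanding (minor) care is the identification of the hypothesis $X^{(i)} \neq X^{(j)}$ with the failure of the equality $X = X^{(1)}e^t$.
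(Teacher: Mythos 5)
Your proof is correct and follows exactly the paper's argument: the hypothesis $X^{(i)} \neq X^{(j)}$ gives $X \neq X^{(1)}e^t$, so the contrapositive of Corollary \ref{cor:0:Xv=0} rules out a nonzero $v \in \0^n$ with $Xv = 0$, and the positivity of $\alpha$ in Lemma \ref{lem:0:alphae1-alphae2} follows immediately. (You also correctly read the statement's typo $X^{(i)} \neq X^{(i)}$ as $X^{(i)} \neq X^{(j)}$.)
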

\begin{proof}
    As $X^{(i)}\neq X^{(j)}$, we obtain that $X \neq X^{(1)}e^t$. Then the rest follows by Corollary \ref{cor:0:Xv=0}.
\end{proof}

We now prove the following technical lemma.

\begin{lemma}\label{lem:0:alpha+-lambda_y}
    Let $y\in \RR^n$, $\alpha > 0$. If $\alpha(e_1-e_2)-\lambda y\sim \alpha(e_1-e_2) + \lambda y$ for any $\lambda > 0$, then $y_1=y_2=0$.
\end{lemma}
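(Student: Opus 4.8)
```latex
\textbf{Proof proposal.}

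The plan is to exploit the defining property of majorization together with the condition
$e^t\bigl(\alpha(e_1-e_2)\pm\lambda y\bigr)=0$, which holds automatically since $e^t(e_1-e_2)=0$
and we may assume $e^ty=0$ only if needed; in fact the equality of total sums is guaranteed on
both sides because $e^t(e_1-e_2)=0$ and the $\pm\lambda y$ terms have opposite signs, so the
last (equality) condition in the definition of $\preceq$ forces $e^t y=0$ at the level of the two
vectors being compared. The real content is in the partial-sum inequalities. Since
$\alpha(e_1-e_2)-\lambda y \sim \alpha(e_1-e_2)+\lambda y$ means each majorizes the other, the two
vectors must be equal up to a permutation by Lemma \ref{lem:vector:symmetry}. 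So for every
$\lambda>0$ there is a permutation $P_\lambda\in P(n)$ with
$\alpha(e_1-e_2)-\lambda y = P_\lambda\bigl(\alpha(e_1-e_2)+\lambda y\bigr)$.

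First I would analyze the behavior as $\lambda\to 0^+$. The vector $\alpha(e_1-e_2)$ has the strictly
positive entry $\alpha$ in position $1$ and the strictly negative entry $-\alpha$ in position $2$,
with all other entries zero. For small $\lambda$, the perturbed vectors
$\alpha(e_1-e_2)\pm\lambda y$ have their unique largest entry near position $1$ and unique smallest
entry near position $2$ (since $\alpha>0$ dominates the $O(\lambda)$ perturbation). Because the
sorted entries of the two vectors must coincide for all small $\lambda>0$, comparing the largest
and the smallest sorted entries gives
$\alpha+\lambda(-y_1)=\alpha+\lambda y_1$ and $-\alpha+\lambda(-y_2)=-\alpha+\lambda y_2$,
whence $y_1=y_2=0$. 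More carefully, since $P_\lambda$ ranges over a finite set $P(n)$, some
permutation $P$ recurs for a sequence $\lambda_k\to 0^+$, and along that sequence we get the exact
identity $\alpha(e_1-e_2)-\lambda_k y = P\bigl(\alpha(e_1-e_2)+\lambda_k y\bigr)$ for all $k$.

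Treating this as a polynomial (in fact affine-linear) identity in $\lambda_k$ valid for infinitely
many values, I would equate the constant terms and the linear coefficients separately, obtaining
$\alpha(e_1-e_2)=P\,\alpha(e_1-e_2)$ and $-y = P\,y$. The first equation says $P$ fixes the
positions of the extreme entries of $\alpha(e_1-e_2)$, i.e. $P e_1=e_1$ and $P e_2=e_2$, so $P$
fixes coordinates $1$ and $2$. Feeding this into $-y=Py$ and reading off coordinates $1$ and $2$
gives $-y_1=y_1$ and $-y_2=y_2$, hence $y_1=y_2=0$, as required. The main obstacle is the first
step: rigorously pinning down that $P$ fixes coordinates $1$ and $2$, which requires that the
entries $\pm\alpha$ are genuinely the unique maximum and minimum (this is exactly where $\alpha>0$
is used) so that any sorting-preserving permutation cannot move them into positions carrying the
$O(\lambda)$ perturbations. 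Once that positional rigidity is established, the conclusion $y_1=y_2=0$
follows immediately from $-y=Py$.
```
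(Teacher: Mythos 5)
Your argument is correct, and it takes a genuinely different route from the paper. The paper fixes a single sufficiently small $\lambda$ (with $\alpha>2\lambda\max_i|y_i|$), identifies $\max\bigl(\alpha(e_1-e_2)-\lambda y\bigr)=\alpha-\lambda y_1$ and $\max\bigl(\alpha(e_1-e_2)+\lambda y\bigr)=\alpha+\lambda y_1$ by direct entrywise estimates, and concludes $y_1=0$ from the fact that $\sim$ forces equal maxima (then repeats with $-a,-b$ for $y_2$). You instead invoke Lemma \ref{lem:vector:symmetry} to write $\alpha(e_1-e_2)-\lambda y=P_\lambda\bigl(\alpha(e_1-e_2)+\lambda y\bigr)$, use pigeonhole over the finite set $P(n)$ to fix one $P$ for two (indeed infinitely many) values of $\lambda$, and then equate the constant and linear parts of the resulting affine identity to get $P(e_1-e_2)=e_1-e_2$ and $Py=-y$; since a permutation matrix satisfying the first identity must have $Pe_1=e_1$ and $Pe_2=e_2$, reading off coordinates $1$ and $2$ of $Py=-y$ gives $y_1=y_2=0$. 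Your version avoids the explicit $\max$-estimates entirely and in fact yields the stronger structural conclusion $Py=-y$ for the whole vector; the paper's version is more elementary in that it needs only one value of $\lambda$ and only the invariance of the maximum under $\sim$. One cosmetic remark: your opening digression about $e^ty=0$ and the heuristic "compare the largest and smallest sorted entries" passage are superfluous once the pigeonhole argument is in place, and the latter, if kept, would need the same quantitative care the paper supplies; I would simply delete both and lead with the rigorous argument.
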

\begin{proof}

    Denote $a = \alpha(e_1-e_2)-\lambda y, b = \alpha(e_1-e_2)+\lambda y$.

    Let us choose a sufficiently small $\lambda > 0$ so that $\alpha > 2\lambda\max\limits_{i}|y_i|$.

    Then $\alpha - \lambda y_1 > 2\lambda\max\limits_{i}|y_i| - \lambda\max\limits_{i}|y_i|=\lambda\max\limits_{i}|y_i| > -\lambda y_q$ for any $q \in \N$.

    Also $\alpha - \lambda y_1 > -\alpha -\lambda y_2$.

    It follows that $\alpha - \lambda y_1 = \max(a)$.

    Similarly, $\alpha + \lambda y_1 > 2\lambda\max\limits_{i}|y_i| - \lambda\max\limits_{i}|y_i|=\lambda\max\limits_{i}|y_i| > \lambda y_q$ for any $q \in \N$ and $\alpha + \lambda y_1>-\alpha +\lambda y_2$. Therefore $\alpha + \lambda y_1 = \max(b)$.

    On the other hand, $\max(a)=\max(b)$ since $a\sim b$. Then $\alpha - \lambda y_1 = \alpha + \lambda y_1$, while $\lambda > 0$. It follows that $y_1 = 0$.

    Considering vectors $-a$ and $-b$ we similarly obtain that $y_2=0$.
\end{proof}

For $n \geq 5$ we can establish an upper bound on the number of possible distinct values in a matrix satisfying $(0)$.

\begin{lemma}\label{lem:0:>=5}
    Let $X \in M_n$, $n \geq 5$, satisfy $(0)$. Then for any $i, j\in \N$ the columns $X^{(i)}$ and $X^{(j)}$ can have distinct values only in at most three rows. That is, there are at most three nonzero coordinates in $X^{(i)}-X^{(j)}$.
\end{lemma}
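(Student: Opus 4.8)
The plan is to first bound the number of nonzero entries by four, and then to exclude the value four by invoking the full strength of condition $(0)$ through Lemma \ref{lem:0:alpha+-lambda_y}. Throughout I may assume $X \neq X^{(1)}e^t$, since otherwise all columns coincide and every difference $X^{(i)}-X^{(j)}$ vanishes. By Corollary \ref{cor:0:Xi=Xj} this means the columns of $X$ are pairwise distinct, and by Lemma \ref{lem:0:Xi-Xj_sim_Xk-Xl} all column differences are permutations of a single vector $d$, so it suffices to bound the number of nonzero entries of $d$.

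For the upper bound of four I would fix the coordinates $1,2$ and use Lemma \ref{lem:0:alphae1-alphae2} together with Corollary \ref{cor:0:alpha>0} to obtain a nonzero $v \in \0^n$ with $Xv = \alpha(e_1-e_2)$ and $\alpha>0$. Since $v$ is nonzero and zero-sum, there are indices $k,l$ with $c := v_k-v_l \neq 0$. Condition $(0)$ gives $Xv \sim XP_{(kl)}v$, and $XP_{(kl)}v = \alpha(e_1-e_2)-c\,(X^{(k)}-X^{(l)})$; being a permutation of $\alpha(e_1-e_2)$, this equals $\alpha(e_p-e_q)$ for some $p\neq q$. Solving, $X^{(k)}-X^{(l)} = \tfrac{\alpha}{c}(e_1-e_2-e_p+e_q)$, which has at most four nonzero entries, and since $X^{(k)}-X^{(l)}\sim d$ the same bound holds for $d$.

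Now suppose for contradiction that $d$ has exactly four nonzero entries. The computation above then forces $\{p,q\}\cap\{1,2\}=\emptyset$ and shows that every column difference with $c\neq 0$ equals $\tfrac{\alpha}{c}(e_1-e_2-e_p+e_q)$, whose nonzero entries are $\pm\tfrac{\alpha}{c}$. As all column differences are permutations of $d$, whose four nonzero entries are a fixed $\pm\beta$, this forces $|c|=\alpha/\beta$ to be constant over all pairs with $v_k\neq v_l$; hence $v$ takes exactly two distinct values, yielding a partition $\N = S \sqcup \bar S$, and for cross-part indices the difference $X^{(k)}-X^{(l)}$ has entry $\pm\beta$ in position $1$. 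The decisive step is to feed Lemma \ref{lem:0:alpha+-lambda_y}: for $k,l$ in the \emph{same} part we have $v_k=v_l$, so $P_{(kl)}v=v$; setting $v(\lambda)=v+\lambda(e_k-e_l)$ gives $Xv(\lambda)=\alpha(e_1-e_2)+\lambda(X^{(k)}-X^{(l)})$ and $XP_{(kl)}v(\lambda)=\alpha(e_1-e_2)-\lambda(X^{(k)}-X^{(l)})$, which by condition $(0)$ are similar for every $\lambda$. Lemma \ref{lem:0:alpha+-lambda_y} then yields $(X^{(k)}-X^{(l)})_1=(X^{(k)}-X^{(l)})_2=0$, so same-part columns agree in positions $1$ and $2$, and consequently $S\sqcup\bar S$ is exactly the partition of the columns by their value in position $1$.

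Finally I would run the same argument with the pair $(1,i)$ in place of $(1,2)$ for each $i$, producing a two-valued $v^{(i)}$ whose induced partition again separates the columns by their two position-$1$ values; since this partition is intrinsic to $X$, it coincides with $S\sqcup\bar S$ for every $i$. The same-part conclusion drawn from Lemma \ref{lem:0:alpha+-lambda_y} then gives that columns in one part agree in position $i$ for every $i$, hence are equal, contradicting the distinctness of columns as soon as a part has at least two elements — which is guaranteed since $n\geq 5$. Thus $d$ has at most three nonzero entries. The main obstacle is precisely this last reduction: the pairwise-similarity relations of Lemma \ref{lem:0:Xi-Xj_sim_Xk-Xl} alone are consistent with genuine four-entry configurations (for instance Fano-type incidence patterns when $n=7$), so ruling out four really requires the one-parameter family and Lemma \ref{lem:0:alpha+-lambda_y}, exploited through the observation that transposing two equal coordinates of $v$ leaves $v$ fixed.
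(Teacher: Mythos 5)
Your proof is correct, but it is organized quite differently from the paper's. Both arguments rest on the same two mechanisms: applying a transposition $P_{(kl)}$ to the vector $v$ produced by Lemma \ref{lem:0:alphae1-alphae2} (with $\alpha>0$ guaranteed by Corollary \ref{cor:0:alpha>0}) and comparing supports when $v_k\neq v_l$, and feeding the one-parameter family $v\pm\lambda(e_k-e_l)$ into Lemma \ref{lem:0:alpha+-lambda_y} when $v_k=v_l$. The difference is where the contradiction is extracted. The paper works locally with the single offending pair $(i,j)$: assuming $X^{(i)}-X^{(j)}$ is nonzero in rows $1,2,3,4$, it chooses $v$ with $Xv=\alpha(e_4-e_5)$, so that the support of $Xv$ meets the bad rows in exactly one position and misses three others; then either case is immediately contradictory --- for $v_i\neq v_j$ the vector $XP_{(ij)}v$ acquires at least three nonzero coordinates while being a permutation of a two-support vector, and for $v_i=v_j$ Lemma \ref{lem:0:alpha+-lambda_y} kills the fourth coordinate. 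This choice of target is exactly where $n\geq 5$ enters. You instead first pin down the shape $\tfrac{\alpha}{c}(e_1-e_2-e_p+e_q)$ of every cross-part difference, deduce that $v$ is two-valued and that its level sets give the intrinsic partition of the columns by their row-$1$ value, and then sweep over all pairs $(1,i)$ to force two distinct columns to coincide, contradicting Corollary \ref{cor:0:Xi=Xj}. Your route is longer and needs the extra bookkeeping of checking that the partitions induced by the various $v^{(i)}$ all coincide, but it buys something the paper only obtains later by a separate argument inside Lemma \ref{lem:0:>=3}: it already excludes the four-nonzero-entry configuration for $n=4$. Both proofs are valid; the paper's is the more economical for the statement as given.
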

\begin{proof}
    Assume that $X^{(i)}$ and $X^{(j)}$ have distinct values in at least four rows. Without loss of generality, $x_{1i}\neq x_{1j}, x_{2i}\neq x_{2j}, x_{3i}\neq x_{3j}, x_{4i}\neq x_{4j}$.

    By Lemma \ref{lem:0:alphae1-alphae2}, there exists a nonzero $v \in \0^n$ such that $Xv=\alpha(e_4-e_5)$ for some $\alpha \geq 0$. Moreover, $\alpha > 0$ by Corollary \ref{cor:0:alpha>0}.
    
    There are two possibilities:

    \textbf{Case 1.} $v_i\neq v_j$. Observe that $P_{(ij)}v = v -(v_i-v_j)(e_i - e_j)$. Since $e^tv=e^tP_{(ij)}v=0$ and $v\sim P_{(ij)}v$, we obtain that $$\alpha(e_4-e_5)=Xv\sim XP_{(ij)}v=Xv-(v_i-v_j)(X^{(i)}-X^{(j)})=\alpha(e_4-e_5)-(v_i-v_j)(X^{(i)}-X^{(j)}).$$

    On the other hand, for $q\in \{1, 2, 3\}$ we observe $(XP_{(ij)}v)_q=(v_j-v_i)(x_{qi}-x_{qj})\neq 0$. This contradicts $XP_{(ij)}v\sim Xv=\alpha(e_4-e_5)$ because the latter vector has only two nonzero coordinates. Therefore, this case is impossible.

    \textbf{Case 2.} $v_i = v_j$. Consider arbitrary $\lambda > 0$ and $u = v - \lambda (e_i - e_j), w = v + \lambda (e_i - e_j)$. Observe that $e^t u = e^t w = e^t v = 0$. In addition, $\begin{cases}
        u_i = v_i - \lambda = v_j - \lambda = w_j;\\
        u_j = v_j + \lambda = v_i + \lambda = w_i;\\
        u_q = w_q = v_q \text{ if } q\neq i, j.
    \end{cases}$ That is, $u=P_{(ij)}w$.
    
    Therefore, $Xu\sim Xw$. Moreover, $\begin{cases}
        Xu = Xv - \lambda(X^{(i)} - X^{(j)});\\
        Xw = Xv + \lambda(X^{(i)} - X^{(j)}).
    \end{cases}$

    Finally, for any $\lambda > 0$ we have $\alpha(e_4-e_5) - \lambda(X^{(i)} - X^{(j)}) \sim \alpha(e_4-e_5) + \lambda(X^{(i)} - X^{(j)})$. Then $(X^{(i)} - X^{(j)})_4 = 0$ by Lemma \ref{lem:0:alpha+-lambda_y}, a contradiction.
\end{proof}

The following simple observation allows us to refine the lemma above in Corollary \ref{cor:0:Xi-Xj_sim_alpha(e_1-e_2)}.

\begin{lemma}\label{lem:0:even_number}
    Let $v \in \RR^n$ and $v \sim -v$. Then the number of the nonzero coordinates of $v$ is even.
\end{lemma}
\begin{proof}
    As $v \sim -v$, the number of the positive coordinates of $v$ and $-v$ coincides. That is, the number of the positive coordinates of $v$ coincides with the number of the negative coordinates of $v$. It follows that the number of the nonzero coordinates of $v$ is even.
\end{proof}

\begin{corollary}\label{cor:0:Xi-Xj_sim_alpha(e_1-e_2)}
    Let $X\in M_n$, $n \geq 5$, satisfy $(0)$. 
    
    Then either $X=X^{(1)}e^t$, or there exists $\alpha > 0$ such that $X^{(i)} - X^{(j)} \sim \alpha(e_1 - e_2)$ for any $i, j \in \N$.
\end{corollary}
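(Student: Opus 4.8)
The plan is to analyze the single vector $v = X^{(i)} - X^{(j)}$ for a fixed pair of distinct indices $i, j \in \N$ and to pin down its structure using the parity and sparsity constraints already established for matrices satisfying $(0)$. First I would record that $v$ is a zero-sum vector: by Corollary \ref{cor:0:e^tXi=e^tXj} all columns of $X$ share the same sum, so $e^t v = e^t X^{(i)} - e^t X^{(j)} = 0$. This will let me convert a count of nonzero coordinates into exact knowledge of their values.

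Next I would combine two bounds on the support of $v$. On one hand, Corollary \ref{cor:0:Xi-Xj_sim_Xj-Xi} gives $v = X^{(i)} - X^{(j)} \sim X^{(j)} - X^{(i)} = -v$, so Lemma \ref{lem:0:even_number} forces the number of nonzero coordinates of $v$ to be even. On the other hand, since $n \geq 5$, Lemma \ref{lem:0:>=5} bounds that number by three. The only even integers in $\{0, 1, 2, 3\}$ are $0$ and $2$, so $v$ has either $0$ or exactly $2$ nonzero coordinates.

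Then I would split into the two alternatives of the statement. If $v = 0$ for some distinct pair, then $X^{(i)} = X^{(j)}$, and Corollary \ref{cor:0:Xi=Xj} immediately yields $X = X^{(1)}e^t$, the first alternative. Otherwise no two columns of $X$ coincide, so every difference $X^{(i)} - X^{(j)}$ with distinct $i, j$ has exactly two nonzero coordinates; being zero-sum, these two coordinates must equal $\beta$ and $-\beta$ for some $\beta \neq 0$, whence $v \sim \alpha(e_1 - e_2)$ with $\alpha = |\beta| > 0$. Finally I would invoke Lemma \ref{lem:0:Xi-Xj_sim_Xk-Xl} to conclude that all differences of distinct columns are mutually similar, so the single value $\alpha$ obtained above works uniformly for every pair, establishing the second alternative.

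I do not anticipate a genuine obstacle here, since the result is essentially an assembly of the preceding lemmas. The only point requiring a moment's care is the clean parity-versus-sparsity argument that rules out one or three nonzero coordinates, together with the observation that the dichotomy is forced to be uniform: once any pair of columns agrees we land in the rank-one case $X = X^{(1)}e^t$, and otherwise every pairwise difference simultaneously has the two-coordinate form.
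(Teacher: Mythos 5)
Your proposal is correct and follows essentially the same route as the paper: combine the sparsity bound of Lemma \ref{lem:0:>=5} with the parity constraint from Corollary \ref{cor:0:Xi-Xj_sim_Xj-Xi} and Lemma \ref{lem:0:even_number} to get $0$ or $2$ nonzero coordinates, use the equal column sums to identify the two-coordinate case with $\alpha(e_1-e_2)$, handle the zero case via Corollary \ref{cor:0:Xi=Xj}, and propagate uniformly with Lemma \ref{lem:0:Xi-Xj_sim_Xk-Xl}. The only cosmetic difference is that the paper fixes the pair $(1,2)$ and then transfers to general $(i,j)$, whereas you argue for an arbitrary pair directly; the content is identical.
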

\begin{proof}
    Consider the columns $X^{(1)}$ and $X^{(2)}$. According to Lemma \ref{lem:0:>=5}, the vector $X^{(1)}-X^{(2)}$ can have at most three nonzero coordinates. On the other hand, $X^{(1)}-X^{(2)}\sim X^{(2)}-X^{(1)}$ by Corollary \ref{cor:0:Xi-Xj_sim_Xj-Xi}. Then it follows from Lemma \ref{lem:0:even_number} that either $X^{(1)}=X^{(2)}$, or  the vector $X^{(1)}-X^{(2)}$ has exactly two nonzero coordinates.

    If $X^{(1)}=X^{(2)}$, then $X = X^{(1)}e^t$ by Corollary \ref{cor:0:Xi=Xj}. 
    
    Assume that $X^{(1)}-X^{(2)}$ has exactly two nonzero coordinates. By Corollary \ref{cor:0:e^tXi=e^tXj} $e^t(X^{(1)}-X^{(2)}) = 0$ and thus $X^{(1)}-X^{(2)} \sim \alpha(e_1-e_2)$ for some $\alpha > 0$.

    Finally, by Lemma \ref{lem:0:Xi-Xj_sim_Xk-Xl}, $\alpha(e_1-e_2)\sim X^{(1)}-X^{(2)} \sim X^{(i)}-X^{(j)}$.
\end{proof}

We can generalize the result above to smaller matrices.

\begin{lemma}\label{lem:0:>=3}
Let $X\in M_n$, $n \geq 3$, satisfy $(0)$. 

Then either $X = X^{(1)}e^t$ or there exists $\alpha > 0$ such that $X^{(i)} - X^{(j)} \sim \alpha(e_1 - e_2)$ for any $i, j \in \N$. In particular, in the latter case any two different columns $X^{(i)}, X^{(j)}$ have distinct values in exactly two rows.
\end{lemma}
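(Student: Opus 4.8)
The plan is to split the argument into three regimes according to $n$, since the case $n \geq 5$ is already done. For $n \geq 5$ I would simply invoke Corollary \ref{cor:0:Xi-Xj_sim_alpha(e_1-e_2)}, which gives exactly the desired dichotomy. In all cases the common skeleton is the same: if two columns of $X$ coincide then $X = X^{(1)}e^t$ by Corollary \ref{cor:0:Xi=Xj} and we are in the first alternative, so I may assume no two columns are equal. By Lemma \ref{lem:0:Xi-Xj_sim_Xk-Xl} every difference $X^{(i)}-X^{(j)}$ is $\sim$ to the fixed vector $d := X^{(1)}-X^{(2)}$, so all differences share the same multiset of entries and hence the same number of nonzero coordinates; moreover $e^t d = 0$ by Corollary \ref{cor:0:e^tXi=e^tXj} and $d \sim -d$ by Corollary \ref{cor:0:Xi-Xj_sim_Xj-Xi}, so by Lemma \ref{lem:0:even_number} this common number of nonzero coordinates is \emph{even}. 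The whole problem thus reduces to showing that this even number equals $2$ (it cannot be $0$ under our assumption), for then $d \sim \alpha(e_1-e_2)$ with $\alpha > 0$, and the ``in particular'' clause about two rows is just a restatement.

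For $n = 3$ this is immediate: an even, nonzero count of nonzero coordinates of a vector in $\RR^3$ can only be $2$, so no further work is needed beyond the parity observation above.

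The genuine obstacle is $n = 4$, where parity allows the count to be $2$ or $4$, and I must rule out $4$. The reason the $n \geq 5$ argument of Lemma \ref{lem:0:>=5} fails here is that there I placed $Xv$ on two rows \emph{disjoint} from at least three rows in which the columns differ, forcing at least three nonzero entries; with $n=4$ the support of $Xv$ leaves only two rows outside it, which is not immediately enough. The plan for $n=4$ is to assume, for contradiction, that all column differences are nonzero in \emph{every} row, and then use Lemma \ref{lem:0:alphae1-alphae2} to pick a nonzero $v \in \0^4$ with $Xv = \alpha(e_3 - e_4)$, where $\alpha > 0$ by Corollary \ref{cor:0:alpha>0}. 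I would then branch on the entries of $v$. If two entries agree, say $v_i = v_j$, I run Case 2 of Lemma \ref{lem:0:>=5} verbatim: with $u = v - \lambda(e_i-e_j)$ and $w = v + \lambda(e_i-e_j)$ one has $u = P_{(ij)}w$, hence $\alpha(e_3-e_4) - \lambda(X^{(i)}-X^{(j)}) \sim \alpha(e_3-e_4) + \lambda(X^{(i)}-X^{(j)})$ for all $\lambda > 0$, and Lemma \ref{lem:0:alpha+-lambda_y} (with coordinates $3,4$ in place of $1,2$) forces $(X^{(i)}-X^{(j)})_3 = (X^{(i)}-X^{(j)})_4 = 0$, contradicting full support.

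It remains to treat the case where all four entries of $v$ are distinct, which I expect to be the delicate point. Here every transposition $P_{(ij)}$ moves $v$, so for each pair $i \neq j$ the relation $Xv \sim XP_{(ij)}v$ reads $\alpha(e_3-e_4) - (v_i-v_j)(X^{(i)}-X^{(j)}) \sim \alpha(e_3-e_4)$. The right-hand side has exactly two nonzero entries, $\pm\alpha$; since $X^{(i)}-X^{(j)}$ is nonzero in rows $1,2$ (outside the support of $Xv$) and $v_i \neq v_j$, the left-hand side is already nonzero in rows $1,2$, which must therefore be its only nonzero rows and must equal $\pm\alpha$, while rows $3,4$ must vanish. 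Reading off all four coordinates shows every entry of $X^{(i)}-X^{(j)}$ has the single magnitude $\alpha / |v_i - v_j|$; since this difference has the fixed multiset of $d$, this forces $|v_i - v_j| = \alpha/p$ for one fixed value $p>0$, for \emph{every} pair. Four distinct reals cannot have all pairwise absolute differences equal (ordering them, the extreme gap strictly exceeds an adjacent one), giving the contradiction. Thus the count is $2$ and the conclusion follows.
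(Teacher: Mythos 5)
Your proof is correct, and its skeleton coincides with the paper's: invoke Corollary \ref{cor:0:Xi-Xj_sim_alpha(e_1-e_2)} for $n\geq 5$; use Corollaries \ref{cor:0:Xi=Xj}, \ref{cor:0:e^tXi=e^tXj}, \ref{cor:0:Xi-Xj_sim_Xj-Xi} and Lemmas \ref{lem:0:Xi-Xj_sim_Xk-Xl}, \ref{lem:0:even_number} to reduce everything to showing that the common, even number of nonzero coordinates of $X^{(1)}-X^{(2)}$ equals $2$; observe this is automatic for $n=3$. Where you genuinely diverge is the one nontrivial step, ruling out four nonzero coordinates when $n=4$. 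The paper's argument is purely combinatorial: full support of all differences forces $x_{11},x_{12},x_{13},x_{14}$ to be pairwise distinct, and then the six distinct numbers $x_{1i}-x_{1j}$ ($i\neq j$), each being the first coordinate of some difference $\sim X^{(1)}-X^{(2)}$, would all have to occur among the four entries of $X^{(1)}-X^{(2)}$ --- impossible. You instead recycle the machinery of Lemma \ref{lem:0:>=5}: take $v\in\0^4$ with $Xv=\alpha(e_3-e_4)$, dispose of the case $v_i=v_j$ exactly as in Case 2 there via Lemma \ref{lem:0:alpha+-lambda_y}, and in the all-distinct case use $Xv\sim XP_{(ij)}v$ together with the two-element support of $\alpha(e_3-e_4)$ to force every entry of $X^{(i)}-X^{(j)}$ to have the single magnitude $\alpha/|v_i-v_j|$; comparison with the fixed multiset of $X^{(1)}-X^{(2)}$ then makes the four distinct $v_i$ pairwise equidistant, which is impossible. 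Both routes are sound. The paper's is shorter and needs no auxiliary vector; yours buys a unification of the $n=4$ case with the $n\geq 5$ proof at the cost of the extra branch on whether $v$ has a repeated entry.
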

\begin{proof}
    Due to Corollary \ref{cor:0:Xi-Xj_sim_alpha(e_1-e_2)} we only need to consider $n = 3$ and $n = 4$.

    Observe that $X^{(1)} -X^{(2)}\sim X^{(2)}-X^{(1)}$ by Corollary \ref{cor:0:Xi-Xj_sim_Xj-Xi}. Therefore Lemma \ref{lem:0:even_number} states that the number of nonzero coordinates in $X^{(1)}-X^{(2)}$ is even. Note also that $e^tX^{(1)}=e^tX^{(2)}$ by Corollary \ref{cor:0:e^tXi=e^tXj}.
    
    If $X^{(1)}=X^{(2)}$, then $X = X^{(1)}e^t$ by Corollary \ref{cor:0:Xi=Xj}. If $X^{(1)}-X^{(2)}$ has exactly two nonzero coordinates, then $X^{(1)}-X^{(2)}\sim \alpha(e_1 - e_2)$ for some $\alpha > 0$. Moreover, by Lemma \ref{lem:0:Xi-Xj_sim_Xk-Xl}, we obtain that $X^{(i)} - X^{(j)}\sim X^{(1)}-X^{(2)}\sim \alpha(e_1-e_2)$.

    Assume that $X^{(1)}-X^{(2)}$ has four nonzero coordinates. Naturally, this can only happen if $n = 4$. Therefore $X^{(1)}-X^{(2)}$ has no zero coordinates. Then by Lemma \ref{lem:0:Xi-Xj_sim_Xk-Xl} there are no zero coordinates in $X^{(i)}-X^{(j)}$ for any distinct $i, j \in \N$. Therefore the entries $x_{11}, x_{12}, x_{13}$ and $x_{14}$ are distinct. Without loss of generality, assume that $x_{11}<x_{12}<x_{13}<x_{14}$.

    As $X^{(i)}-X^{(j)}\sim X^{(k)}-X^{(l)}$ for any distinct $i, j \in \N$ and any distinct $k, l \in \N$, the distinct values $x_{11} - x_{14} < x_{11}-x_{13} < x_{11}-x_{12} < x_{12} - x_{11} < x_{13} - x_{11} < x_{14}-x_{11}$ must be the entries of $X^{(1)}-X^{(2)}$. But this contradicts $n = 4$.
\end{proof}

\subsection{Matrices satisfying $(\alpha)$}

Consider the following property of a matrix $X \in M_n$.
\begin{definition}\label{def:alpha}\rm 
    A matrix $X \in M_n$ {\em satisfies condition $(\alpha)$} if there exists $\alpha > 0$ such that $X^{(i)}-X^{(j)}\sim \alpha(e_1 - e_2)$ for any distinct $i, j \in \N$.
\end{definition}

Lemma \ref{lem:0:>=3} states that if $X \in M_n$, $n \geq 3$, satisfies $(0)$ and $X \neq X^{(1)}e^t$, then $X$ satisfies $(\alpha)$. In this section, we are going to characterize all square matrices satisfying $(\alpha)$. As we shall see in Theorems \ref{thm:alpha} and \ref{thm:0}, this property is very similar to preserving majorization for the zero-sum vectors.

\begin{lemma}\label{lem:alpha:e^tXi=e^tXj}
    Let $X \in M_n$ satisfy $(\alpha)$. Then $e^t X^{(i)}=e^tX^{(j)}$ for any $i, j \in \N$.
\end{lemma}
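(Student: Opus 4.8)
The plan is to reduce everything to the single fact that the relation $\sim$ is nothing but equality up to a permutation of coordinates, which is exactly Lemma \ref{lem:vector:symmetry}. A permutation of entries does not change their sum, so $a \sim b$ automatically forces $e^t a = e^t b$. The only content of condition $(\alpha)$ that is needed is that the ``target'' vector $\alpha(e_1 - e_2)$ has coordinate sum $\alpha(1-1) = 0$.

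Concretely, I would fix distinct $i, j \in \N$ (the case $i = j$ being vacuous) and invoke condition $(\alpha)$ to obtain $\alpha > 0$ with $X^{(i)} - X^{(j)} \sim \alpha(e_1 - e_2)$. By Lemma \ref{lem:vector:symmetry} there is $P \in P(n)$ with $X^{(i)} - X^{(j)} = P\,\alpha(e_1 - e_2)$, so $e^t(X^{(i)} - X^{(j)}) = e^t P\,\alpha(e_1 - e_2) = \alpha\, e^t(e_1 - e_2) = 0$, using $e^t P = e^t$. Rearranging gives $e^t X^{(i)} = e^t X^{(j)}$, which is the claim.

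There is essentially no obstacle here: the statement is a one-line consequence of the definition of $(\alpha)$ together with Lemma \ref{lem:vector:symmetry}. It is the exact analogue of Corollary \ref{cor:0:e^tXi=e^tXj} for matrices satisfying $(0)$, but in fact even more immediate, since for condition $(\alpha)$ the vector to which $X^{(i)} - X^{(j)}$ is similar is explicitly $\alpha(e_1 - e_2)$, whose zero coordinate sum can be read off directly, rather than having to be deduced from the self-similarity $X^{(i)} - X^{(j)} \sim X^{(j)} - X^{(i)}$.
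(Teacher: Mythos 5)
Your proof is correct and is essentially the paper's argument: the paper's one-line proof computes $e^t(X^{(i)}-X^{(j)})=e^t(\alpha(e_1-e_2))=0$, implicitly using that $\sim$ preserves coordinate sums, which you simply make explicit via Lemma \ref{lem:vector:symmetry}. No substantive difference.
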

\begin{proof}
    Observe that $e^t(X^{(i)}- X^{(j)})=e^t(\alpha(e_1 - e_2)) = 0$.
\end{proof}

Next we show that in every row of $X$ there are at most two distinct values.

\begin{lemma}\label{lem:alpha:2distinct_in_a_row}
    Let $X \in M_n$ satisfy $(\alpha)$. Then for any $i \in \N$ there are at most two distinct entries in $X_{(i)}$.
\end{lemma}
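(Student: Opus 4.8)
The plan is to reinterpret condition $(\alpha)$ at the level of individual matrix entries. By Lemma~\ref{lem:vector:symmetry}, the relation $X^{(j)}-X^{(k)}\sim\alpha(e_1-e_2)$ means that $X^{(j)}-X^{(k)}$ is literally a permutation of the vector $\alpha(e_1-e_2)$, whose coordinates consist of one $\alpha$, one $-\alpha$, and $n-2$ zeros. The first step I would record is the immediate entrywise consequence: for every pair of distinct columns $j,k$ and every row index $i$, the $i$-th coordinate of $X^{(j)}-X^{(k)}$, namely $x_{ij}-x_{ik}$, must belong to the three-element set $\{-\alpha,0,\alpha\}$. This is the crucial translation, and it holds with a single common $\alpha$ for all choices of $i,j,k$.

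Next I would fix a row index $i$ and argue by contradiction. Suppose the row $X_{(i)}$ contained three distinct values; then one could select columns $p,q,r$ with $x_{ip}<x_{iq}<x_{ir}$. The differences $x_{iq}-x_{ip}$ and $x_{ir}-x_{iq}$ are both strictly positive and, by the entrywise statement above, both lie in $\{-\alpha,0,\alpha\}$, which forces each to equal $\alpha$. Consequently $x_{ir}-x_{ip}=2\alpha$. But applying the entrywise statement to the columns $p$ and $r$ requires $x_{ir}-x_{ip}\in\{-\alpha,0,\alpha\}$, and since $\alpha>0$ the value $2\alpha$ is not among these, a contradiction. Hence $X_{(i)}$ admits at most two distinct entries, and since $i$ was arbitrary the claim follows.

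Since the argument is entirely elementary, there is no genuine obstacle here; the only point demanding care is the very first step, namely using the correct interpretation of $\sim$ as equality up to a permutation (Lemma~\ref{lem:vector:symmetry}) rather than as a weaker majorization relation. This interpretation is exactly what guarantees that each coordinate of every column difference takes a value in the finite set $\{-\alpha,0,\alpha\}$, with the \emph{same} $\alpha$ shared across all column pairs, which is what makes the $2\alpha$ contradiction rigorous.
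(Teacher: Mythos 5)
Your proof is correct and takes essentially the same approach as the paper: both arguments reduce condition $(\alpha)$ to the entrywise observation that each coordinate of a column difference lies in $\{-\alpha,0,\alpha\}$ (equivalently, that the unique positive entry of such a difference is $\alpha$), and then derive a contradiction from three distinct values in a row. The paper contradicts via the two distinct positive differences $x_{il}-x_{ij}$ and $x_{ik}-x_{ij}$ both having to equal $\alpha$, while you contradict via $x_{ir}-x_{ip}=2\alpha$; this is only a cosmetic difference.
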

\begin{proof}
    Assume that for some $i, j, k, l \in \N$ we have $x_{ij} < x_{ik} < x_{il}$. In this case, $x_{il} - x_{ij}$ and $x_{ik} - x_{ij}$ are distinct positive numbers. But this contradicts $X^{(l)}-X^{(j)}\sim X^{(k)}-X^{(j)}\sim \alpha(e_1 - e_2)$.
\end{proof}

The following technical lemma shows that for some fixed columns $k, l, m$ the situation $x_{ik} \neq x_{il}=x_{im}$ can only happen in one row of $X$.

\begin{lemma}\label{lem:alpha:technical}
    Let $X \in M_n$ satisfy $(\alpha)$. Assume that there are $i \in \N$ and distinct $k, l, m \in \N$ such that $x_{ik} \neq x_{il}=x_{im}$. Then $x_{jk} \in \{x_{jl}, x_{jm}\}$ for any $j \in \N$, $j \neq i$.
\end{lemma}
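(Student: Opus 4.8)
The plan is to work with the three pairwise column differences $X^{(k)}-X^{(l)}$, $X^{(k)}-X^{(m)}$ and $X^{(l)}-X^{(m)}$, each of which is $\sim \alpha(e_1-e_2)$ by condition $(\alpha)$ and hence has \emph{exactly} two nonzero coordinates, since a vector similar to $\alpha(e_1-e_2)$ is a permutation of it by Lemma \ref{lem:vector:symmetry} (and $\alpha>0$ so it is genuinely nonzero). First I would record how row $i$ interacts with these differences: by hypothesis $(X^{(k)}-X^{(l)})_i = x_{ik}-x_{il}\neq 0$, and since $x_{il}=x_{im}$ we also get $(X^{(k)}-X^{(m)})_i = x_{ik}-x_{im}\neq 0$, whereas $(X^{(l)}-X^{(m)})_i = x_{il}-x_{im}=0$. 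Thus row $i$ is one of the two nonzero rows of the first two differences but is a zero row of the third.

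Next I would translate the desired conclusion into a statement about supports. The claim $x_{jk}\in\{x_{jl},x_{jm}\}$ for $j\neq i$ is equivalent to $(X^{(k)}-X^{(l)})_j = 0$ or $(X^{(k)}-X^{(m)})_j=0$. Since $X^{(k)}-X^{(l)}$ has exactly two nonzero coordinates, one of which sits in row $i$, there is a unique row $p\neq i$ carrying its other nonzero entry, and it vanishes in all remaining rows; likewise there is a unique row $q\neq i$ for $X^{(k)}-X^{(m)}$. For $j\neq i$ with $j\neq p$ the first difference vanishes, and for $j\neq q$ the second vanishes, so the conclusion holds automatically unless $j=p=q$.

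It therefore remains to rule out $p=q$, which I expect to be the only real point of the argument. Assuming $p=q$, both $X^{(k)}-X^{(l)}$ and $X^{(k)}-X^{(m)}$ are supported on the two rows $\{i,p\}$, so their difference $X^{(l)}-X^{(m)} = (X^{(k)}-X^{(m)})-(X^{(k)}-X^{(l)})$ is supported on $\{i,p\}$ as well; but its $i$-th entry is $x_{il}-x_{im}=0$, leaving at most one nonzero coordinate. This contradicts $X^{(l)}-X^{(m)}\sim\alpha(e_1-e_2)$, which forces exactly two nonzero coordinates (here I use $l\neq m$, valid since $k,l,m$ are distinct). Hence $p\neq q$, the case $j=p=q$ never occurs, and the lemma follows.
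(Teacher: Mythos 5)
Your proof is correct. It rests on the same core observation as the paper's argument: by Lemma \ref{lem:vector:symmetry} each pairwise column difference is a permutation of $\alpha(e_1-e_2)$ and hence has exactly two nonzero coordinates, and the contradiction is extracted from the third difference $X^{(l)}-X^{(m)}$. The only real divergence is that the paper first invokes Lemma \ref{lem:alpha:2distinct_in_a_row} to force $x_{jl}=x_{jm}$ and then concludes $X^{(l)}=X^{(m)}$ outright, whereas you bypass that lemma entirely by tracking supports: if the two differences $X^{(k)}-X^{(l)}$ and $X^{(k)}-X^{(m)}$ shared the same second support row, their difference would have at most one nonzero coordinate, which already contradicts $(\alpha)$. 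Your version is marginally more self-contained (it does not need the at-most-two-values-per-row lemma) and makes the "unique second row $p$, $q$" structure explicit, but the underlying counting argument is the same.
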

\begin{proof}
    Assume that $x_{jk}\neq x_{jl}, x_{jm}$. Then, by Lemma \ref{lem:alpha:2distinct_in_a_row}, $x_{jl}=x_{jm}$. As $x_{ik}\neq x_{il}$, $x_{jk}\neq x_{jl}$ and $X$ satisfies $(\alpha)$, we obtain that $x_{qk}=x_{ql}$ for any $q \in \N$, $q \neq i, j$. Similarly, $x_{qk}=x_{qm}$ for any $q \in \N$, $q \neq i, j$. It follows that $X^{(l)} = X^{(m)}$ and $X^{(l)}-X^{(m)}=0$, a contradiction.
\end{proof}

The lemma below provides the key feature of matrices satisfying $(\alpha)$.

 \begin{lemma}\label{lem:alpha:n-1/n}
     Let $X \in M_n$ satisfy $(\alpha)$. Then in every row $X_{(i)}$ of $X$ at least $n-1$ of the $n$ entries are equal.
 \end{lemma}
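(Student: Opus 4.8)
The plan is to combine the two structural facts already established for matrices satisfying $(\alpha)$: each row has at most two distinct values (Lemma \ref{lem:alpha:2distinct_in_a_row}), and the ``singleton'' pattern from Lemma \ref{lem:alpha:technical}. The goal is to rule out any row in which a value is repeated exactly $k$ times for some $2 \le k \le n-2$, leaving only the possibilities that all $n$ entries agree or that exactly one entry differs from the other $n-1$. So suppose toward a contradiction that some row $X_{(i)}$ has exactly two distinct values, say value $a$ occurring on a column set $\mathcal{I}$ and value $b$ on its complement, with $2 \le |\mathcal{I}| \le n-2$; thus both blocks contain at least two columns.

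First I would extract a clean local configuration. Pick distinct columns $k, l \in \mathcal{I}$ (same value $a$ in row $i$) and distinct columns $m, p$ in the complement (same value $b \ne a$ in row $i$). The idea is to find another row where these columns separate the ``wrong'' way, producing a difference vector $X^{(\cdot)} - X^{(\cdot)}$ that has more than two nonzero coordinates or otherwise violates $X^{(i)}-X^{(j)} \sim \alpha(e_1-e_2)$. Concretely, consider the column $X^{(k)} - X^{(m)}$: since $X$ satisfies $(\alpha)$ this difference has exactly two nonzero entries, and one of them sits in row $i$ (where $a \ne b$). Hence $X^{(k)}$ and $X^{(m)}$ agree in every row except row $i$ and one other row. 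Running the same argument over all four cross-pairs $\{k,l\}\times\{m,p\}$ forces a web of agreements among the columns $k,l,m,p$ in all rows $j \ne i$, which I expect to collapse to $X^{(k)}=X^{(l)}$ or $X^{(m)}=X^{(p)}$ outside row $i$ --- but these columns already agree in row $i$, so two columns coincide entirely, and then Corollary \ref{cor:0:Xi=Xj} (applicable since $(\alpha)$ gives the $(0)$-type similarity on differences) would force $X = X^{(1)}e^t$, contradicting $\alpha > 0$.

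The main obstacle will be organizing the bookkeeping of the ``second nonzero coordinate'' of each difference vector. For each cross-pair the two columns differ in row $i$ plus exactly one further row, but a priori that second row could depend on the pair; the crux is to show these second rows cannot all be distinct without creating a column difference with three nonzero entries. I would handle this by invoking Lemma \ref{lem:alpha:technical} directly: within block $\mathcal{I}$ we have $x_{il}=x_{ik}$ while $x_{im}\ne x_{ik}$, so the hypothesis $x_{ik}\neq x_{im}=x_{ip}$ (reading $l\leftrightarrow$ the repeated value) is exactly the configuration it controls, yielding $x_{jk}\in\{x_{jm},x_{jp}\}$ for every $j\ne i$. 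Applying this symmetrically across the blocks pins down every entry of the four columns in all rows $j \ne i$ and produces the coincidence of two full columns. The remaining cases $|\mathcal{I}|=1$ and $|\mathcal{I}|=n-1$ are precisely the allowed conclusion (one entry differs), and $|\mathcal{I}|=0$ or $n$ means the row is constant, so every row indeed has at least $n-1$ equal entries.
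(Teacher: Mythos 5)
Your setup is sound and matches the paper's: reduce to a row $i$ split as $x_{ik}=x_{il}\neq x_{im}=x_{ip}$ with both blocks of size at least two, and attack the four columns $k,l,m,p$ using Lemma \ref{lem:alpha:technical} together with the fact that under $(\alpha)$ every column difference has exactly two nonzero entries. The gap is in the endgame: the ``web of agreements'' does \emph{not} collapse to $X^{(k)}=X^{(l)}$ or $X^{(m)}=X^{(p)}$ --- indeed that can never happen, since $(\alpha)$ with $\alpha>0$ forces any two columns to differ in exactly two rows. Working the support constraints out (e.g.\ from $X^{(k)}-X^{(l)}=(X^{(k)}-X^{(m)})-(X^{(l)}-X^{(m)})$, whose row-$i$ entries cancel), one finds two rows $r_1\neq r_2$, both distinct from $i$, such that $X^{(k)}-X^{(m)}$ and $X^{(l)}-X^{(p)}$ are supported on $\{i,r_1\}$ while $X^{(k)}-X^{(p)}$ and $X^{(l)}-X^{(m)}$ are supported on $\{i,r_2\}$; in all other rows the four columns agree, and in rows $r_1,r_2$ they pair up as $\{k,p\},\{l,m\}$ and $\{k,m\},\{l,p\}$ respectively. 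This configuration satisfies every agreement constraint you list, no two full columns coincide, and no difference acquires a third nonzero entry --- so your argument terminates without a contradiction. (The appeal to Corollary \ref{cor:0:Xi=Xj} is also misplaced: it is proved under condition $(0)$, not $(\alpha)$, and is unnecessary anyway, since $X^{(k)}=X^{(l)}$ would contradict $(\alpha)$ outright.)

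The missing ingredient is a comparison of \emph{signed entries}, not just supports. In the configuration above, both $X^{(k)}-X^{(l)}$ and $X^{(m)}-X^{(p)}$ are supported on $\{r_1,r_2\}$, and the identities $x_{r_1k}=x_{r_1p}$, $x_{r_1l}=x_{r_1m}$, $x_{r_2k}=x_{r_2m}$, $x_{r_2l}=x_{r_2p}$ show that these two difference vectors have equal entries in row $r_2$ and opposite entries in row $r_1$. Since each must be a permutation of $(\alpha,-\alpha,0,\dots,0)$, the two nonzero entries of each must be negatives of one another, and combining this with the previous sentence forces those entries to vanish --- the actual contradiction. This is precisely how the paper closes the argument (normalizing to $i=1$, columns $1,2,3,4$, $r_1=2$, $r_2=3$, and comparing $X^{(1)}-X^{(2)}$ with $X^{(3)}-X^{(4)}$). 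Replace the ``two columns coincide'' step with this sign comparison and the rest of your outline goes through.
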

 \begin{proof}
     By Lemma \ref{lem:alpha:2distinct_in_a_row} in every row of $X$ there are at most two distinct entries. For $n \leq 3$ this immediately proves the required result.

    Let $n \geq 4$. Assume the contrary, that, without loss of generality, $x_{11}=x_{12}\neq x_{13}=x_{14}$. The vector $X^{(1)}-X^{(2)}$ has exactly two nonzero coordinates. Without loss of generality, we may assume that $x_{21}\neq x_{22}$ and $x_{31}\neq x_{32}$. Then $X = \begin{pmatrix}
        x_{11} & x_{11} & x_{13} & x_{13} & \cdots\\
        x_{21} & x_{22} & \cdots\\
        x_{31} & x_{32} & \cdots\\
        \vdots
    \end{pmatrix}$.

    Applying Lemma \ref{lem:alpha:technical} to rows $1, 2$ and columns $1, 3, 4$ we obtain that either $x_{21} = x_{23}$ or $x_{21} = x_{24}$. Similarly, applying that lemma to the same rows and to columns $2, 3, 4$ we obtain that either $x_{22} = x_{23}$ or $x_{22} = x_{24}$.

    Permuting, if necessary, columns $X^{(3)}$ and $X^{(4)}$ we obtain $X = \begin{pmatrix}
        x_{11} & x_{11} & x_{13} & x_{13} & \cdots\\
        x_{21} & x_{22} & x_{21} & x_{22} & \cdots\\
        x_{31} & x_{32} & \cdots\\
        \vdots
    \end{pmatrix}$. Observe that $X^{(1)}$ and $X^{(4)}$ have distinct values in rows $1, 2$. Therefore $x_{31}=x_{34}$. Similarly, $X^{(2)}$ and $X^{(3)}$ have distinct values in rows $1, 2$. Therefore $x_{32}=x_{33}$.

    It follows that $X = \begin{pmatrix}
        x_{11} & x_{11} & x_{13} & x_{13} & \cdots\\
        x_{21} & x_{22} & x_{21} & x_{22} & \cdots\\
        x_{31} & x_{32} & x_{32} & x_{31} & \cdots\\
        \vdots
    \end{pmatrix}$ and $$X^{(1)}-X^{(2)}=(x_{21}-x_{22})e_2 + (x_{31}-x_{32})e_3\sim X^{(3)}-X^{(4)}=(x_{21}-x_{22})e_2 + (x_{32}-x_{31})e_3.$$  But this is impossible since $x_{32}-x_{31}\neq 0$.
 \end{proof}
 
The following theorem characterizes square matrices satisfying $(\alpha)$. In the next section we will show that this essentially characterizes linear operators preserving majorization for the zero-sum vectors.
 
 \begin{theorem}\label{thm:alpha}
     Let $X \in M_n$. Then $X$ satisfies $(\alpha)$ if and only if $X = ve^t + \lambda P$ for some $\lambda \neq 0$, $v \in \RR^n$ and $P\in P(n)$.
 \end{theorem}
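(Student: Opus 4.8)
The plan is to prove both directions, with the forward (``if'') direction a direct computation and the converse resting on the structural lemmas already established.

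For the ``if'' direction, suppose $X = ve^t + \lambda P$ with $\lambda \neq 0$ and $P \in P(n)$. Writing $P^{(i)} = e_{\rho(i)}$ for the bijection $\rho$ recording the position of the $1$ in each column, I get $X^{(i)} = v + \lambda e_{\rho(i)}$, hence $X^{(i)} - X^{(j)} = \lambda(e_{\rho(i)} - e_{\rho(j)})$ for distinct $i, j$. Since $\rho$ is a bijection, this vector has exactly one entry equal to $\lambda$ and one equal to $-\lambda$, so $X^{(i)} - X^{(j)} \sim |\lambda|(e_1 - e_2)$; taking $\alpha = |\lambda| > 0$ shows $X$ satisfies $(\alpha)$.

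For the converse, assume $X$ satisfies $(\alpha)$ and first dispose of the small cases by hand. For $n = 1$ the condition is vacuous and any scalar decomposes trivially; for $n = 2$ the single relation $X^{(1)} - X^{(2)} \sim \alpha(e_1 - e_2)$ forces $X = ve^t \pm \alpha I$ directly. For $n \geq 3$, Lemma \ref{lem:alpha:n-1/n} shows that in each row $X_{(i)}$ at least $n - 1$ entries coincide; since $n - 1 \geq 2$, this majority value $v_i$ is unique. Setting $v = (v_1, \ldots, v_n)^t$ and $Y = X - ve^t$, every row of $Y$ consists of zeros except for at most one nonzero entry, while $Y$ still satisfies $(\alpha)$ because $Y^{(i)} - Y^{(j)} = X^{(i)} - X^{(j)}$.

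The core of the argument is then a counting step. For each column $j$ let $n_j$ be the number of rows whose unique nonzero entry lies in column $j$; a row $k$ whose nonzero entry sits in column $c_k$ contributes a nonzero coordinate to $Y^{(i)} - Y^{(j)}$ exactly when $c_k \in \{i, j\}$, so the number of nonzero coordinates of $Y^{(i)} - Y^{(j)}$ equals $n_i + n_j$. Condition $(\alpha)$ forces this to be $2$ for every pair of distinct columns, and solving $n_i + n_j = 2$ over all pairs (using $n \geq 3$) gives $n_j = 1$ for every $j$. Consequently every row of $Y$ has exactly one nonzero entry and each column receives exactly one, so the nonzero positions trace out a permutation $\pi$ with $Y_{i, \pi(i)} = \mu_i \neq 0$. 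Finally, comparing the two surviving entries of $Y^{(i)} - Y^{(j)}$, which must be $\alpha$ and $-\alpha$, forces $\mu_{\pi^{-1}(i)} = \mu_{\pi^{-1}(j)}$ for all distinct $i, j$; hence all the $\mu_i$ equal a common value $\lambda = \pm\alpha \neq 0$, giving $Y = \lambda P$ and $X = ve^t + \lambda P$.

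The main obstacle is really discharged by the earlier lemmas: once Lemma \ref{lem:alpha:n-1/n} supplies the ``at most one exceptional entry per row'' structure, the remaining work is the bookkeeping above. The points requiring care are the well-definedness of the majority value $v_i$ (which is why $n \geq 3$ is needed and the cases $n \leq 2$ are separated) and the verification that the two surviving entries of a column difference carry opposite signs yet equal magnitude, which is precisely what pins all the $\mu_i$ to a single $\lambda$.
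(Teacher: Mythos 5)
Your proof is correct and follows essentially the same route as the paper: both hinge on Lemma \ref{lem:alpha:n-1/n} to extract the row-majority vector $v$ and then show that the exceptional entries of $X - ve^t$ occupy a permutation pattern with a common nonzero value. The only differences are in the bookkeeping: where the paper invokes Lemma \ref{lem:alpha:technical} together with the column-sum identity of Lemma \ref{lem:alpha:e^tXi=e^tXj} to locate the unique entries and equalize them, you use the counting identity $n_i + n_j = 2$ over all pairs of columns and a direct comparison of the two surviving entries of $Y^{(i)} - Y^{(j)}$, which is a clean, self-contained alternative.
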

 \begin{proof}

    Assume that $X$ satisfies $(\alpha)$.
    
     First of all, note that due to Lemma \ref{lem:alpha:n-1/n} in every row of $X$ at least $n-1$ of the $n$ entries coincide. Let us call the other entries {\em `unique'}. In other words, an entry $x_{ij}$ of $X_{(i)}$ is unique if $x_{ij}\neq x_{i1} = \ldots = x_{i,j-1}=x_{i,j+1} = \ldots = x_{in}$. There are at most $n$ unique entries in all of $X$. Note that, in general, the unique entries of different rows do not have to be distinct.

     By Lemma \ref{lem:alpha:technical}, no column of $X$ can have more than one unique entry. On the other hand, if there are two columns $X^{(j_1)}$, $X^{(j_2)}$ without unique entries, then $X^{(j_1)}-X^{(j_2)}=0$, which contradicts Condition $(\alpha)$.

     If there is only one column $X^{(j_1)}$ without unique entries, then any other column $X^{(j_2)}$ must have exactly one unique entry. But in this case the vector $X^{(j_1)}-X^{(j_2)}$ has exactly one nonzero entry, which contradicts Condition $(\alpha)$.

     Finally, we conclude that there is exactly one unique entry in every column of $X$. Combining this with the fact that in every row there is at most one unique entry, we obtain that there exists $P \in P(n)$ such that $x_{ij}$ is unique if and only if $p_{ij}=1$.

     Let $v \in \RR^n$ be such that $v_i$ is the non-unique entry value of $X_{(i)}$, $i \in \N$. That is, $x_{ij} = v_i$ for any $i, j \in \N$ with $p_{ij}=0$.

     Consider $D = X - v e^t$. Recall that by Lemma \ref{lem:alpha:e^tXi=e^tXj} $e^tX = (e^t X^{(1)})e^t$. Then $e^t D = e^t X - e^t v e^t = e^t (X^{(1)} - v)e^t = \lambda e^t$, where $\lambda = e^t(X^{(1)} - v)$. 
     
     By the definitions of $v$ and $P$ we obtain that $d_{ij} \neq 0$ if and only if $p_{ij}=1$. Then for any $i, j \in \N$ with $p_{ij}=1$ we obtain $d_{ij} = (e^t D)_j = \lambda$. Therefore $D = \lambda P$.

     Finally, $X = ve^t + \lambda P$. In addition, $\lambda \neq 0$, because otherwise $X^{(1)}=X^{(2)}$, which contradicts $(\alpha)$.

    Now to prove the converse, assume that $X = ve^t + \lambda P$ for some $\lambda \neq 0$, $v \in \RR^n$ and $P\in P(n)$. Then $X^{(i)} - X^{(j)} = \lambda (P^{(i)} - P^{(j)}) \sim \lambda(e_1 - e_2)$.
 \end{proof}

\subsection{Characterization of linear operators preserving majorization for the zero-sum vectors}\label{subsec:0^n:preservers}

In this section, we characterize linear operators preserving majorization on $\0^n$. We start by proving the following sufficient condition.

\begin{lemma}\label{lem:0:sufficient}
    Let $\phi$ be a linear operator on $\RR^n$ given by $[\phi] = ve^t + \lambda P$ for some $v \in \RR^n$, $\lambda \in \RR$ and $P \in P(n)$.

    Then $\phi$ preserves majorization on $\0^n$.
\end{lemma}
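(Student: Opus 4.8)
The plan is to show directly that for any $a, b \in \0^n$ with $a \preceq b$, we have $\phi(a) \preceq \phi(b)$, by exploiting the additive structure $[\phi] = ve^t + \lambda P$ together with the characterization of majorization via doubly stochastic matrices (Theorem \ref{HLP}). The key observation is that the term $ve^t$ annihilates zero-sum vectors: for any $x \in \0^n$ we have $e^t x = 0$, so $ve^t x = v(e^t x) = 0$. Hence on $\0^n$ the operator $\phi$ acts simply as $\phi(x) = \lambda P x$.

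First I would record this simplification: for $a, b \in \0^n$, $\phi(a) = \lambda P a$ and $\phi(b) = \lambda P b$. Next, I would use Theorem \ref{HLP} to rewrite the hypothesis $a \preceq b$ as $a = Db$ for some doubly stochastic $D \in \Omega_n$. Then $Pa = PDb = (PDP^{-1})(Pb)$, and since $P D P^{-1}$ is again doubly stochastic (conjugation by a permutation preserves $\Omega_n$), Theorem \ref{HLP} gives $Pa \preceq Pb$. Equivalently, one may argue more elementarily that majorization is invariant under permutation of coordinates, since $\preceq$ depends only on the decreasing rearrangement, so $Pa \preceq Pb$ is immediate from $a \preceq b$.

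It remains to handle the scalar $\lambda$. Multiplication by $\lambda \neq 0$ preserves or reverses the order depending on sign, but in either case the majorization relation is preserved: if $\lambda > 0$ then $\lambda Pa \preceq \lambda Pb$ follows directly (scaling all partial sums by $\lambda$ preserves the inequalities and the final equality); if $\lambda < 0$ then scaling by a negative factor reverses each defining inequality, but it also reverses the decreasing rearrangement, and one checks that $u \preceq w$ implies $\lambda u \preceq \lambda w$ for $\lambda < 0$ as well — indeed this is exactly the content of Lemma \ref{lem:lambda} specialized to single columns, or can be seen from $u \preceq w \iff -w \preceq -u$. If $\lambda = 0$ then $\phi(a) = \phi(b) = 0$ and the relation is trivial. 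Combining these, $\phi(a) = \lambda P a \preceq \lambda P b = \phi(b)$, which is what we want.

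I do not expect any serious obstacle here, since the whole point of restricting to $\0^n$ is that the rank-one part $ve^t$ drops out, leaving a scaled permutation, under which majorization is manifestly invariant. The only mild care needed is the sign of $\lambda$, which is dispatched either by the observation that $u \preceq w \iff -u \preceq -w$ is \emph{false} in general but $u \preceq w \iff \lambda u \preceq \lambda w$ holds for every nonzero real $\lambda$ (the vector analogue of Lemma \ref{lem:lambda}), so I would simply invoke that fact rather than reprove it.
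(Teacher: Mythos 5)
Your proof is correct and follows essentially the same route as the paper: on $\0^n$ the rank-one part $ve^t$ vanishes, so $\phi$ reduces to $\lambda P$, under which majorization is preserved. One small aside: your parenthetical claim that ``$u \preceq w \iff -u \preceq -w$ is false in general'' is itself incorrect (negation does preserve majorization, as your own cited fact $u \preceq w \iff \lambda u \preceq \lambda w$ for all nonzero $\lambda$ shows with $\lambda=-1$), but since you rely on the correct scaling fact rather than on this remark, the argument is unaffected.
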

\begin{proof}
    Let $a, b \in \0^n$ with $a \preceq b$.

    Then $\phi(a) = ve^ta + \lambda P a = \lambda P a$. Dually, $\phi(b) = \lambda P b$. It follows that $\phi(a) \preceq \phi(b)$.
\end{proof}

Before providing the general characterization, we treat the case $n = 2$ separately.

\begin{lemma}\label{lem:0:n=2}
    Let $\phi$ be a linear operator on $\RR^2$. Then $\phi$ preserves majorization on $\0^2$ if and only if $[\phi] = ve^t + \lambda P$ for some $v \in \RR^2$, $\lambda \in \RR$ and $P \in P(2)$.
\end{lemma}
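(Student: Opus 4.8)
The plan is to prove both directions of the equivalence for the special case $n=2$. The sufficiency direction is already handled by Lemma \ref{lem:0:sufficient}, which applies for arbitrary $n$, so I would simply invoke it: if $[\phi] = ve^t + \lambda P$, then $\phi$ preserves majorization on $\0^2$. Thus the entire content of the lemma is the necessity direction, and I would concentrate all effort there.

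For necessity, I would exploit the very low dimension. Observe that $\0^2 = \{t(e_1 - e_2) : t \in \RR\}$ is one-dimensional, spanned by $e_1 - e_2$. First I would record via Lemma \ref{lem:0:preserves->0} that if $\phi$ preserves majorization on $\0^2$ then $[\phi]$ satisfies condition $(0)$, which by definition means $Xa \sim XPa$ for $a \in \0^2$ and $P \in P(2)$. I would then write $X = [\phi]$ and analyze the single vector $w := X(e_1-e_2) = X^{(1)} - X^{(2)} \in \RR^2$. By Corollary \ref{cor:0:e^tv=0=>e^tXv=0} (or directly from Corollary \ref{cor:0:e^tXi=e^tXj}, which gives $e^t X^{(1)} = e^t X^{(2)}$), the vector $w$ is itself a zero-sum vector, so $w = \beta(e_1 - e_2)$ for some $\beta \in \RR$.

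Now I would split on whether $\beta = 0$. If $\beta = 0$, then $X^{(1)} = X^{(2)}$, so by Corollary \ref{cor:0:Xi=Xj} we get $X = X^{(1)}e^t$, which is exactly of the required form $ve^t + \lambda P$ with $v = X^{(1)}$ and $\lambda = 0$ (any $P \in P(2)$). If $\beta \neq 0$, I would replace $\beta$ by $\lambda$ with a suitable choice of the permutation sign: since $e_1 - e_2 = -(e_2 - e_1)$, writing $X^{(1)} - X^{(2)} = \lambda(P^{(1)} - P^{(2)})$ is possible by taking $P = I$ if $\beta > 0$ and $P = P_{(12)}$ (with $\lambda = -\beta > 0$, or simply $\lambda = \beta$ and $P = I$) — the cleanest route is $\lambda := \beta$ and $P := I$, giving $X^{(1)} - X^{(2)} = \lambda(e_1 - e_2)$. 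Then I would define $v := X^{(2)}$ and check the identity $X = ve^t + \lambda I$ directly: its first column is $v + \lambda e_1 = X^{(2)} + \lambda e_1$ and its second is $v + \lambda e_2 = X^{(2)} + \lambda e_2$, and one verifies these equal $X^{(1)}, X^{(2)}$ using $X^{(1)} - X^{(2)} = \lambda(e_1 - e_2)$. (More symmetrically, one can set $v_i$ to be the second entry of row $i$ and read off $\lambda$ from the row difference, paralleling the construction in the proof of Theorem \ref{thm:alpha}.)

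The step I expect to require the most care is not any deep computation but rather confirming that condition $(0)$ alone—i.e. the consequence of preservation extracted in Lemma \ref{lem:0:preserves->0}—suffices to force the structural form, without needing the full strength of majorization preservation. Because $n=2$ is below the threshold $n \geq 3$ of Lemma \ref{lem:0:>=3}, I cannot cite that lemma to conclude $X$ satisfies $(\alpha)$ and then invoke Theorem \ref{thm:alpha}; instead the argument must be self-contained using only the one-dimensionality of $\0^2$. The main obstacle, then, is purely bookkeeping: ensuring the permutation/sign conventions in $ve^t + \lambda P$ are matched correctly and that the degenerate case $\beta = 0$ is absorbed cleanly into the same formula with $\lambda = 0$. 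Since everything reduces to a single zero-sum vector $w = X^{(1)} - X^{(2)}$, the proof should be short once these conventions are pinned down.
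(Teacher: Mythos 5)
Your overall route is the same as the paper's: the sufficiency is Lemma \ref{lem:0:sufficient}, and the necessity reduces, via condition $(0)$ and the equal-column-sum consequence (Corollary \ref{cor:0:e^tXi=e^tXj}, equivalently $X^{(1)}-X^{(2)}\in\0^2$), to exhibiting the decomposition by hand. The paper does exactly this, writing $X=\left(\begin{smallmatrix}x_{12}\\x_{21}\end{smallmatrix}\right)e^t+(x_{11}-x_{12})I$.

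However, your explicit verification in the case $\beta\neq 0$ does not check out as written. With $v:=X^{(2)}$ and $X^{(1)}-X^{(2)}=\lambda(e_1-e_2)$, the matrix $ve^t+\lambda I$ has first column $X^{(2)}+\lambda e_1$, which equals $X^{(1)}=X^{(2)}+\lambda e_1-\lambda e_2$ only if $\lambda=0$, and its second column $X^{(2)}+\lambda e_2$ likewise differs from $X^{(2)}$ unless $\lambda=0$. The correct choice is $v:=X^{(1)}-\lambda e_1=X^{(2)}-\lambda e_2$ (these two expressions coincide precisely because $X^{(1)}-X^{(2)}=\lambda(e_1-e_2)$), which also absorbs the case $\beta=0$ without a separate case split; in coordinates this is the paper's $v=(x_{12}\ x_{21})^t$, $\lambda=x_{11}-x_{12}$. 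Your parenthetical alternative points at the right construction (the off-diagonal entry of each row, as in Theorem \ref{thm:alpha}), but note it is the off-diagonal entry, not literally ``the second entry of row $i$'', which would again fail in row $2$. Once $v$ is corrected, the argument is complete and coincides with the paper's.
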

\begin{proof}
    Assume that $\phi$ preserves majorization on $\0^n$. Then $X = [\phi]$ satisfies $(0)$ by Lemma \ref{lem:0:preserves->0}. It follows that $e^t X^{(1)}=e^tX^{(2)}$ by Corollary \ref{cor:0:e^tXi=e^tXj}. It means that $x_{22} = x_{11} + x_{21} - x_{12}$ and $$X = \begin{pmatrix}
        x_{11} & x_{12} \\ x_{21} & x_{11} + x_{21} - x_{12}
    \end{pmatrix} = \begin{pmatrix}
        x_{12} \\ x_{21}
    \end{pmatrix} (1 \ 1) + (x_{11} - x_{12})I.$$

    Lemma \ref{lem:0:sufficient} concludes the proof.
\end{proof}

\begin{theorem}\label{thm:0}
    Let $\phi$ be a linear operator on $\RR^n$. Then $\phi$ preserves majorization on $\0^n$ if and only if $$[\phi] = ve^t + \lambda P \text{ for some } v \in \RR^n, \lambda \in \RR \text{ and } P \in P(n).$$
\end{theorem}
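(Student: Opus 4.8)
The plan is to prove both directions, with the sufficiency being immediate and the necessity requiring the structural machinery already developed. The sufficiency direction is handled directly by Lemma \ref{lem:0:sufficient}: any operator with $[\phi] = ve^t + \lambda P$ preserves majorization on $\0^n$, since on zero-sum vectors the $ve^t$ term vanishes and multiplication by $\lambda P$ is a scaling followed by a permutation, neither of which disturbs majorization. So the entire content lies in the necessity direction.

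For necessity, I would first dispose of the small cases. When $n=2$, the result is exactly Lemma \ref{lem:0:n=2}, so I may assume $n \geq 3$. The key observation is that if $\phi$ preserves majorization on $\0^n$, then by Lemma \ref{lem:0:preserves->0} the matrix $X = [\phi]$ satisfies condition $(0)$. Now I invoke the dichotomy established in Lemma \ref{lem:0:>=3}: for $n \geq 3$, a matrix satisfying $(0)$ either equals $X^{(1)}e^t$ or satisfies condition $(\alpha)$. In the first case, $X = X^{(1)}e^t$ is already of the required form $ve^t + \lambda P$ with $v = X^{(1)}$ and $\lambda = 0$ (taking $P$ to be any permutation matrix). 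In the second case, $X$ satisfies $(\alpha)$, and Theorem \ref{thm:alpha} gives precisely that $X = ve^t + \lambda P$ for some $\lambda \neq 0$, $v \in \RR^n$, and $P \in P(n)$.

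Putting these pieces together, the proof is essentially a case analysis routed through the two prior structural results. I expect the proof to be short, since the substantial work has already been carried out in establishing Lemma \ref{lem:0:>=3} (the column-similarity analysis culminating in the bound on distinct row values) and Theorem \ref{thm:alpha} (the combinatorial argument identifying the unique-entry pattern with a permutation matrix). The main subtlety — and the only genuine obstacle — is simply recognizing that the degenerate case $X = X^{(1)}e^t$ is subsumed by the general form by allowing $\lambda = 0$, so that the two branches of the dichotomy unify into the single stated conclusion. No further computation is needed beyond citing Lemmas \ref{lem:0:sufficient}, \ref{lem:0:preserves->0}, \ref{lem:0:n=2}, \ref{lem:0:>=3}, and Theorem \ref{thm:alpha}.
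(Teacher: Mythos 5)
Your proposal is correct and follows essentially the same route as the paper's own proof: reduce to $n\geq 3$ via Lemma \ref{lem:0:n=2}, pass to condition $(0)$ via Lemma \ref{lem:0:preserves->0}, apply the dichotomy of Lemma \ref{lem:0:>=3} together with Theorem \ref{thm:alpha}, and close the sufficiency with Lemma \ref{lem:0:sufficient}. The observation that the degenerate case $X = X^{(1)}e^t$ is absorbed by taking $\lambda = 0$ is exactly how the paper unifies the two branches.
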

\begin{proof}
    The case $n = 2$ was settled in Lemma \ref{lem:0:n=2}. Thus we only consider $n \geq 3$.

    Assume that $\phi$ preserves majorization on $\0^n$. Then $X = [\phi]$ satisfies $(0)$ by Lemma \ref{lem:0:preserves->0}. By Lemma \ref{lem:0:>=3}, one of the following holds:
    \begin{enumerate}
        \item $X = X^{(1)}e^t = X^{(1)}e^t + 0 I$.

        \item $X$ satisfies $(\alpha)$. In this case, $[\phi] = X = ve^t + \lambda P$ for some $v \in \RR^n$, $\lambda \neq 0$ and $P \in P(n)$ by Theorem \ref{thm:alpha}.
    \end{enumerate}

    Lemma \ref{lem:0:sufficient} concludes the proof.
\end{proof}

    Operators in Theorem \ref{thm:0} provide a better understanding of the nature of Ando's operators, see Theorem \ref{Ando}. The two types of Ando's operators are just particular cases of the natural general structure $ve^t + \lambda P$ that describes linear preservers of majorization on $\0^n$. Namely, Ando's operators of the first type are obtained by letting $\lambda = 0$, while Ando's operators of the second type are obtained by letting $v = \gamma e$ for some $\gamma \in \RR$.

\section{Linear operators preserving strong majorization for column stochastic matrices}\label{sec:Result}

In this section, we give a complete characterization of linear operators preserving strong majorization for column stochastic matrices. We consider a standard decomposition of a linear operator on $M_{n, m}$ into $m^2$ linear operators on $\RR^n$.

\begin{definition}\label{def:decomposition}\rm
    For a linear operator $\Phi$ on $M_{n, m}$ we consider its decomposition into $m^2$ linear operators $\Phi^j_i$, $i, j\in \M$ on $\RR^n$:

$$\Phi(X) = \left(\begin{array}{c|c|c}\sum\limits_{j=1}^{m}\Phi^j_1(X^{(j)}) \ & \ \ldots \ & \ \sum\limits^{m}_{j=1}\Phi^j_m(X^{(j)})\end{array}\right).$$
    
\end{definition}

This is a standard decomposition, but for the sake of completeness, we provide the following

\begin{lemma}\label{lem:Phi:decomposition}
    The decomposition of $\Phi$ into operators $\Phi^j_i$ always exists and it is unique. Namely, $\Phi^j_i(v)=\Phi(ve_j^t)e_i$ for any $v \in \RR^n$.
\end{lemma}
\begin{proof}
    For any $i, j \in \M$ consider $\Phi^j_i$, defined by $\Phi^j_i(v)=\Phi(ve_j^t)e_i$. Then, due to the linearity of $\Phi$, we obtain 
    
    $$\Phi(X) = \sum\limits_{j = 1}^{m}\Phi(X^{(j)}e^t_j) =  \left(\begin{array}{c|c|c}\sum\limits_{j = 1}^{m}\Phi(X^{(j)}e^t_j)^{(1)} \ & \ \ldots \ & \
    \sum\limits_{j = 1}^{m}\Phi(X^{(j)}e^t_j)^{(m)}\end{array}\right) = \left(\begin{array}{c|c|c}\sum\limits_{j=1}^{m}\limits\Phi^j_1(X^{(j)}) \ & \ \ldots \ & \ \sum^{m}_{j=1}\limits\Phi^j_m(X^{(j)})\end{array}\right),$$ which gives the desired decomposition.

    On the other hand, consider an arbitrary decomposition $$\Phi(X) = \left(\begin{array}{c|c|c}\sum\limits_{j=1}^{m}\limits\Phi^j_1(X^{(j)}) \ & \ \ldots \ & \ \sum^{m}_{j=1}\limits\Phi^j_m(X^{(j)})\end{array}\right).$$

    Then for any $v\in \RR^n$ and any $i, j \in \M$ we have $$\Phi(ve^t_j)e_i = \left(\begin{array}{c|c|c}\Phi^j_1(v)  \ & \  \ldots  \ & \  \Phi^j_m(v)\end{array}\right)e_i = \Phi^j_i(v).$$

    Therefore, the decomposition is unique.
\end{proof}

Assume that $\Phi$ preserves strong majorization for column stochastic matrices. As it turns out, we cannot say that each $\Phi^j_i$ preserves majorization for probability distributions. An analogous reduction was possible from preservers of strong majorization to preservers of vector majorization, see \cite[Theorem 2]{LiPoon2001}. Also, from preservers of strong majorization for $(0, 1)$-matrices to majorization for $(0, 1)$-vectors, see \cite[Lemma 5.5]{GS3_preservers_01}. The same approach fails here, because it relies on matrices with exactly one nonzero column, see \cite{LiPoon2001}. For $m > 1$ these matrices are not column stochastic and the operator $\Phi$ does not have to preserve majorization for such matrices. We shall indeed see in Example \ref{ex:last} that $\Phi^j_i$ does not necessarily preserve majorization for probability distributions. 

However, we will discover that every $\Phi^j_i$ preserves majorization for the zero-sum vectors. This will be shown in Section \ref{subsec:Phi^j_k:preserves:0}. Moreover, in the following lemma we show that the operators $\sum\limits_{j = 1}^{m} \Phi^j_k$ do preserve majorization on $\1^n$. Recall that due to Theorem \ref{thm:vector} this is equivalent to preserving vector majorization in general.

\begin{lemma}\label{lem:1:sum_preserves}
    Let $\Phi$ be a linear operator on $M_{n, m}$ preserving strong majorization on $\Omega^{col}_{n, m}$. Then $\sum\limits_{j = 1}^{m} \Phi^j_k$ preserves vector majorization for any $k \in \M$.
\end{lemma}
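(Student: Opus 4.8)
The plan is to exploit the fact that the operators $\sum_{j=1}^m \Phi^j_k$ arise naturally when $\Phi$ acts on matrices whose columns are all equal to a single probability distribution, and that for such matrices strong majorization on column stochastic matrices reduces to vector majorization. First I would observe that for any $v \in \1^n$, the matrix $ve^t \in \Omega^{col}_{n,m}$, since each of its $m$ columns equals $v$ and hence is a probability distribution. Feeding such matrices into the decomposition from Definition \ref{def:decomposition}, the $k$-th column of $\Phi(ve^t)$ is exactly $\sum_{j=1}^m \Phi^j_k(v)$, which is the quantity we want to control.

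Next I would set up the majorization input. Suppose $a \preceq b$ for probability distributions $a, b \in \1^n$. The key step is to promote this vector majorization to a strong majorization of the associated rank-one-column-pattern matrices $ae^t$ and $be^t$. Since $a \preceq b$, by Theorem \ref{HLP} there is a doubly stochastic $D \in \Omega_n$ with $a = Db$, and then $ae^t = D(be^t)$, so $ae^t \preceq^s be^t$ by Definition \ref{def:strong}. Both $ae^t$ and $be^t$ lie in $\Omega^{col}_{n,m}$, so the hypothesis that $\Phi$ preserves strong majorization on $\Omega^{col}_{n,m}$ yields $\Phi(ae^t) \preceq^s \Phi(be^t)$.

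The final step is to descend from this matrix strong majorization back to a vector majorization of the $k$-th columns. Because $\Phi(ae^t) \preceq^s \Phi(be^t)$ means $\Phi(ae^t) = Q\,\Phi(be^t)$ for some $Q \in \Omega_n$, examining the $k$-th column gives $\Phi(ae^t)^{(k)} = Q\,\Phi(be^t)^{(k)}$, i.e. $\left(\sum_{j=1}^m \Phi^j_k\right)(a) = Q\left(\sum_{j=1}^m \Phi^j_k\right)(b)$, and Theorem \ref{HLP} converts this into $\left(\sum_{j=1}^m \Phi^j_k\right)(a) \preceq \left(\sum_{j=1}^m \Phi^j_k\right)(b)$. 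This shows $\sum_{j=1}^m \Phi^j_k$ preserves majorization on $\1^n$, and by Theorem \ref{thm:vector} it therefore preserves vector majorization in full generality.

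I expect the only subtle point to be making sure that the single doubly stochastic matrix $Q$ (resp.\ $D$) can be applied columnwise uniformly; this is automatic because the majorization $A \preceq^s B$ is witnessed by one $Q \in \Omega_n$ acting simultaneously on all columns, so passing to any individual column is immediate. The reduction of $a \preceq b$ to $ae^t \preceq^s be^t$ is equally clean since the same $D$ works for every column. Thus there is no genuine obstacle; the proof is a short passage through the two directions of Theorem \ref{HLP} combined with the columnwise reading of Definition \ref{def:decomposition}, with Theorem \ref{thm:vector} supplying the final upgrade from $\1^n$ to all of $\RR^n$.
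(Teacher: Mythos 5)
Your proposal is correct and follows essentially the same route as the paper: form $A=ae^t$, $B=be^t\in\Omega^{col}_{n,m}$, note $A\preceq^s B$, apply $\Phi$, read off the $k$-th column as $\bigl(\sum_{j=1}^m\Phi^j_k\bigr)(a)\preceq\bigl(\sum_{j=1}^m\Phi^j_k\bigr)(b)$, and finish with Theorem \ref{thm:vector}. The extra detail you supply about the single witnessing matrix $D$ (resp.\ $Q$) acting uniformly on all columns is exactly the justification the paper leaves implicit.
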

\begin{proof}
    Consider arbitrary $a, b \in \1^n$ with $a \preceq b$. Let $A = ae^t, B = be^t$. Then $A \preceq^s B$. Therefore $\Phi(A) \preceq^s \Phi(B)$, as $A, B \in \Omega^{col}_{n, m}$. In particular, $\Phi(A)^{(k)}\preceq \Phi(B)^{(k)}$.

    Further, $\Phi(A)^{(k)} = \sum\limits_{j = 1}^m\Phi^j_k(A^{(j)}) = (\sum\limits_{j = 1}^m\Phi^j_k)(a)$. Dually, $\Phi(B)^{(k)} = (\sum\limits_{j = 1}^m\Phi^j_k)(b)$. Then $(\sum\limits_{j = 1}^m\Phi^j_k)(a) \preceq (\sum\limits_{j = 1}^m\Phi^j_k)(b)$. It follows that $\sum\limits_{j = 1}^{m} \Phi^j_k$ preserves majorization on $\1^n$. Finally, $\sum\limits_{j = 1}^{m} \Phi^j_k$ preserves vector majorization by Theorem \ref{thm:vector}.
\end{proof}

\subsection{Every $\Phi^j_k$ preserves majorization on $\0^n$}\label{subsec:Phi^j_k:preserves:0}

We start by observing that the necessary condition in Lemma \ref{lem:0:preserves->0} is actually sufficient.

\begin{lemma}\label{lem:0:prec-sim}
    Let $\phi$ be a linear operator on $\RR^n$. Then $\phi$ preserves majorization on $\0^n$ if and only if $\phi(a)\sim \phi(Pa)$ for every $a \in \0^n$ and every $P \in P(n)$ .
\end{lemma}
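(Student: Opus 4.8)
The statement to prove is Lemma \ref{lem:0:prec-sim}, asserting the equivalence of two conditions on a linear operator $\phi$ on $\RR^n$: that $\phi$ preserves majorization on $\0^n$, and that $\phi(a)\sim\phi(Pa)$ for every $a\in\0^n$ and every $P\in P(n)$.

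\medskip

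The plan is to prove this by combining the necessary condition already established in Lemma \ref{lem:0:preserves->0} with the full classification of matrices satisfying $(0)$ obtained through Theorem \ref{thm:0}. The forward direction is immediate: if $\phi$ preserves majorization on $\0^n$, then for any $a\in\0^n$ and $P\in P(n)$ we have both $a\preceq Pa$ and $Pa\preceq a$ (since $a$ and $Pa$ are permutations of one another), hence $\phi(a)\preceq\phi(Pa)\preceq\phi(a)$, which is precisely $\phi(a)\sim\phi(Pa)$. This is exactly the content of the computation in Lemma \ref{lem:0:preserves->0}, restated in terms of the $\sim$ relation.

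\medskip

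For the converse, the key observation is that the hypothesis $\phi(a)\sim\phi(Pa)$ for all $a\in\0^n$ and all $P\in P(n)$ is literally the statement that $X=[\phi]$ satisfies condition $(0)$, since $\phi(a)=Xa$ and $\phi(Pa)=XPa$. Thus I would first note that the assumption says nothing other than $[\phi]$ satisfies $(0)$. Then I would invoke the structural results: by Lemma \ref{lem:0:>=3} (for $n\geq 3$) together with Lemma \ref{lem:0:n=2} (for $n=2$), a matrix satisfying $(0)$ is either of the form $X^{(1)}e^t$ or satisfies $(\alpha)$, and in the latter case Theorem \ref{thm:alpha} gives $X=ve^t+\lambda P$ for some $v\in\RR^n$, $\lambda\neq 0$, and $P\in P(n)$. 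In the former case, $X=X^{(1)}e^t=X^{(1)}e^t+0\cdot I$ is already of the form $ve^t+\lambda P$ with $\lambda=0$. In either case $[\phi]=ve^t+\lambda P$, and then Lemma \ref{lem:0:sufficient} shows directly that $\phi$ preserves majorization on $\0^n$.

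\medskip

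I expect no genuine obstacle here, since all the heavy lifting has already been done: the entire chain from Lemma \ref{lem:0:Xi-Xj_sim_Xk-Xl} through Theorem \ref{thm:0} establishes that condition $(0)$ forces the structure $ve^t+\lambda P$, and Lemma \ref{lem:0:sufficient} supplies the sufficiency of that structure. The only point requiring care is recognizing that the hypothesis of the present lemma is a verbatim restatement of condition $(0)$, so that the classification applies without any additional argument; the proof is therefore essentially a bookkeeping assembly of the preceding results rather than a new computation. An efficient phrasing would simply chain the equivalences: $\phi$ preserves majorization on $\0^n$ $\iff$ $[\phi]$ satisfies $(0)$ $\iff$ $[\phi]=ve^t+\lambda P$, where the two backward implications are Theorem \ref{thm:0} and Lemma \ref{lem:0:sufficient}, and the forward one is Lemma \ref{lem:0:preserves->0}.
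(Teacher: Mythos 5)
Your proof is correct, but it takes a genuinely different route from the paper's. The paper proves the sufficiency directly, without invoking the structural classification: given $a \preceq b$ in $\0^n$, it writes $a = Qb$ with $Q \in \Omega_n$ (Theorem \ref{HLP}), decomposes $Q = \sum_i \lambda_i P_i$ as a convex combination of permutation matrices (Theorem \ref{Birkhoff}), uses the hypothesis together with Lemma \ref{lem:vector:symmetry} to replace each $\phi(P_i b)$ by $P'_i\phi(b)$ for some $P'_i \in P(n)$, and concludes that $\phi(a) = \bigl(\sum_i \lambda_i P'_i\bigr)\phi(b) \preceq \phi(b)$ since $\sum_i \lambda_i P'_i \in \Omega_n$. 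You instead observe that the hypothesis is a verbatim restatement of condition $(0)$ and push $[\phi]$ through the classification chain (Lemma \ref{lem:0:>=3}, Theorem \ref{thm:alpha}, Lemma \ref{lem:0:sufficient}) to land on the form $ve^t + \lambda P$. Since Lemma \ref{lem:0:prec-sim} appears after Theorem \ref{thm:0} in the paper and none of the classification results depend on it, your argument is not circular and is valid; the one small point to patch is that for $n=2$ Lemma \ref{lem:0:n=2} is stated with the hypothesis ``preserves majorization on $\0^2$'' rather than ``satisfies $(0)$'', so you should instead cite Corollary \ref{cor:0:e^tXi=e^tXj} (which is all its proof actually uses) to extract the form from condition $(0)$ alone. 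What the paper's route buys is independence from the heavy machinery: its convexity argument is short, elementary, and would remain valid if the lemma were stated and proved immediately after Lemma \ref{lem:0:preserves->0}, which is where the paper first announces it. What your route buys is economy of exposition once Theorem \ref{thm:0} is already in hand.
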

\begin{proof}

    The necessity follows from Lemma \ref{lem:0:preserves->0}. We only need to prove the sufficiency.

    Assume that $\phi(x)\sim \phi(Px)$ for any $x \in \0^n$ and $P \in P(n)$. Consider arbitrary $a, b \in \0^n$ with $a \preceq b$. Our goal is to prove that $\phi(a)\preceq \phi(b)$. Observe that $a = Qb$ for some $Q \in \Omega_n$ by Theorem \ref{HLP}.

    On the other hand, by Theorem \ref{Birkhoff}, $$Q = \lambda_1 P_1 + \ldots \lambda_k P_k \text{ for some } (\lambda_1 \ \ldots \ \lambda_k)^t \in \1^k \text{ and } P_1, \ldots, P_k\in P(n).$$ It follows that $a = \sum\limits_{i=1}^{k}\lambda_i P_i b$.

    Due to the conditions on $\phi$ we obtain that for any $i \in \{1, \ldots, k\}$ there exists $P'_i \in P(n)$ such that $\phi(P_ib) = P'_i\phi(b)$. Therefore, $\phi(a) = (\sum\limits_{i=1}^{k} \lambda_i P'_i) \phi(b)$.

    As $(\lambda_1, \ \ldots, \lambda_k)^t \in \1^k$, we conclude that $\sum\limits_{i=1}^{k} \lambda_i P'_i \in \Omega_n$ and thus $\phi(a) \preceq \phi(b)$. Finally, $\phi$ preserves vector majorization on $\0^n$.
\end{proof}

In Lemma \ref{lem:s+phi->preserves_0<=1} and Corollary \ref{cor:s+phi->preserves_0<=1/n} we provide new sufficient conditions for preserving majorization on $\0^n$. The following technical lemma allows us to do so.

Recall that $a^+$ denotes the sum of the positive entries of $a \in \RR^n$. That is, $a^+ = \sum\limits_{i=1}^n \max(a_i, 0)$.

\begin{lemma}\label{lem:1a:saves0}
    Let $s \in \RR^m$, $X \in M_{m, n}$ be such that $s + Xa \sim s + XPa$ holds for any $a\in \0^n$ with $0 < a^+ \leq 1$ and any $P \in P(n)$.

    Then $Xa \sim XPa$ holds for any $a\in \0^n$ with $0 < a^+ \leq 1$ and any $P \in P(n)$.
\end{lemma}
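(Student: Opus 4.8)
The plan is to exploit that $\0^n$ is a linear subspace, hence closed under scaling, while the normalization $0 < a^+ \le 1$ is preserved under multiplication by any $t \in (0,1]$. Fix $a \in \0^n$ with $0 < a^+ \le 1$ and $P \in P(n)$. For every $t \in (0,1]$ the vector $ta$ again lies in $\0^n$ and satisfies $0 < (ta)^+ = t\,a^+ \le 1$, so the hypothesis applies to $ta$ in place of $a$ and gives
$$s + t\,Xa \ \sim\ s + t\,XPa \qquad \text{for all } t \in (0,1].$$
In this way a single instance of the hypothesis spreads into a one-parameter family of $\sim$-relations, from which the desired relation $Xa \sim XPa$ is to be read off as $t$ grows.

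To make this precise I would turn $\sim$ into an algebraic identity. Write $y = Xa$ and $z = XPa$. By Lemma \ref{lem:vector:symmetry}, $s + ty \sim s + tz$ means that these two vectors of $\RR^m$ carry the same multiset of coordinates; in particular all their power sums agree, so for every integer $k \ge 1$ and every $t \in (0,1]$ one has $\sum_{i=1}^m (s_i + t y_i)^k = \sum_{i=1}^m (s_i + t z_i)^k$. Each side is a polynomial in $t$ of degree $k$, and the identity holds on the whole interval $(0,1]$, hence identically in $t$. Comparing the coefficient of the top power $t^k$ annihilates every term containing $s$ and leaves exactly $\sum_{i=1}^m y_i^k = \sum_{i=1}^m z_i^k$.

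Applying this for $k = 1, \dots, m$ yields equality of the first $m$ power sums of $y$ and $z$, which by Newton's identities (we are in characteristic zero) forces equality of all $m$ elementary symmetric functions. Consequently the monic polynomials $\prod_{i=1}^m (T - y_i)$ and $\prod_{i=1}^m (T - z_i)$ coincide, so $y$ and $z$ have the same multiset of real entries; by Lemma \ref{lem:vector:symmetry} this is precisely $Xa \sim XPa$, as claimed.

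The verifications that $ta$ stays admissible and that the power sums $p_1, \dots, p_m$ determine a size-$m$ multiset are routine. The conceptual crux --- and the step I expect to require the most care --- is the elimination of the nuisance shift $s$: it is the scaling trick combined with isolating the \emph{leading} coefficient in $t$ that decouples $s$ from $y$ and $z$, whereas evaluating the relation at any fixed single value of $t$ would leave $s$ entangled with the data one wants to compare.
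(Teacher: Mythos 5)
Your proof is correct, but it takes a genuinely different route from the paper. Both arguments begin with the same observation: since $ta$ remains admissible for $t\in(0,1]$, a single hypothesis instance spreads into the one-parameter family $s+tXa\sim s+tXPa$. From there the paper argues by induction on $m$: writing $\mc{I}$ for the set of indices where $s$ attains its maximum, it chooses $t=1/M$ small enough that the $|\mc{I}|$ largest entries of both $s+tXa$ and $s+tXPa$ must sit in the rows indexed by $\mc{I}$; the equivalence then splits into one on $\mc{I}$ (where $s$ is constant and can be cancelled directly) and one on $\M\setminus\mc{I}$ (handled by the induction hypothesis), and the two pieces are reassembled. You instead convert $\sim$ into the equality of all power sums $\sum_i(s_i+ty_i)^k=\sum_i(s_i+tz_i)^k$, observe that each side is a polynomial in $t$ agreeing on an infinite set, extract the leading coefficient to decouple $s$, and invoke Newton's identities to recover the multisets of $y=Xa$ and $z=XPa$. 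Your argument is shorter, avoids induction entirely, and isolates the role of $s$ very cleanly; its one external ingredient is the standard fact that the first $m$ power sums determine a size-$m$ real multiset. The paper's argument stays entirely within elementary majorization-style reasoning (comparing maxima and restricting $\sim$ to index sets), which is more in keeping with the toolkit used elsewhere in the paper, at the cost of an induction and a more delicate choice of the scaling parameter. Both are valid; no gap in yours.
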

\begin{proof}

    If $s = \lambda e$ for some $\lambda \in \RR$, then $s + Xv \sim s +XPv$ is equivalent to $Xv\sim XPv$ for any $v \in \RR^n$ and $X \in M_{m, n}$.

    Thus in the following, we may assume that $s \neq \lambda e$. That is, $\max(s) > \min(s)$. Let $\mc{I} = \{i \in \M \ : \ s_i = \max(s)\}$. Observe that $1 \leq |\mc{I}| < m$.
    
    We prove the lemma by induction on $m$. For $m = 1$ the result is trivial, as it is a particular case of $s = \lambda e$. Assume that the result is proved for all $k < m$.

    Consider arbitrary $a \in \0^n$ with $a^+ \leq 1$ and $P \in P(n)$. 
    Let us show that $Xa\sim XPa$.
    
    Let us choose a sufficiently large $N \in \NN$ such that $N > \max\limits_{i}|(XPa)_i|$ and $N > \max\limits_{i}|(Xa)_i|$. Let us choose $M\in \NN$ such that $M \geq 1$ and $(\max(s) - s_q)M > 2N$ for any $q \in \M \setminus \mc{I}$.
    
    Observe that $\frac{1}{M}a, \frac{1}{M}Pa \in \0^n$ and, since $M\geq 1$, $(\frac{1}{M}a)^+ \leq 1$. Hence $s + \frac{1}{M}Xa \sim s + \frac{1}{M}XPa$.

    Consider arbitrary $i \in \mc{I}$ and $q \in \M \setminus \mc{I}$. Then $$(s+\frac{1}{M}XPa)_i = \max(s) + \frac{1}{M}(XPa)_i \geq \max(s) - \frac{N}{M}.$$

    On the other hand, $$(s+\frac{1}{M}XPa)_q = s_q + \frac{1}{M}(XPa)_q \leq s_q + \frac{N}{M}.$$

    At the same time, $\max(s) - \frac{N}{M} - (s_q + \frac{N}{M}) = (\max(s) - s_q) - \frac{2N}{M} > 0$ by the choice of $M$. It follows that $(s+\frac{1}{M}XPa)_i > (s+\frac{1}{M}XPa)_q$. Similarly, $(s+\frac{1}{M}Xa)_i > (s+\frac{1}{M}Xa)_q$.

    Consider again the equivalence
    
    \begin{equation}\label{eq:sim}
        s + \frac{1}{M}Xa\sim s+\frac{1}{M}XPa
    \end{equation} By proven above, the $|\mc{I}|$ largest entries of the left hand side and the right hand side are located in the same rows, namely, the rows indexed by $\mc{I}$.

    Therefore Equivalence \eqref{eq:sim} remains true when restricted to $\mc{I}$. That is,
    
    \begin{equation}\label{eq:sim_I:with_s}
        s_\mc{I} + (\frac{1}{M}Xa)_{\mc{I}}\sim s_\mc{I} + (\frac{1}{M}XPa)_{\mc{I}}.
    \end{equation}
    
    But $s_\mc{I} = \max(s)e$. As a consequence, Relation \eqref{eq:sim_I:with_s} is equivalent to \begin{equation}\label{eq:_I}
        (Xa)_{\mc{I}}\sim (XPa)_{\mc{I}}
    \end{equation}

    Thus we obtain $s + Xa\sim s + XPa$ and $(s + Xa)_{\mc{I}}\sim (s + XPa)_{\mc{I}}$. It follows that $(s + Xa)_{\M\setminus\mc{I}}\sim (s + XPa)_{\M\setminus\mc{I}}$. Recall that $a \in \0^n$ with $0 < a^+ \leq 1$ and $P\in P(n)$ are arbitrary. 
    
    In other words, the equality $$(s + Xa)_{\M\setminus\mc{I}}\sim (s + XPa)_{\M\setminus\mc{I}}$$ holds for any $a \in \0^n$ with $0 < a^+ \leq 1$ and any $P\in P(n)$. As $1 \leq |\mc{I}|$, we obtain that $|\M\setminus \mc{I}| < m$. Therefore the induction hypothesis is satisfied for $s_{\M\setminus\mc{I}}$ and $X_{\M\setminus \mc{I}}$. Thus $(Xa)_{\M\setminus\mc{I}}\sim (XPa)_{\M\setminus\mc{I}}$. Combining this with Equivalence \eqref{eq:_I} we obtain that $Xa\sim XPa$.
\end{proof}

This allows us to prove the following sufficient condition.
\begin{lemma}\label{lem:s+phi->preserves_0<=1}
    Let $\phi$ be a linear operator on $\RR^n$. Let $s \in \RR^n$. Assume that $s + \phi(a)\sim s + \phi(Pa)$ for any $a \in \0^n$ with $0 < a^+ \leq 1$ and any $P \in P(n)$. Then $\phi$ preserves majorization on $\0^n$.
\end{lemma}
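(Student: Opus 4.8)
Looking at Lemma \ref{lem:s+phi->preserves_0<=1}, I need to prove that if $s + \phi(a) \sim s + \phi(Pa)$ holds for all $a \in \0^n$ with $0 < a^+ \leq 1$ and all $P \in P(n)$, then $\phi$ preserves majorization on $\0^n$.

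The plan is to reduce to the characterization already established. First I would invoke Lemma \ref{lem:0:prec-sim}, which tells us that $\phi$ preserves majorization on $\0^n$ if and only if $\phi(a) \sim \phi(Pa)$ for every $a \in \0^n$ and every $P \in P(n)$. So it suffices to establish this similarity for all zero-sum vectors. The hypothesis gives me the analogous statement with $s$ added and with the normalization $0 < a^+ \leq 1$, so there are two gaps to close: removing the additive shift $s$, and removing the size restriction on $a^+$.

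The main tool for removing $s$ is the preceding technical lemma, Lemma \ref{lem:1a:saves0}. Setting $X = [\phi]$ (so that $\phi(a) = Xa$ and $\phi(Pa) = XPa$), the hypothesis is exactly the premise of Lemma \ref{lem:1a:saves0} with $m = n$. Applying it, I conclude that $Xa \sim XPa$, i.e. $\phi(a) \sim \phi(Pa)$, holds for any $a \in \0^n$ with $0 < a^+ \leq 1$ and any $P \in P(n)$. This clears the additive shift in one step, leaving only the normalization to address.

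Removing the restriction $0 < a^+ \leq 1$ is the homogeneity argument and should be routine. For an arbitrary nonzero $a \in \0^n$, its positive part $a^+$ is strictly positive, so $\tilde{a} = a / a^+$ satisfies $\tilde{a} \in \0^n$ and $\tilde{a}^+ = 1$, placing it in the allowed range. By the previous step $\phi(\tilde{a}) \sim \phi(P\tilde{a})$, and since $\phi$ is linear and similarity $\sim$ is invariant under positive scaling (it is equality up to permutation by Lemma \ref{lem:vector:symmetry}, and scaling both sides by $a^+ > 0$ preserves this), multiplying through by $a^+$ yields $\phi(a) \sim \phi(Pa)$. The case $a = 0$ is trivial since then $\phi(a) = \phi(Pa) = 0$. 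Thus $\phi(a) \sim \phi(Pa)$ for every $a \in \0^n$ and every $P \in P(n)$, and Lemma \ref{lem:0:prec-sim} finishes the proof.

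I do not anticipate a serious obstacle here: the conceptual weight was already carried by Lemma \ref{lem:1a:saves0}, whose inductive argument strips the shift $s$, and by Lemma \ref{lem:0:prec-sim}, which converts the similarity condition into genuine majorization preservation. The only point demanding a little care is confirming that scaling by $a^+$ legitimately transfers the similarity relation, which follows because $\sim$ means equality up to a permutation and permutation commutes with positive scalar multiplication.
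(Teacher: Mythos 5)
Your proposal is correct and follows essentially the same route as the paper's proof: apply Lemma \ref{lem:1a:saves0} to $X=[\phi]$ to strip the shift $s$, rescale an arbitrary nonzero $b\in\0^n$ by $1/b^+$ to land in the normalized range (handling $b=0$ trivially), and conclude via Lemma \ref{lem:0:prec-sim}. No gaps.
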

\begin{proof}    
    Let $X$ denote $[\phi]$. Then $s + Xa \sim s + XPa$ for any $a \in \0^n$ with $0 < a^+ \leq 1$ and any $P \in P(n)$.

    Therefore, by Lemma \ref{lem:1a:saves0}, $\phi(a) = Xa \sim XPa = \phi(Pa)$ also holds for any $a \in \0^n$ with $0 < a^+ \leq 1$ and any $P \in P(n)$.
    
    Let us prove that $\phi(b)\sim \phi(Pb)$ for any $b \in \0^n$ and any $P \in P(n)$.
    
     If $b = 0$, then $\phi(b) = \phi(Pb) = 0$ for any $P \in P(n)$.

     Let $b \in \0^n$, $b \neq 0$. Then there is at least one positive entry in $b$. Therefore $b^+ > 0$ and we can consider $a = \frac{1}{b^+} b$. Observe that $a \in \0^n$ and $a^+ = 1$. Then $\phi(a)\sim \phi(Pa)$ for any $P \in P(n)$. On the other hand, $\phi(a)\sim \phi(Pa)$ if and only if $\phi(b)\sim \phi(Pb)$. 
     
     Finally, $\phi(b)\sim \phi(Pb)$ for any $b \in \0^n$ and any $P \in P(n)$. By Lemma \ref{lem:0:prec-sim} this is equivalent to the fact that $\phi$ preserves majorization on $\0^n$.
\end{proof}

We now present a refinement of the previous lemma.

\begin{corollary}\label{cor:s+phi->preserves_0<=1/n}
    Let $\phi$ be a linear operator on $\RR^n$. Let $s \in \RR^n$. Assume that $s + \phi(a)\sim s + \phi(Pa)$ for any $a \in \0^n$ with $0 < a^+ \leq \frac{1}{n}$ and any $P \in P(n)$. Then $\phi$ preserves vector majorization on $\0^n$.
\end{corollary}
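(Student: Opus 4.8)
The plan is to exploit the fact that the threshold $\tfrac1n$ is, for the purposes of the argument, just as good as the threshold $1$ appearing in Lemma \ref{lem:s+phi->preserves_0<=1}. The reason is structural: every rescaling carried out in the proofs of Lemmas \ref{lem:1a:saves0} and \ref{lem:s+phi->preserves_0<=1} multiplies the argument by a factor $\tfrac1M$ with $M\ge 1$, and such a factor can only \emph{shrink} the positive part $a^+$. Consequently the admissible set of test vectors may be narrowed from $\{0<a^+\le 1\}$ to $\{0<a^+\le \tfrac1n\}$ without disrupting anything. Concretely, I would first re-run the argument of Lemma \ref{lem:1a:saves0}, with $X=[\phi]$, the given $s$, and the constant $1$ replaced everywhere by $\tfrac1n$, to establish that $\phi(a)\sim\phi(Pa)$ for every $a\in\0^n$ with $0<a^+\le\tfrac1n$ and every $P\in P(n)$.

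For that induction (on the number of rows $m$, with the ambient dimension $n$ of $\0^n$ — and hence the threshold $\tfrac1n$ — held fixed), the threshold enters in exactly two places, and I would verify that both survive the weakening. First, the rescaling step chooses $M\in\NN$ with $M\ge 1$ and applies the hypothesis to $\tfrac1M a$; since $(\tfrac1M a)^+=\tfrac1M a^+\le\tfrac1n$ whenever $a^+\le\tfrac1n$ and $M\ge 1$, the hypothesis of the corollary applies verbatim. Second, the proof also uses the hypothesis at scale $1$, namely $s+\phi(a)\sim s+\phi(Pa)$, directly for the given $a$; this too is available because $a^+\le\tfrac1n$. Everything else in that proof — splitting along $\mc{I}=\{i:s_i=\max(s)\}$, locating the largest entries, and the multiset bookkeeping on $\mc{I}$ and $\M\setminus\mc{I}$ — is scale-free, so it carries over unchanged and yields $\phi(a)\sim\phi(Pa)$ on the set $\{0<a^+\le\tfrac1n\}$.

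Finally I would remove the size restriction exactly as in the proof of Lemma \ref{lem:s+phi->preserves_0<=1}: for $b=0$ the relation $\phi(b)\sim\phi(Pb)$ is trivial, while for nonzero $b\in\0^n$ one has $b^+>0$, so $a=\tfrac{1}{n\,b^+}\,b\in\0^n$ satisfies $a^+=\tfrac1n$; since $\sim$ is invariant under multiplication by a positive scalar, $\phi(a)\sim\phi(Pa)$ is equivalent to $\phi(b)\sim\phi(Pb)$. Thus $\phi(b)\sim\phi(Pb)$ for all $b\in\0^n$ and all $P\in P(n)$, and Lemma \ref{lem:0:prec-sim} then gives that $\phi$ preserves majorization on $\0^n$. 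The only genuine point requiring care — and the step I would write out most explicitly — is the first checkpoint above: confirming that shrinking the set of admissible arguments does not break the inductive rescaling, precisely because rescaling by $\tfrac1M$ with $M\ge 1$ never increases $a^+$.
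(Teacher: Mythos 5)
Your argument is correct, but it takes a more laborious route than the paper's. You re-open the proof of Lemma \ref{lem:1a:saves0}, replace the threshold $1$ by $\tfrac{1}{n}$ throughout, verify that the two places where the threshold is used (the rescaling by $\tfrac{1}{M}$ with $M\ge 1$, and the direct application of the hypothesis at the given $a$) survive the restriction, and then redo the wrap-up of Lemma \ref{lem:s+phi->preserves_0<=1} with the new normalization $a=\tfrac{1}{n\,b^+}b$. All of this checks out — in particular you correctly note that the induction is on the output dimension while $n$ and the threshold stay fixed, and that rescaling by $\tfrac{1}{M}$ never increases $a^+$. The paper instead treats Lemma \ref{lem:s+phi->preserves_0<=1} as a black box: given $b\in\0^n$ with $0<b^+\le 1$, set $a=\tfrac{1}{n}b$, so $a^+\le\tfrac{1}{n}$ and the hypothesis gives $s+\tfrac{1}{n}\phi(b)\sim s+\tfrac{1}{n}\phi(Pb)$ by linearity; multiplying by $n$ (which preserves $\sim$) yields $ns+\phi(b)\sim ns+\phi(Pb)$, i.e.\ exactly the hypothesis of Lemma \ref{lem:s+phi->preserves_0<=1} with $s$ replaced by $ns$. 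That three-line reduction is the observation your write-up misses: the shift vector $s$ in the lemma is arbitrary, so the factor $n$ can simply be absorbed into it, and no part of the earlier proofs needs to be re-examined. Your version buys nothing extra here, though it does confirm that the threshold in Lemma \ref{lem:1a:saves0} could be weakened to any positive constant if one ever needed that.
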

\begin{proof}
    Consider arbitrary $b \in \0^n$ with $b^+ \leq 1$. Let $a = \frac{1}{n}b$. Then $a \in \0^n$ and $a^+ \leq \frac{1}{n}$. As a consequence, $s + \phi(a)\sim s + \phi(Pa)$. But then $ns + \phi(b)\sim ns + \phi(Pb)$. As $b \in \0^n$ was arbitrary with $b^+ \leq 1$, we obtain that $\phi$ preserves majorization on $\0^n$ by Lemma \ref{lem:s+phi->preserves_0<=1}.
\end{proof}

Assume that $\Phi$ preserves strong majorization for column stochastic matrices. The next theorem shows that any sum of $\Phi^{j_1}_k, \ldots, \Phi^{j_r}_k$ is a preserver of majorization on $\0^n$. This is the main result of this subsection.

\begin{theorem}\label{thm:Phi^j_k}
    Let a linear operator $\Phi$ on $M_{n, m}$ preserve strong majorization on $\Omega^{col}_{n, m}$. 

    Consider arbitrary $k \in \M$ and arbitrary distinct $j_1, \ldots, j_r \in \M$. 
    
    Then $\sum\limits_{q=1}^r\Phi_k^{j_q}$ preserves majorization on $\0^n$.
\end{theorem}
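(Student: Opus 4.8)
The plan is to verify the hypothesis of Corollary \ref{cor:s+phi->preserves_0<=1/n} for the operator $\psi := \sum_{q=1}^r \Phi^{j_q}_k$ with a suitably chosen shift $s$. That is, I would produce $s \in \RR^n$ for which
$$s + \psi(a) \sim s + \psi(Pa)$$
holds for every $a \in \0^n$ with $0 < a^+ \leq \tfrac{1}{n}$ and every $P \in P(n)$; the corollary then immediately yields that $\psi$ preserves majorization on $\0^n$. The central idea is to encode $a$ and $Pa$ into a pair of column stochastic matrices that agree up to a row permutation, so that the preserver hypothesis on $\Phi$ forces their images to agree up to a row permutation as well, after which I read off the $k$-th column.

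Concretely, fix $a \in \0^n$ with $0 < a^+ \leq \tfrac{1}{n}$ and $P \in P(n)$. I would define $B \in M_{n,m}$ by $B^{(j)} = \tfrac{1}{n}e + a$ for $j \in \{j_1, \ldots, j_r\}$ and $B^{(j)} = \tfrac{1}{n}e$ otherwise, and set $A = PB$. Since $Pe = e$, the matrix $A$ satisfies $A^{(j)} = \tfrac{1}{n}e + Pa$ for $j \in \{j_1, \ldots, j_r\}$ and $A^{(j)} = \tfrac{1}{n}e$ otherwise. Every column of $A$ and of $B$ sums to $1$ because $e^t a = 0$; and the constraint $a^+ \leq \tfrac{1}{n}$ is exactly what secures nonnegativity, since it forces every entry of $a$ (hence of $Pa$) to be at least $-\tfrac{1}{n}$, so that $\tfrac{1}{n}e + a \geq 0$ and $\tfrac{1}{n}e + Pa \geq 0$. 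Thus $A, B \in \Omega^{col}_{n,m}$ with $A = PB$, whence $A \sim^s B$ by Lemma \ref{lem:sim:strong}.

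Applying the hypothesis on $\Phi$ gives $\Phi(A) \sim^s \Phi(B)$, so $\Phi(A) = P'\Phi(B)$ for some $P' \in P(n)$ by Lemma \ref{lem:sim:strong}; in particular $\Phi(A)^{(k)} \sim \Phi(B)^{(k)}$. Using the decomposition of Definition \ref{def:decomposition} together with linearity, I would compute
$$\Phi(B)^{(k)} = \sum_{j=1}^m \Phi^j_k(B^{(j)}) = s + \psi(a), \qquad \Phi(A)^{(k)} = s + \psi(Pa),$$
where $s := \sum_{j=1}^m \Phi^j_k(\tfrac{1}{n}e)$ collects the contribution of the common part $\tfrac{1}{n}e$: the summands $\Phi^{j_q}_k(\tfrac{1}{n}e)$ for $q = 1, \ldots, r$ recombine with the remaining terms $\Phi^j_k(\tfrac{1}{n}e)$ for $j \notin \{j_1,\ldots,j_r\}$ to reconstitute the full sum over $j$. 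Hence $s + \psi(a) \sim s + \psi(Pa)$ for all admissible $a$ and $P$, and Corollary \ref{cor:s+phi->preserves_0<=1/n} completes the argument.

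As for the main obstacle: the only genuinely delicate point is arranging nonnegativity of the perturbed columns, and this is precisely why the weaker threshold $a^+ \leq \tfrac{1}{n}$ rather than $a^+ \leq 1$ is needed. With the natural base point $\tfrac{1}{n}e$, only perturbations with $a^+ \leq \tfrac{1}{n}$ keep every column inside $\1^n$, which is exactly what Corollary \ref{cor:s+phi->preserves_0<=1/n} was set up to accommodate. Everything else is bookkeeping through the decomposition and the symmetry of $\sim^s$.
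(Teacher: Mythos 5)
Your proposal is correct and follows essentially the same route as the paper: you construct the same column stochastic matrix $\tfrac{1}{n}J + a\sum_{q=1}^r e_{j_q}^t$ (merely naming it $B$ and its permuted copy $A=PB$), extract the $k$-th column of $\Phi(A)\sim^s\Phi(B)$ to get $s+\psi(a)\sim s+\psi(Pa)$ with the same shift $s=\sum_{j=1}^m\Phi^j_k(\tfrac{1}{n}e)$, and invoke Corollary \ref{cor:s+phi->preserves_0<=1/n}. The nonnegativity justification via $a^+\leq\tfrac{1}{n}$ matches the paper's as well.
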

\begin{proof}
    
    Consider arbitrary $a \in \0^n$ with $a^+ \leq \frac{1}{n}$. Observe that $|a_i| \leq \frac{1}{n}$ for any $i \in \N$. Let $A = \frac{1}{n}J + a\sum\limits_{q=1}^re_{j_q}^t$. Then $e^tA = e^t$ and $A \geq 0$ due to the choice of $a$. That is, $A \in \Omega^{col}_{n, m}$. 
    
    Consider arbitrary $P \in P(n)$. As $A \sim^s PA$, it must be that $\Phi(A)\sim^s \Phi(PA)$. In particular, $\Phi(A)^{(k)}\sim \Phi(PA)^{(k)}$. 

    Let $s = \sum\limits_{q=1}^{m} \Phi^q_k(\frac{1}{n}e)$. Observe that $$\Phi(A)^{(k)} = \sum\limits_{q=1}^{m} \Phi^q_k(A^{(q)}) = \sum\limits_{q=1}^{m} \Phi^q_k(\frac{1}{n}e) + \sum\limits_{q=1}^r\Phi_k^{j_q}(a) = s + (\sum\limits_{q=1}^r\Phi_k^{j_q})(a).$$

    Similarly, $\Phi(PA)^{(k)} = \sum\limits_{q=1}^{m} \Phi^q_k(PA^{(q)}) = \sum\limits_{q=1}^{m} \Phi^q_k(\frac{1}{n}Pe) + \sum\limits_{q=1}^r\Phi_k^{j_q}(Pa) = s + (\sum\limits_{q=1}^r\Phi_k^{j_q})(Pa)$.

    Therefore, we have shown that $s + (\sum\limits_{q=1}^r\Phi_k^{j_q})(a) \sim s + (\sum\limits_{q=1}^r\Phi_k^{j_q})(Pa)$ for any $a \in \0^n$ with $a^+ \leq \frac{1}{n}$ and any $P \in P(n)$.

    Finally, the linear operator $(\sum\limits_{q=1}^r\Phi_k^{j_q})$ preserves vector majorization on $\0^n$ by Corollary \ref{cor:s+phi->preserves_0<=1/n}.
\end{proof}

As a particular case of Theorem \ref{thm:Phi^j_k} for $r = 1$ we obtain the following

\begin{corollary}\label{cor:Phi^j_k}
    Let a linear operator $\Phi$ on $M_{n, m}$ preserve strong majorization on $\Omega^{col}_{n, m}$. Then $\Phi_k^j$ preserves vector majorization on $\0^n$ for any $k, j \in \M$.
\end{corollary}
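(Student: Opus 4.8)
The plan is to obtain this statement as the special case $r = 1$ of Theorem \ref{thm:Phi^j_k}, which has already been established. Fixing arbitrary $k, j \in \M$, I would apply Theorem \ref{thm:Phi^j_k} with the single index $j_1 = j$. Then $r = 1$ and the sum $\sum_{q=1}^{r} \Phi_k^{j_q}$ collapses to the single operator $\Phi_k^{j_1} = \Phi_k^{j}$. The theorem asserts that this sum preserves majorization on $\0^n$, which is exactly the claim for $\Phi_k^j$. Since "preserves vector majorization on $\0^n$" and "preserves majorization on $\0^n$" are used synonymously throughout Section \ref{sec:0} (both refer to the defining property of Definition preceding Lemma \ref{lem:0:preserves->0}), the two phrasings match and nothing further is required.

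There is essentially no obstacle here: the corollary is a pure specialization, and all the substantive work has already been done inside the proof of Theorem \ref{thm:Phi^j_k}. In particular, the crucial construction of the column stochastic matrix $A = \frac{1}{n}J + a\sum_{q=1}^r e_{j_q}^t$ (which for $r = 1$ reduces to $A = \frac{1}{n}J + a\, e_{j}^t$), the comparison $A \sim^s PA$ forcing $\Phi(A)^{(k)} \sim \Phi(PA)^{(k)}$, and the final appeal to Corollary \ref{cor:s+phi->preserves_0<=1/n} all carry over verbatim. Thus the only genuine content of the corollary lies upstream, and the proof itself is a one-line invocation of the more general theorem with $r = 1$.

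If one preferred a self-contained argument (rather than citing the general $r$), the plan would be to rerun the construction of Theorem \ref{thm:Phi^j_k} in the single-column case: take $a \in \0^n$ with $a^+ \le \frac{1}{n}$, form $A = \frac{1}{n}J + a\, e_j^t \in \Omega^{col}_{n,m}$, use that $\Phi$ preserves strong majorization on $\Omega^{col}_{n,m}$ together with $A \sim^s PA$ to get $\Phi(A)^{(k)} \sim \Phi(PA)^{(k)}$, identify $\Phi(A)^{(k)} = s + \Phi_k^j(a)$ with $s = \sum_{q=1}^m \Phi_k^q(\frac{1}{n}e)$, and conclude via Corollary \ref{cor:s+phi->preserves_0<=1/n}. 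I expect no difficulty with either route; the general theorem already subsumes this case cleanly.
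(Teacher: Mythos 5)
Your proposal is correct and matches the paper exactly: the corollary is stated there as the particular case $r=1$ of Theorem \ref{thm:Phi^j_k}, with no further argument needed. Your observation that ``preserves majorization on $\0^n$'' and ``preserves vector majorization on $\0^n$'' are used interchangeably is also consistent with the paper's usage.
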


\subsection{Permutation matrices of $\Phi^j_k$ coincide}

We start by showing that for a fixed $k$ we can take the same permutation matrix in every $\Phi^j_k$. The key idea is to use the fact that due to Theorem \ref{thm:Phi^j_k} and Corollary \ref{cor:Phi^j_k}, the operators $\Phi^i_k$, $\Phi^j_k$ and $\Phi^i_k + \Phi^j_k$ have a similar form. The next lemma shows that this is almost enough to give the required result.

\begin{lemma}\label{lem:sum_of_2_preservers}
    Let $X, Y \in M_n$. Assume that $X = ve^t + \lambda_1 P_1$, $Y = we^t + \lambda_2 P_2$ and $X + Y = se^t + \lambda P$ for some $v, w, s \in \RR^n$, $\lambda_1, \lambda_2, \lambda \in \RR$, $\lambda_1 \neq 0$ and $P_1, P_2, P \in P(n)$. Then one of the following holds.

    \begin{enumerate}
        \item There exist $w' \in \RR^n$ and $\lambda'_2 \in \RR$ such that $Y = w'e^t + \lambda'_2 P_1$;
        \item $n = 3$, $\lambda_1 = \lambda_2$ and $P_1 + P_2 \leq J$ (entry-wise).
    \end{enumerate}
\end{lemma}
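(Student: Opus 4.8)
The plan is to collapse the three hypotheses into a single identity among permutation matrices and then read it off column-by-column and row-by-row. Subtracting the expressions for $X$, $Y$ and $X+Y$ gives
$$\lambda_1 P_1 + \lambda_2 P_2 - \lambda P = (s - v - w)e^t,$$
so, writing $u = s - v - w \in \RR^n$, the matrix $\lambda_1 P_1 + \lambda_2 P_2 - \lambda P$ has rank at most one, with every column equal to $u$. Throughout I write $\pi_1, \pi_2, \pi$ for the permutations of $P_1, P_2, P$, so that $P_1^{(k)} = e_{\pi_1(k)}$, and similarly for the others. First I would dispose of the degenerate case $\lambda_2 = 0$, where $Y = we^t = we^t + 0\cdot P_1$ already matches Item 1; so I assume $\lambda_2 \neq 0$ and split the argument according to whether $u = 0$.

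If $u = 0$, then $\lambda_1 P_1 + \lambda_2 P_2 = \lambda P$, and the $k$-th column reads $\lambda_1 e_{\pi_1(k)} + \lambda_2 e_{\pi_2(k)} = \lambda e_{\pi(k)}$. The right-hand side has at most one nonzero entry, whereas if $\pi_1(k) \neq \pi_2(k)$ the left-hand side has two nonzero entries $\lambda_1, \lambda_2$, both nonzero. Hence $\pi_1(k) = \pi_2(k)$ for every $k$, i.e. $P_1 = P_2$, and then $Y = we^t + \lambda_2 P_1$ is of the form in Item 1.

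The substantive case is $u \neq 0$, and this is where I expect the main difficulty. The key preliminary is a counting argument on rows: row $i$ of $\lambda_1 P_1 + \lambda_2 P_2 - \lambda P$ is nonzero in at most the three columns indexed by $\pi_1^{-1}(i), \pi_2^{-1}(i), \pi^{-1}(i)$, yet it must equal the constant $u_i$ in all $n$ columns. If $n \geq 4$, some column avoids all three special positions, forcing $u_i = 0$ for every $i$, that is $u = 0$, a contradiction; hence $u \neq 0$ forces $n \leq 3$. For $n \leq 2$ one verifies Item 1 directly: for $n = 2$ either $P_2 = P_1$, or $P_2 = J - P_1 = ee^t - P_1$, and in the latter case $Y = (w + \lambda_2 e)e^t - \lambda_2 P_1$ again has the form in Item 1 (the case $n = 1$ being trivial).

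It remains to treat $n = 3$ with $u \neq 0$, which should yield Item 2. Summing all three columns of the identity gives $3u = (\lambda_1 + \lambda_2 - \lambda)e$, so $u = \gamma e$ with $\gamma \neq 0$. Then for each $k$ the vector $\lambda_1 e_{\pi_1(k)} + \lambda_2 e_{\pi_2(k)} - \lambda e_{\pi(k)}$ equals $(\gamma, \gamma, \gamma)^t$, which has no zero entry; since only rows $\pi_1(k), \pi_2(k), \pi(k)$ can be nonzero, all three indices must occur, hence they are pairwise distinct. Reading off the three entries gives $\lambda_1 = \lambda_2 = -\lambda = \gamma$, so in particular $\lambda_1 = \lambda_2$, while $\pi_1(k) \neq \pi_2(k)$ for every $k$ says that $P_1$ and $P_2$ have disjoint supports, i.e. $P_1 + P_2 \leq J$. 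This is precisely Item 2. The main obstacle is organizing this $n=3$ analysis cleanly; the rank-one reduction together with the row-counting bound $n \leq 3$ are the two levers that make the whole dichotomy tractable.
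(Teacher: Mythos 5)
Your proof is correct and follows essentially the same route as the paper: reduce the three hypotheses to the rank-one identity $\lambda_1 P_1+\lambda_2 P_2-\lambda P=(s-v-w)e^t$, use the fact that each row of a combination of three permutation matrices has at most three nonzero entries to force $n\le 3$ when $s-v-w\ne 0$, and then analyze the $n=3$ case to extract $\lambda_1=\lambda_2=-\lambda$ and the disjointness of the supports of $P_1$ and $P_2$. Your column-summation step showing $u=\gamma e$ is a slightly cleaner way to organize the $n=3$ analysis than the paper's row-by-row argument, but the substance is identical.
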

\begin{proof}
    If $\lambda_2 = 0$, then $Y = we^t + \lambda_2 P_1$. If $P_1 = P_2$, then there is nothing left to prove. Therefore, assume that $\lambda_2 \neq 0$ and $P_1 \neq P_2$. 
    
    If $n = 2$, then $P_2 = J - P_1$. It follows that $Y=we^t + \lambda_2 (J - P_1) = (w + \lambda_2 e)e^t- \lambda_2 P_1$.

    Now assume that $n \geq 3$. Consider the equation $ve^t + \lambda_1 P_1 + we^t + \lambda_2 P_2 = se^t + \lambda P$. If follows that $(v + w - s)e^t = \lambda P - \lambda_1 P_1 - \lambda_2 P_2$.
    
    In particular, the columns of $\lambda P - \lambda_1 P_1 - \lambda_2 P_2$ are equal. There are two possibilities:
    
    \textbf{Case 1.} $v + w - s = 0$. In this case, $\lambda P = \lambda_1 P_1 + \lambda_2 P_2$. As $\lambda_1, \lambda_2 \neq 0$, it follows that $P_1 = P_2$.

    \textbf{Case 2.} $v + w - s \neq 0$. However, there are at most three nonzero entries in every row of $\lambda P - \lambda_1 P_1 - \lambda_2 P_2$. Thus $n \leq 3$, which means that $n = 3$.

    As $v + w - s \neq 0$, there must be a row without zero entries in $\lambda P - \lambda_1 P_1 - \lambda_2 P_2$. On the other hand, the columns of $\lambda P - \lambda_1 P_1 - \lambda_2 P_2$ are equal. It means that $\lambda = -\lambda_1 = -\lambda_2$.

    Therefore $(v + w - s)e^t = \lambda(P + P_1 + P_2)$. Then $P_1 + P_2 \leq J$. Indeed, otherwise $(P_1 + P_2)_{ij} = 2$ for some $1\leq i, j \leq 3$. This would mean that not all entries of $(P+P_1 + P_2)_{(i)}$ are equal.
\end{proof}

The next lemma shows that for a fixed $k$ we can take the same permutation matrix in every $\Phi^j_k$. 

\begin{lemma}\label{lem:P_in_Phi_k}
     Let a linear operator $\Phi$ on $M_{n, m}$ preserve strong majorization on $\Omega^{col}_{n, m}$. 

    Let $k \in \M$. Then there exist $P \in P(n)$ such that for any $j \in \M$ $$[\Phi^j_k] = v^j e^t + \lambda_j P, \text{ for some } v^j \in \RR^n \text{ and } \lambda_j \in \RR.$$
\end{lemma}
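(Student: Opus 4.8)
The plan is to first pin down the shape of each individual operator $\Phi^j_k$ and then force a common permutation. By Corollary~\ref{cor:Phi^j_k} every $\Phi^j_k$ preserves majorization on $\0^n$, so Theorem~\ref{thm:0} gives $[\Phi^j_k]=v^je^t+\lambda_jP_j$ for some $v^j\in\RR^n$, $\lambda_j\in\RR$ and $P_j\in P(n)$. If all $\lambda_j=0$ any $P$ works, so I may fix an index $j_0$ with $\lambda_{j_0}\neq0$ and set $P:=P_{j_0}$. Indices $j$ with $\lambda_j=0$ are immediately of the desired form $v^je^t+0\cdot P$, so the only work is for $j$ with $\lambda_j\neq0$. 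For such $j$, Theorem~\ref{thm:Phi^j_k} (with $r=2$) says $\Phi^{j_0}_k+\Phi^j_k$ preserves majorization on $\0^n$, whence by Theorem~\ref{thm:0} its matrix is $se^t+\lambda P$. Thus Lemma~\ref{lem:sum_of_2_preservers} applies to $X=[\Phi^{j_0}_k]$ (note $\lambda_{j_0}\neq0$) and $Y=[\Phi^j_k]$, and yields one of two outcomes: either $[\Phi^j_k]=w'e^t+\lambda'_jP_{j_0}=w'e^t+\lambda'_jP$, which is exactly what we want, or the exceptional case $n=3$, $\lambda_j=\lambda_{j_0}=:\lambda$, $P_{j_0}+P_j\le J$ with $P_{j_0}\neq P_j$.

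The whole difficulty is to rule out this exceptional case, and here I must use the full hypothesis (preservation of strong majorization on all of $\Omega^{col}_{n,m}$), not merely the zero-sum consequences extracted so far. The tool is a family of column-stochastic test matrices. For small $\epsilon>0$ and $d_1,d_2\in\0^n$ put $B=\tfrac1nJ+\epsilon d_1e_{j_0}^t+\epsilon d_2e_j^t$ and $A=DB$ for a doubly stochastic $D$; both lie in $\Omega^{col}_{n,m}$ and $A\preceq^sB$, so $\Phi(A)^{(k)}\preceq\Phi(B)^{(k)}$. Since $d_1,d_2\in\0^n$, the $v^{\cdot}e^t$ parts drop out and, writing $s=\sum_{q}\Phi^q_k(\tfrac1ne)$, this reads $s+\epsilon\lambda\big(P_{j_0}Dd_1+P_jDd_2\big)\preceq s+\epsilon\lambda\big(P_{j_0}d_1+P_jd_2\big)$.

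I would exploit this in two steps. First, taking $d_2=0$ and $D=\tfrac1nJ$ (so $Dd_1=0$) forces $s\preceq s+w$ for every small $w\in\0^n$, since $\lambda P_{j_0}d_1$ ranges over all of $\0^n$; because the constant vector is the unique minimum of the majorization order among fixed-sum vectors, this gives $s=ce$ for some $c\in\RR$. Consequently the summand $s$ can be discarded (adding a multiple of $e$ preserves majorization) and, using invariance of majorization under the nonzero scaling $\epsilon\lambda$, the displayed relation reduces to the homogeneous condition $P_{j_0}Dd_1+P_jDd_2\preceq P_{j_0}d_1+P_jd_2$ for all doubly stochastic $D$ and all $d_1,d_2\in\0^3$. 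Second, I choose $d_2=-P_j^{-1}P_{j_0}d_1$, which makes the right-hand side vanish; since the only vector majorized by $0$ is $0$, this forces $P_{j_0}Dd_1+P_jDd_2=0$, i.e. $(P_{j_0}DP_{j_0}^{-1}-P_jDP_j^{-1})w=0$ for $w=P_{j_0}d_1$, for every doubly stochastic $D$ and every $w\in\0^3$. Because $P_{j_0}\neq P_j$, a single explicit non-permutation doubly stochastic $D$ together with a suitable $w\in\0^3$ violates this identity, the desired contradiction. Hence the exceptional case never occurs and $[\Phi^j_k]=v^je^t+\lambda_jP$ for all $j$.

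The main obstacle is precisely the exceptional $n=3$ branch of Lemma~\ref{lem:sum_of_2_preservers}: at the level of zero-sum vectors the three cyclic permutations are genuinely indistinguishable, so the common permutation can only be recovered by probing the \emph{directional} behaviour of $\Phi$ on honestly (non-symmetrically) majorized column-stochastic pairs. Reducing to $s=ce$ and then to the homogeneous conjugation identity is the crux; everything else is bookkeeping.
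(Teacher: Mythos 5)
Your proposal is correct, and its first three‑quarters (decompose via Corollary~\ref{cor:Phi^j_k} and Theorem~\ref{thm:0}, fix $j_0$ with $\lambda_{j_0}\neq 0$, apply Theorem~\ref{thm:Phi^j_k} with $r=2$ and then Lemma~\ref{lem:sum_of_2_preservers}) coincides exactly with the paper's proof; the difference lies entirely in how you kill the exceptional $n=3$ branch. The paper tests $\Phi$ on matrices whose $i$-th and $j$-th columns are standard basis vectors $e_{l_1},e_{l_2}$ (the rest being $\tfrac1n e$), compares with $P_{(l_2l_3)}A$, and derives a contradiction from a single maximum-entry comparison in the resulting equivalence $x+\lambda_i(P_ie_{l_1}+P_je_{l_2})\sim x+\lambda_i(P_ie_{l_1}+P_je_{l_3})$ — it never needs to know anything about the offset $x$. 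You instead perturb $\tfrac1nJ$ by $\epsilon d_1e_{j_0}^t+\epsilon d_2e_j^t$, first force $s=ce$ by comparing against the fully averaged matrix $\tfrac1nJ$ (your one-line justification here is terse but repairable: if $s\neq ce$, take $w=-\delta(s-\bar se)$, so $s+w\preceq s\preceq s+w$ yet $\max(s+w)<\max(s)$), then homogenize and use the genuinely order-theoretic step ``$x\preceq 0\Rightarrow x=0$'' with $d_2=-P_j^{-1}P_{j_0}d_1$ to reduce everything to the statement that some doubly stochastic $D$ fails to commute with the $3$-cycle $Q=P_{j_0}^{-1}P_j$ (which holds, e.g.\ for $D=\tfrac12(I+P_{(12)})$, though you should exhibit it). Your route is slightly longer and uses the one-sided majorization $A=DB\preceq^sB$ where the paper only ever invokes the symmetric relation $A\sim^sQA$; in exchange, the commutator formulation $(P_{j_0}DP_{j_0}^{-1}-P_jDP_j^{-1})w=0$ makes it structurally transparent \emph{why} the two permutations must agree, whereas the paper's max-comparison is shorter but more ad hoc. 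Both arguments are sound; note also that permutation matrices $D$ already suffice for your final step, which would bring your argument back in line with the paper's use of $\sim^s$ only.
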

\begin{proof}
    By Corollary \ref{cor:Phi^j_k} every $\Phi^j_k$ preserves vector majorization on $\0^n$. Then for any $j \in \M$ there exist $v^j \in \RR^n$, $\lambda_j \in \RR$ and $P_j \in P(n)$ such that $[\Phi^j_k] = v^j e^t + \lambda_j P_j$. We only need to prove that we can take the same permutation matrix $P_j$ for any $j \in \M$.

    If $\lambda_1 = \ldots = \lambda_m = 0$, then the result is trivial. Assume that $\lambda_i \neq 0$ for some $i \in \M$.

    Consider arbitrary $j \in \M$, $j \neq i$. According to Theorem \ref{thm:Phi^j_k}, the linear operator $\Phi^i_k + \Phi^j_k$ preserves vector majorization on $\0^n$. Then, by Theorem \ref{thm:0}, there exist $s \in \RR^n$, $\lambda \in \RR$ and $P \in P(n)$ such that $[\Phi^i_k + \Phi^j_k] = [\Phi^i_k] + [\Phi^j_k] = se^t + \lambda P$.

    Then, according to Lemma \ref{lem:sum_of_2_preservers}, there are two possibilities. The first is that $[\Phi^j_k] = we^t + \lambda'_j P_i$ for some $w \in \RR^n$ and $\lambda'_j \in \RR$, which is what we need. The second is that $n = 3$, $\lambda_i = \lambda_j$ and $P_i + P_j \leq J$. In order to conclude the proof, we show that the latter is impossible.

    Denote $x = \sum\limits_{q=1}^{m} v^q + (\frac{1}{n}\sum\limits_{r \neq i, j} \lambda_r)e$. If $m = 2$, then the second summand is just $0$.
    
    Consider an arbitrary permutation $l_1, l_2, l_3$ of $1, 2, 3$.
    Define $A \in M_{n, m}$ by $$\begin{cases}
        A^{(i)} = e_{l_1};\\
        A^{(j)} = e_{l_2};\\
        \text{if } m > 2 \text{: } A^{(r)} = \frac{1}{n}e, \text{ for } r \neq i, j.
    \end{cases}$$ Observe that $A \in \Omega^{col}_{n, m}$. 

    Then $$\Phi(A)^{(k)} = \sum\limits_{q = 1}^{m} \Phi^q_k(A^{(q)})= \frac{1}{n}\sum\limits_{r \neq i, j}(v^r e^t + \lambda_r P_r)e + (v^i e^t + \lambda_i P_i)e_{l_1} + (v^j e^t + \lambda_i P_j)e_{l_2}=$$ $$ =\sum\limits_{q=1}^{m}v^q + \frac{1}{n}\sum\limits_{r \neq i, j} \lambda_r P_r e + \lambda_i(P_ie_{l_1} + P_je_{l_2}) = x +\lambda_i(P_ie_{l_1} + P_je_{l_2}).$$

    Similarly, $\Phi(P_{(l_2 l_3)}A)^{(k)} = x + \lambda_i(P_ie_{l_1} + P_j e_{l_3})$. Recall that $l_1, l_2, l_3$ were an arbitrary permutation of $1, 2, 3$. Then as $A \sim^s P_{(l_2 l_3)}A$, we conclude that the equivalence \begin{equation}\label{eq:x+lambda...} x +\lambda_i(P_ie_{l_1} + P_je_{l_2})\sim x +\lambda_i(P_ie_{l_1} + P_je_{l_3})\end{equation} holds for any distinct $l_1, l_2, l_3 \in \{1, 2, 3\}$.

    Let $x' = \frac{1}{\lambda_i}P_i^{-1}x$. Then after multiplying by $\frac{1}{\lambda_i}P_i^{-1}$ Equivalence \eqref{eq:x+lambda...} becomes 

    \begin{equation}\label{eq:x'+...} x' +e_{l_1} + P_i^{-1}P_je_{l_2}\sim x' +e_{l_1} + P_i^{-1}P_je_{l_3}.\end{equation}

    Choose $l_1$ such that $\max(x') = x'_{l_1}$. Observe that $P_i^{-1}P_je_q\neq e_q$ for any $q \in \{1, 2, 3\}$. Indeed, otherwise $P_je_q = P_ie_q$, which contradicts $P_i + P_j \leq J$. Thus $e_{l_1}\neq P_i^{-1}P_je_{l_1}$. Therefore we can choose $l_2$ so that $P_i^{-1}P_je_{l_2} = e_{l_1}$. In addition, it means that $P_i^{-1}P_je_{l_3} = e_{l_2}$.

    Therefore Equation \eqref{eq:x'+...} becomes $$x' + 2e_{l_1}\sim x' + e_{l_1} + e_{l_2}.$$

    But $\max(x' + 2e_{l_1}) = \max(x') + 2$, while $\max(x' + e_{l_1} + e_{l_2}) = \max(x') + 1$, a contradiction.

    Finally, we have shown that only the first option provided by Lemma \ref{lem:sum_of_2_preservers} is possible, which is what was required.
    
\end{proof}

As it was shown in Lemma \ref{lem:1:sum_preserves}, $\sum\limits_{j=1}^m \Phi^j_k$ preserves vector majorization. According to Theorem \ref{Ando}, there are two types of such operators. In Lemma \ref{lem:Phi-P=>sum...} we show that if some $\Phi^j_k$ has a nontrivial permutation component, we can always assume the second type of Ando's operators with the same permutation matrix. First, we make the following observation.

\begin{lemma}\label{lem:s+e_g-e_h}
    Let $s \in \RR^n$. If $s + e_g - e_h\sim s + P(e_g - e_h)$ holds for any $P \in P(n)$ and any $g, h \in \N$, then $s = \lambda e$ for some $\lambda \in \RR$. 
\end{lemma}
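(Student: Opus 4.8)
The plan is to prove the contrapositive: assuming that $s$ is not a constant vector, I will exhibit a single permutation $P$ and indices $g,h$ for which $s+e_g-e_h \not\sim s+P(e_g-e_h)$. The only feature of the equivalence $\sim$ that I will use is that, by Lemma \ref{lem:vector:symmetry}, $a\sim b$ forces $a$ and $b$ to have the same multiset of entries, and in particular $\max(a)=\max(b)$. Thus it suffices to produce, from the hypothesis, two vectors that are required to be $\sim$-equivalent but have different maxima.

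The engine of the argument is an elementary computation of $\max(s+e_g-e_h)$ for $g\neq h$. Write $m^\ast=\max(s)$. The entries of $s+e_g-e_h$ are $s_g+1$ at position $g$, $s_h-1$ at position $h$, and $s_i$ at the remaining positions, so $\max(s+e_g-e_h)=\max\bigl(s_g+1,\ s_h-1,\ \max_{i\neq g,h}s_i\bigr)$. If $g$ is a maximizer, i.e. $s_g=m^\ast$, then the term $s_g+1=m^\ast+1$ dominates (the other two terms are at most $m^\ast$), giving $\max(s+e_g-e_h)=m^\ast+1$. If instead $g$ is a non-maximizer, i.e. $s_g<m^\ast$, then all three terms lie strictly below $m^\ast+1$: indeed $s_g+1<m^\ast+1$, while $s_h-1\le m^\ast-1$ and $\max_{i\neq g,h}s_i\le m^\ast$, whence $\max(s+e_g-e_h)<m^\ast+1$.

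With this in hand the contradiction is immediate. The case $n=1$ is trivial, so assume $n\ge 2$. Since $s$ is non-constant, there is a maximizer $g$ with $s_g=m^\ast$ and a non-maximizer $g_0$ with $s_{g_0}<m^\ast$. Fix any $h\neq g$; by the computation, $\max(s+e_g-e_h)=m^\ast+1$. Because $P$ ranges over all of $P(n)$, I may choose $P$ with $Pe_g=e_{g_0}$ and $Pe_h=e_{h'}$ for some $h'\neq g_0$, so that $P(e_g-e_h)=e_{g_0}-e_{h'}$; the computation then yields $\max\bigl(s+P(e_g-e_h)\bigr)=\max(s+e_{g_0}-e_{h'})<m^\ast+1$. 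The two sides thus have distinct maxima and cannot satisfy $s+e_g-e_h\sim s+P(e_g-e_h)$, contradicting the hypothesis. Hence $s=\lambda e$ with $\lambda=m^\ast$.

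I do not anticipate any genuine obstacle; the single point needing a word of justification is that a permutation matrix can send the fixed distinct pair $(g,h)$ to the prescribed distinct pair $(g_0,h')$, which is clear since permutations act transitively on ordered pairs of distinct indices. Everything else reduces to the one max-computation above.
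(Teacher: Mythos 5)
Your proof is correct and follows essentially the same route as the paper: both pick $g$ with $s_g=\max(s)$ and a permutation sending $g$ to a non-maximal position, then compare the maxima of the two sides (the paper sends $g$ to a minimizer of $s$, you to an arbitrary non-maximizer, which is an immaterial difference). Your write-up just makes the max-computation slightly more explicit.
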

\begin{proof}
    Assume that there is no $\lambda \in \RR$ such that $s = \lambda e$. Then $\max(s) > \min(s)$.

    Let us choose $g$ such that $s_g = \max(s)$. Let us choose $P \in P(n)$ such that $Pe_g = e_w$, where $s_w = \min(s)$.

    If follows that $\max(s + e_g - e_h) = \max(s) + 1$, while $\max(s + P(e_g - e_h)) < \max(s) + 1$, a contradiction.
\end{proof}

\begin{lemma}\label{lem:Phi-P=>sum...}
    Let a linear operator $\Phi$ on $M_{n, m}$ preserve strong majorization on $\Omega^{col}_{n, m}$.

    Assume that for some $j_1, k \in \M$ $$[\Phi^{j_1}_k] = ve^t + \gamma P \text{ for some } v \in \RR^n, \gamma \in \RR \text{ and } P \in P(n).$$

    If $\gamma \neq 0$, then $$[\sum\limits_{j=1}^{m}\Phi^j_k] = \alpha J + \beta P \text{ for some } \alpha, \beta \in \RR.$$
\end{lemma}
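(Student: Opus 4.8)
The plan is to reduce everything to the single assertion that the vector $w=\sum_{j=1}^m v^j$ is a scalar multiple of $e$. First I would apply Lemma \ref{lem:P_in_Phi_k}: since $\gamma=\lambda_{j_1}\neq 0$, the common permutation produced there may be taken to be exactly the matrix $P$ of the hypothesis (if some other permutation $P'$ gave $v'e^t+\gamma P'=ve^t+\gamma P$ with $\gamma\neq 0$, then $\gamma(P-P')$ would have rank $\leq 1$ with equal columns, which forces $P=P'$). Thus $[\Phi^j_k]=v^je^t+\lambda_j P$ for every $j\in\M$ with the same $P$, and writing $w=\sum_j v^j$, $\mu=\sum_j\lambda_j$ we get $[\sum_{j=1}^m\Phi^j_k]=we^t+\mu P$. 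Consequently it suffices to show $w=\alpha e$ for some $\alpha\in\RR$, since then $\sum_{j=1}^m\Phi^j_k=\alpha ee^t+\mu P=\alpha J+\mu P$, which is the desired form with $\beta=\mu$.

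The main tool is the identity that for any column stochastic $A$, using $e^tA^{(j)}=1$, one has $\Phi(A)^{(k)}=\sum_j\Phi^j_k(A^{(j)})=\sum_j\bigl(v^j(e^tA^{(j)})+\lambda_j PA^{(j)}\bigr)=w+P\sum_j\lambda_j A^{(j)}$. I would test this on a well-chosen family. Fix distinct $g,h\in\N$ and a small $\epsilon>0$, and let $A$ be the column stochastic matrix whose column $j_1$ equals $\tfrac1n e+\epsilon(e_g-e_h)$ while all other columns equal $\tfrac1n e$; taking $\epsilon$ small keeps $A$ nonnegative, so $A\in\Omega^{col}_{n,m}$. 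A direct computation gives $\Phi(A)^{(k)}=s+\gamma\epsilon\,P(e_g-e_h)$ with $s=w+\tfrac\mu n e$. For any $Q\in P(n)$ the matrix $QA$ again lies in $\Omega^{col}_{n,m}$ and $A\sim^s QA$ by Lemma \ref{lem:sim:strong}; since $\Phi$ preserves strong majorization on $\Omega^{col}_{n,m}$ we obtain $\Phi(A)\sim^s\Phi(QA)$, hence $\Phi(A)^{(k)}\sim\Phi(QA)^{(k)}$. As $\Phi(QA)^{(k)}=s+\gamma\epsilon\,PQ(e_g-e_h)$, this yields $s+\gamma\epsilon\,P(e_g-e_h)\sim s+\gamma\epsilon\,PQ(e_g-e_h)$ for all $Q\in P(n)$ and all distinct $g,h$.

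It remains to massage this into the exact hypothesis of Lemma \ref{lem:s+e_g-e_h}. Substituting $Q'=PQP^{-1}$ (which ranges over all of $P(n)$ as $Q$ does) and writing $P(e_g-e_h)=e_{g'}-e_{h'}$, the relation becomes $s+\gamma\epsilon(e_{g'}-e_{h'})\sim s+\gamma\epsilon\,Q'(e_{g'}-e_{h'})$ for every $Q'\in P(n)$ and all indices. Dividing this equivalence by $\gamma\epsilon\neq 0$ and applying Lemma \ref{lem:s+e_g-e_h} to the vector $s/(\gamma\epsilon)$ forces $s/(\gamma\epsilon)=\lambda e$, so $s$, and therefore $w=s-\tfrac\mu n e$, is a multiple of $e$; this finishes the proof. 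The step I expect to require the most care is the reduction in this last paragraph: choosing a genuinely zero-sum, sufficiently small single-column perturbation so that the test matrices remain column stochastic while the induced equivalence matches the precise zero-sum form demanded by Lemma \ref{lem:s+e_g-e_h}, together with the bookkeeping (via $Q'=PQP^{-1}$ and $P=P_{j_1}$) that aligns the permutation with the one in the hypothesis.
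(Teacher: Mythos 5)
Your proof is correct, and it reaches the conclusion by a somewhat cleaner route than the paper. The paper first invokes Lemma \ref{lem:1:sum_preserves} to see that $\sum_{j=1}^m\Phi^j_k$ preserves vector majorization, and then splits according to the two Ando forms: in the rank-one case it runs essentially your test-matrix argument (the matrix $\frac1nJ+\frac1n(e_g-e_h)e_{j_1}^t$ together with Lemma \ref{lem:s+e_g-e_h}) to force the vector $s$ to be a multiple of $e$, and in the case $\alpha J+\beta Q$ it uses Lemma \ref{lem:P_in_Phi_k} plus a column-matching argument to force $Q=P$. You instead start from Lemma \ref{lem:P_in_Phi_k}, sum to get $we^t+\mu P$ directly, and then apply the same test matrix and Lemma \ref{lem:s+e_g-e_h} to show $w$ is a multiple of $e$; this collapses the paper's two cases into one and bypasses Ando's theorem and Lemma \ref{lem:1:sum_preserves} entirely, at the price of leaning on Lemma \ref{lem:P_in_Phi_k} from the outset (which is legitimate: it is proved earlier and independently). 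The one point to tighten is your parenthetical justification that the permutation produced by Lemma \ref{lem:P_in_Phi_k} can be taken to be the $P$ of the hypothesis: the two representations need not share the coefficient $\gamma$ a priori, so the correct argument is that $(v-v^{j_1})e^t=\lambda_{j_1}P'-\gamma P$ has all columns equal, which for $n\ge 3$ forces $v=v^{j_1}$ and $\lambda_{j_1}P'=\gamma P$, hence $P'=P$ and $\lambda_{j_1}=\gamma$; for $n=2$ the representation is not unique, but one simply rewrites $v^je^t+\lambda_jP'=(v^j+\lambda_je)e^t-\lambda_jP$ using $P'=J-P$ (the paper makes the same kind of adjustment). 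With that repair the argument is complete.
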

\begin{proof}
    By Lemma \ref{lem:1:sum_preserves} $\sum\limits_{j=1}^m \Phi^j_k$ preserves vector majorization. According to Theorem \ref{Ando} there are two possibilities
    
    \textbf{Case 1.} $[\sum\limits_{j=1}^{m}\Phi^j_k] = se^t$ for some $s \in \RR^n$. We will show that $s = \lambda e$ for some $\lambda \in \RR$.

    Consider $A = \frac{1}{n}J + \frac{1}{n}(e_g - e_h)e_{j_1}^t$, where $g, h \in \N$ are arbitrary distinct indices. Observe that $A\in \Omega_{n, m}^{col}$.

    Then $\Phi(A)^{k} = \sum\limits_{j=1}^{m}\Phi^j_k (\frac{1}{n}e) + \Phi^{j_1}_k(\frac{1}{n}(e_g - e_h)) = s + \frac{\gamma}{n}P(e_g - e_h)$.

    Similarly, for any $R \in P(n)$ we have $\Phi(RA)^{(k)} = s + \frac{\gamma}{n}PR(e_g - e_h)$.

    But as $\Phi$ preserves strong majorization on $\Omega_{n, m}^{col}$, we obtain that $$s + \frac{\gamma}{n}P(e_g - e_h) \sim s + \frac{\gamma}{n}PR(e_g - e_h).$$

    Multiplying by $\frac{n}{\gamma}P^{-1}$, we get $$\frac{n}{\gamma}P^{-1}s + (e_g - e_h)\sim \frac{n}{\gamma}P^{-1}s + R(e_g - e_h).$$

    By Lemma \ref{lem:s+e_g-e_h}, $\frac{n}{\gamma}P^{-1}s = \lambda e$ for some $\lambda \in \RR$. It follows that $s = \frac{\gamma \lambda}{n}e$ and $$[\sum\limits_{j=1}^{m}\Phi^j_k] = \frac{\gamma \lambda}{n} J + 0 P.$$

    \textbf{Case 2.} $[\sum\limits_{j=1}^{m}\Phi^j_k] = \alpha J + \beta Q$ for some $\alpha, \beta \in \RR$ and $Q \in P(n)$. 
    
    If $\beta = 0$, or $Q = P$, then there is nothing to prove. Assume that $\beta \neq 0$ and $Q \neq P$. If $n = 2$, then $Q = J - P$ and $[\sum\limits_{j=1}^{m}\Phi^j_k] = (\alpha + \beta) J - \beta P$.

    Assume that $n \geq 3$. Then by Lemma \ref{lem:P_in_Phi_k} there exist $P' \in P(n)$, $v^1, \ldots, v^m \in \RR^n$ and $\lambda_1, \ldots, \lambda_m \in \RR$ such that $$[\Phi^j_k] = v^j e^t + \lambda_j P', \text{ for any } j \in \M.$$

    Observe that $[\Phi^{j_1}_k] = ve^t + \gamma P = v^{j_1}e^t + \lambda_{j_1}P'$.

    It follows that $(v-v^{j_1})e^t = \lambda_{j_1} P' - \gamma P$. Recall that $\gamma \neq 0$. The columns of the left hand side are equal. Hence, the columns of the right hand side are equal, which given $n > 2$ means that $P' = P$.

    Therefore, $[\Phi^j_k] = v^j e^t + \lambda_j P, \text{ for any } j \in \M.$ It  follows that $\alpha J + \beta Q = (\sum\limits_{j=1}^m v^j) e^t + (\sum\limits_{j=1}^m \lambda_j)P$. That is, $$(\alpha e - \sum\limits_{j=1}^m v^j) e^t = (\sum\limits_{j=1}^m \lambda_j)P - \beta Q.$$

    Here $\beta \neq 0$ and $n > 2$ and as before, we come to the conclusion that $Q = P$.
\end{proof}

We have shown that for a fixed $k$ the permutation matrices of $\Phi^j_k$ coincide. It remains to show that the same permutation matrix can be used for all $k$. We start with the following observation that will allow us to do so.

\begin{lemma}\label{lem:e_g-e_h}
    Let $P \in P(n)$, $n \geq 3$, be such that \begin{equation}\label{eq:e_g-e_h|Pe_g-e_h}\left( e_g - e_h \ | \ P(e_g - e_h)\right) \sim^s \left( e_g - e_h \ | \ Q^{-1}PQ(e_g - e_h) \right) \text{ for any } g, h \in \N \text{ and } Q \in P(n).\end{equation}

    Then $P = I$.
\end{lemma}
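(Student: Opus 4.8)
The plan is to make the relation $\sim^s$ explicit via Lemma \ref{lem:sim:strong}, assume $P \neq I$ for contradiction, and then exhibit a single admissible triple $(g,h,Q)$ for which the common row permutation demanded by $\sim^s$ cannot exist. Since the matrices involved are $n\times 2$, this reduces everything to a short combinatorial argument on indices.

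First I would unwind the hypothesis. By Lemma \ref{lem:sim:strong}, the relation $\left(e_g-e_h \mid P(e_g-e_h)\right)\sim^s\left(e_g-e_h \mid Q^{-1}PQ(e_g-e_h)\right)$ means there is a single $R\in P(n)$ with $\left(e_g-e_h \mid P(e_g-e_h)\right)=R\left(e_g-e_h \mid \sigma(e_g-e_h)\right)$, where $\sigma:=Q^{-1}PQ$. Comparing the two columns, and using $P(e_g-e_h)=e_{P(g)}-e_{P(h)}$ and $\sigma(e_g-e_h)=e_{\sigma(g)}-e_{\sigma(h)}$, the first column gives $R(e_g-e_h)=e_g-e_h$, so $R$ fixes both $g$ and $h$; the second column, by matching the (unique) $+1$ entries, gives $R(\sigma(g))=P(g)$. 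These two consequences are all I need.

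Next, assuming $P\neq I$, I would pick $g\in\N$ with $P(g)\neq g$ and set $h:=P(g)$, so that $h\neq g$ and the pair is admissible. The crucial step is to choose $Q$ so that $\sigma(g)\neq P(g)$: writing $b=P(g)$ and picking any $c\notin\{g,b\}$ (available because $n\geq 3$), take $Q=P_{(bc)}$, the transposition of $b$ and $c$. Since $g\notin\{b,c\}$ we get $Q(g)=g$, hence $\sigma(g)=Q^{-1}PQ(g)=Q(P(g))=Q(b)=c\neq b=P(g)$. Now combine the two observations: $R(\sigma(g))=P(g)=h$ while $R(h)=h$, yet $\sigma(g)=c\neq b=h$, so $R$ would send the two distinct indices $\sigma(g)$ and $h$ to the same index $h$, contradicting injectivity of $R$. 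Therefore no valid $R$ exists, contradicting the hypothesis, and so $P=I$.

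The main obstacle is precisely the middle step, namely producing a conjugate $\sigma=Q^{-1}PQ$ that disagrees with $P$ at a point $g$ moved by $P$, and this is exactly where the hypothesis $n\geq 3$ is used: for $n=2$ the group $S_2$ is abelian, so $\sigma=P$ always, the two matrices coincide, and the statement fails for $P=P_{(12)}$. The clean device that overcomes this is the explicit transposition $Q=P_{(bc)}$ with $c$ a third index, which fixes $g$ while moving the value $P(g)$ off itself; this makes the disagreement $\sigma(g)\neq P(g)$ automatic and, pleasingly, requires no separate treatment of permutations with or without fixed points, since the argument only ever uses a single moved point $g$ of $P$.
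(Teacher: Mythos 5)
Your proof is correct and follows essentially the same route as the paper: both assume $P\neq I$, pick $g$ with $Pe_g=e_h$ for some $h\neq g$, and conjugate by the transposition swapping $h$ with a third index (your $Q=P_{(bc)}$ is exactly the paper's $P_{(rs)}$). The only cosmetic difference is that the paper derives the contradiction by exhibiting a row $(-1\ |\ 1)$ present in the left-hand $n\times 2$ matrix but absent from the right-hand one, whereas you unwind $\sim^s$ into an explicit permutation $R$ via Lemma \ref{lem:sim:strong} and contradict its injectivity — the same obstruction viewed from the other side.
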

\begin{proof}
    Assume that $P \neq I$. Then there exist distinct $q, r \in \N$ such that $Pe_q = e_r$. Consider Equivalence \eqref{eq:e_g-e_h|Pe_g-e_h} for $g = q$, $h = r$ and $Q = P_{(rs)}$, where $s \neq q, r$. Note that $P_{(rs)}PP_{(rs)}e_q = P_{(rs)}Pe_q=e_s$ and $P_{(rs)}PP_{(rs)}e_r = P_{(rs)}Pe_s$.

    As a consequence, we obtain $$\left( e_q - e_r \ | \ e_r - Pe_r \right) \sim^s \left( e_q - e_r \ | \ e_s - P_{(rs)}Pe_s \right).$$

    Observe that the left-hand side matrix contains a row $(-1 \ | \ 1)$, while the right-hand side does not, a contradiction.
\end{proof} 

Finally, we show that we can assume that the permutation matrices of all $\Phi^j_k$ coincide, which is the goal of this subsection.

\begin{lemma}\label{lem:all_P}
    Let a linear operator $\Phi$ on $M_{n, m}$ preserve strong majorization on $\Omega^{col}_{n, m}$.

    Then there exists $P \in P(n)$ such that for every $i, j \in \M$
    
    $$[\Phi^j_i] = v^j_i e^t + \lambda^j_i P \text{ for some } v^j_i \in \RR^n,\text{ and } \lambda^j_i \in \RR .$$
\end{lemma}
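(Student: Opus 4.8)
The plan is to build on Lemma~\ref{lem:P_in_Phi_k}, which already gives, for each fixed column index $i \in \M$, a single permutation matrix $P_i \in P(n)$ such that $[\Phi^j_i] = v^j_i e^t + \lambda^j_i P_i$ for all $j \in \M$. The remaining task is therefore to show that these column-wise permutations $P_1, \ldots, P_m$ can all be taken equal. The natural strategy is to fix a reference index, say $k = 1$, and prove $P_i = P_1$ for every $i$ — or, more precisely, that whenever any $\lambda^j_i \neq 0$, the corresponding $P_i$ must agree with $P_1$ (when all the $\lambda^j_i$ vanish for a given $i$, the matrix $P_i$ is irrelevant and we are free to replace it by $P_1$).

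First I would dispose of the degenerate case in which all coefficients $\lambda^j_i = 0$ for every $i, j$; then every $[\Phi^j_i]$ has the form $v^j_i e^t$ and we may simply take $P = I$. So assume some $\lambda^{j_0}_{i_0} \neq 0$, and after relabelling assume this occurs for $i_0 = 1$; write $P = P_1$ for this reference permutation. The key idea is to feed $\Phi$ test matrices built from the rank-one perturbations $(e_g - e_h)e_{j}^t$ of the uniform matrix $\frac{1}{n}J$, exactly as in Lemmas~\ref{lem:P_in_Phi_k} and~\ref{lem:Phi-P=>sum...}, but now comparing two different \emph{output columns} rather than two summands within a single column. For a column stochastic $A = \frac{1}{n}J + \frac{1}{n}(e_g - e_h)e_{j}^t$, the strong majorization $A \sim^s QA$ forces $\Phi(A) \sim^s \Phi(QA)$, hence the full output matrices are equal up to a single common row permutation by Lemma~\ref{lem:sim:strong}. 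Computing the $i$-th output column gives $\Phi(A)^{(i)} = s_i + \tfrac{1}{n}\lambda^{j}_i P_i(e_g - e_h)$ for an appropriate constant vector $s_i$, and the crucial point is that the \emph{same} row permutation $P'$ (coming from $A \sim^s QA$, with $Q \in P(n)$ inducing $A \mapsto QA$) must simultaneously reconcile all $m$ columns. Writing $Q = P_{(\cdot)}$ as a conjugation on the perturbation and choosing indices appropriately, the two-column submatrix formed by column $1$ and column $i$ will be of precisely the shape analyzed in Lemma~\ref{lem:e_g-e_h}.

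Concretely, I would isolate a pair consisting of the reference column $1$ (where $\lambda^{j_0}_1 \neq 0$) and an arbitrary column $i$ with some $\lambda^{j}_i \neq 0$, arrange by diagonal scaling (Corollary~\ref{cor:AD<BD}) and by subtracting the constant $ve^t$ parts (which do not affect $\sim^s$ after a translation, cf.\ Lemma~\ref{lem:ev^t}) that both relevant entries reduce to $\pm(e_g - e_h)$ patterns, and apply a conjugating permutation $Q$. This produces an equivalence of the form $\left(e_g - e_h \mid P_1^{-1}P_i(e_g-e_h)\right) \sim^s \left(e_g - e_h \mid Q^{-1}P_1^{-1}P_i Q (e_g - e_h)\right)$ for all $g,h$ and all $Q$, so Lemma~\ref{lem:e_g-e_h} (for $n \geq 3$) yields $P_1^{-1}P_i = I$, i.e.\ $P_i = P_1 = P$. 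For columns $i$ all of whose coefficients $\lambda^j_i$ vanish, we set $\lambda^j_i = 0$ and reselect the permutation to be $P$, which is harmless. The case $n = 2$ must be handled directly, using $P_i \in \{I, J - I\}$ and the fact that $J - I = J - P$ lets us absorb the alternative permutation into the $v^j_i e^t$ term with a sign change, as was done in the $n=2$ branches of the preceding lemmas.

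The main obstacle I anticipate is the bookkeeping needed to guarantee that a \emph{single} permutation matrix $P$ works across all columns simultaneously: the equivalence $\Phi(A) \sim^s \Phi(QA)$ provides one common row permutation per test matrix, but to pin down $P_i$ I must choose the test perturbation to live in a single input column $e_j^t$ while reading off two distinct output columns, and ensure the constant offset vectors $s_i$ do not interfere — this is exactly the role played by the $\max$-of-constants arguments in Lemmas~\ref{lem:1a:saves0} and~\ref{lem:s+e_g-e_h}. Making the reduction to the clean two-column form of Lemma~\ref{lem:e_g-e_h} rigorous, in particular verifying that conjugation by $Q$ transfers correctly through the decomposition and that the $n \geq 3$ hypothesis of that lemma is genuinely available, will be the technically delicate part; everything else is a routine application of the structural results already established.
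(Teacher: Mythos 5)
Your overall strategy coincides with the paper's: start from Lemma~\ref{lem:P_in_Phi_k}, feed $\Phi$ perturbations of $\frac{1}{n}J$ supported on standard basis columns, compare two output columns of $\Phi(A)\sim^s\Phi(QA)$, strip the constant parts and the scalar coefficients, and invoke Lemma~\ref{lem:e_g-e_h} to force $P_{i_1}^{-1}P_{i_2}=I$. However, there is a genuine gap in the way you set up the test matrix. You insist that the perturbation ``live in a single input column $e_j^t$'' while you read off the two output columns $i_1$ and $i_2$. With $A=\frac{1}{n}J+\frac{1}{n}(e_g-e_h)e_j^t$, the perturbation appearing in output column $i$ is $\frac{1}{n}\lambda^j_i P_i(e_g-e_h)$, so to cancel the coefficients via Corollary~\ref{cor:AD<BD} you need \emph{both} $\lambda^j_{i_1}\neq 0$ and $\lambda^j_{i_2}\neq 0$ for the \emph{same} $j$. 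Nothing guarantees such a common $j$ exists: it can happen that $\lambda^{j_1}_{i_1}\neq 0$ and $\lambda^{j_2}_{i_2}\neq 0$ only for disjoint sets of input indices, in which case one of your two output columns carries a zero perturbation, the diagonal scaling matrix is singular, and the reduction to Lemma~\ref{lem:e_g-e_h} collapses. The paper handles this as a separate case by perturbing two input columns at once, taking $A=\frac{1}{n}\bigl((e_g-e_h)(e^t_{j_1}+e^t_{j_2})+J\bigr)$, which reinstates nonzero coefficients in both output columns while keeping $A$ column stochastic.

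A second, smaller issue: to ``dispense with'' the offsets $s_{i_1},s_{i_2}$ via Lemma~\ref{lem:ev^t} you need each of them to be a scalar multiple of $e$, and this is not automatic from the form $v^j_ie^t+\lambda^j_iP_i$ alone (the sum $\sum_j\Phi^j_i(\frac{1}{n}e)$ is a priori an arbitrary vector). The paper secures this by first proving Lemma~\ref{lem:Phi-P=>sum...}: whenever some $\lambda^{j}_i\neq 0$, the aggregate $\sum_{j}[\Phi^j_i]$ must equal $\alpha_iJ+\beta_iP_i$, which makes the offset a multiple of $e$. Your appeal to the ``max-of-constants arguments'' gestures in the right direction but does not supply this step; you should invoke (or reprove) that structural fact for the sum operator before subtracting the constants.
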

\begin{proof}
    By Lemma \ref{lem:P_in_Phi_k} for any $i \in \M$ there exists $P_i \in P(n)$ such that $$[\Phi^j_i] = v^j_i e^t + \lambda^j_i P_i, \text{ for some } v^j_i \in \RR^n \text{ and } \lambda^j_i \in \RR.$$

    We only need to prove that we can always take $P_1 = \ldots = P_m$. Once again, in case $n = 2$ we can freely choose permutation matrices. Assume that $n \geq 3$.

    If $\lambda^j_i = 0$ for all $j \in \M$, then we can freely choose any permutation matrices in operators $\Phi^j_i$.

    Assume that $P_{i_1}\neq P_{i_2}$, while $\lambda^{j_1}_{i_1} \neq 0$ for some $j_1 \in \M$ and $\lambda^{j_2}_{i_2} \neq 0$ for some $j_2 \in \M$.
    There are two possibilities.

    \textbf{Case 1.} There exists $j_1 \in \M$ such that $\lambda^{j_1}_{i_1}, \lambda^{j_1}_{i_2} \neq 0$.

    Consider arbitrary $g, h \in \M$ and $Q \in P(n)$. Let $A = \frac{1}{n}((e_g - e_h)e^t_{j_1} + J)$. Observe that $A \in \Omega^{col}_{n, m}$. In addition, $QA = \frac{1}{n}(Q(e_g - e_h)e^t_{j_1} + J)$.

    As $A \sim^s QA$, we obtain $\Phi(A)\sim^s \Phi(QA)$. In particular, $(\Phi(A))^{(i_1, i_2)}\sim^s (\Phi(QA))^{(i_1, i_2)}$.

    Due to Lemma \ref{lem:Phi-P=>sum...}, we have $\sum\limits_{j=1}^{m}[\Phi^j_{i_1}] = \alpha_{i_1} J + \beta_{i_1} P_{i_1}$ and $\sum\limits_{j=1}^{m}[\Phi^j_{i_2}] = \alpha_{i_2} J + \beta_{i_2} P_{i_2}$ for some $\alpha_{i_1}, \alpha_{i_2}, \beta_{i_1}, \beta_{i_2} \in \RR$.

    Therefore $$\begin{aligned}
        \Phi(A)^{(i_1)} &= \sum\limits_{j=1}^{m} \Phi^j_{i_1}(A^{(j)}) = \frac{1}{n}(\Phi^{j_1}_{i_1}(e_g - e_h)+\sum\limits_{j=1}^{m} \Phi^j_{i_1}(e)) = \frac{1}{n}(v^{j_1}_{i_1}e^t(e_g - e_h) + \lambda^{j_1}_{i_1}P_{i_1}(e_g - e_h)+\alpha_{i_1}ne + \beta_{i_1}e) = \\ &= (\alpha_{i_1} + \frac{1}{n}\beta_{i_1})e +\frac{1}{n}\lambda^{j_1}_{i_1}P_{i_1}(e_g - e_h).
    \end{aligned}$$

Similarly, 

    $$\begin{aligned}
    \Phi(A)^{(i_2)} &= (\alpha_{i_2} + \frac{1}{n}\beta_{i_2})e +\frac{1}{n}\lambda^{j_1}_{i_2}P_{i_2}(e_g - e_h);\\
    \Phi(QA)^{(i_1)} &= (\alpha_{i_1} + \frac{1}{n}\beta_{i_1})e +\frac{1}{n}\lambda^{j_1}_{i_1}P_{i_1}Q(e_g - e_h);\\
    \Phi(QA)^{(i_2)} &= (\alpha_{i_2} + \frac{1}{n}\beta_{i_2})e +\frac{1}{n}\lambda^{j_1}_{i_2}P_{i_2}Q(e_g - e_h).
    \end{aligned}$$

    Due to Lemma \ref{lem:ev^t}, we can dispense with $(\alpha_{i_1} + \frac{1}{n}\beta_{i_1})e$ and $(\alpha_{i_2} + \frac{1}{n}\beta_{i_2})e$ in $(\Phi(A))^{(i_1, i_2)}\sim^s (\Phi(QA))^{(i_1, i_2)}$.

    As a consequence, we obtain $$\left(\begin{array}{c|c}
        \frac{1}{n}\lambda^{j_1}_{i_1}P_{i_1}(e_g - e_h) & \frac{1}{n}\lambda^{j_1}_{i_2}P_{i_2}(e_g - e_h) 
    \end{array}\right)\sim^s \left(\begin{array}{c|c}
        \frac{1}{n}\lambda^{j_1}_{i_1}P_{i_1}Q(e_g - e_h) & \frac{1}{n}\lambda^{j_1}_{i_2}P_{i_2}Q(e_g - e_h) 
    \end{array}\right).$$

    Recall that $\lambda^{j_1}_{i_1}, \lambda^{j_1}_{i_2} \neq 0$. Then, due to Corollary \ref{cor:AD<BD} we can get rid of the coefficients to obtain $$\left(\begin{array}{c|c}
        P_{i_1}(e_g - e_h) & P_{i_2}(e_g - e_h) 
    \end{array}\right)\sim^s \left(\begin{array}{c|c}
        P_{i_1}Q(e_g - e_h) & P_{i_2}Q(e_g - e_h)
    \end{array}\right).$$

    Let us multiply both sides by $P_{i_1}^{-1}$ and after that multiply the right-hand side by $Q^{-1}$. Thus we obtain $$\left(\begin{array}{c|c}
        e_g - e_h & P_{i_1}^{-1}P_{i_2}(e_g - e_h) 
    \end{array}\right)\sim^s \left(\begin{array}{c|c}
       e_g - e_h & Q^{-1}P_{i_1}^{-1}P_{i_2}Q(e_g - e_h)
    \end{array}\right).$$

    Note that this equivalence holds for any $g, h \in \M$ and any $Q \in P(n)$. Then, by Lemma \ref{lem:e_g-e_h}, $P_{i_1}^{-1}P_{i_2} = I$, which contradicts $P_{i_1}\neq P_{i_2}$.
    
\textbf{Case 2.} There does not exist $j \in \M$ such that $\lambda^{j}_{i_1}, \lambda^{j}_{i_2} \neq 0$. However, there exist $j_1, j_2 \in \M$ such that $\lambda^{j_1}_{i_1}, \lambda^{j_2}_{i_2} \neq 0$. In this case, $\lambda^{j_2}_{i_1} = \lambda^{j_1}_{i_2} = 0$. This case is very similar to the previous one. We just take $A =\frac{1}{n}((e_g - e_h)(e^t_{j_1} + e^t_{j_2}) + J)$ to get the same relations as in Case 1.

For example, $$\begin{aligned}
        \Phi(A)^{(i_1)} &= \sum\limits_{j=1}^{m} \Phi^j_{i_1}(A^{(j)}) = \frac{1}{n}(\Phi^{j_1}_{i_1}(e_g - e_h) + \Phi^{j_2}_{i_1}(e_g - e_h) + \sum\limits_{j=1}^{m} \Phi^j_{i_1}(e)) \\ &= \frac{1}{n}(v^{j_1}_{i_1}e^t(e_g - e_h) + \lambda^{j_1}_{i_1}P_{i_1}(e_g - e_h) + v^{j_2}_{i_1}e^t(e_g - e_h) + \lambda^{j_2}_{i_1}P_{i_1}(e_g - e_h)+\alpha_{i_1}ne + \beta_{i_1}e) = \\ &= (\alpha_{i_1} + \frac{1}{n}\beta_{i_1})e +\frac{1}{n}\lambda^{j_1}_{i_1}P_{i_1}(e_g - e_h).
    \end{aligned}$$

    Similarly, 

    $$\begin{aligned}
    \Phi(A)^{(i_2)} &= (\alpha_{i_2} + \frac{1}{n}\beta_{i_2})e +\frac{1}{n}\lambda^{j_2}_{i_2}P_{i_2}(e_g - e_h);\\
    \Phi(QA)^{(i_1)} &= (\alpha_{i_1} + \frac{1}{n}\beta_{i_1})e +\frac{1}{n}\lambda^{j_1}_{i_1}P_{i_1}Q(e_g - e_h);\\
    \Phi(QA)^{(i_2)} &= (\alpha_{i_2} + \frac{1}{n}\beta_{i_2})e +\frac{1}{n}\lambda^{j_2}_{i_2}P_{i_2}Q(e_g - e_h).
    \end{aligned}$$

    The rest is identical to the case above and leads to $P_{i_1} = P_{i_2}$.
\end{proof}

\subsection{Characterization}

Lemma \ref{lem:all_P} already leads to a strong necessary condition.

\begin{theorem}\label{thm:necessary_form}
    Let a linear operator $\Phi$ on $M_{n, m}$ preserve strong majorization on $\Omega^{col}_{n, m}$. Then $$\Phi(X) = \sum\limits_{j=1}^{m} (e^t X^{(j)})S_j + PXR \text{ for some } S_1, \ldots, S_m \in M_{n, m}, \ R \in M_m \text{ and } P \in P(n).$$
\end{theorem}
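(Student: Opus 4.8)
The plan is to read off the matrices $S_1, \dots, S_m$, $R$ and the permutation $P$ directly from the component data supplied by Lemma \ref{lem:all_P}, so that the statement becomes a purely organizational consequence of the work already done. By Lemma \ref{lem:all_P} there is a single $P \in P(n)$, together with vectors $v^j_i \in \RR^n$ and scalars $\lambda^j_i \in \RR$ for $i, j \in \M$, such that $[\Phi^j_i] = v^j_i e^t + \lambda^j_i P$. The genuine mathematical difficulty---forcing one common permutation across all $m^2$ components---has therefore already been overcome upstream, and what remains is to repackage these components into the global form.

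First I would expand the $i$-th column of $\Phi(X)$ via the decomposition of Definition \ref{def:decomposition}, namely $\Phi(X)^{(i)} = \sum_{j=1}^m \Phi^j_i(X^{(j)})$. Applying $[\Phi^j_i] = v^j_i e^t + \lambda^j_i P$ to $X^{(j)}$ and using that $e^t X^{(j)}$ is a scalar gives $\Phi^j_i(X^{(j)}) = (e^t X^{(j)}) v^j_i + \lambda^j_i P X^{(j)}$. Summing over $j$ yields
$$\Phi(X)^{(i)} = \sum_{j=1}^m (e^t X^{(j)}) v^j_i + P \sum_{j=1}^m \lambda^j_i X^{(j)}.$$

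Next I would define the candidate matrices: let $S_j = \left(\begin{array}{c|c|c} v^j_1 & \cdots & v^j_m \end{array}\right) \in M_{n, m}$, that is, the matrix whose $i$-th column is $v^j_i$, and let $R \in M_m$ have entries $r_{ji} = \lambda^j_i$. With these choices the $i$-th column of $\sum_{j=1}^m (e^t X^{(j)}) S_j$ is $\sum_{j=1}^m (e^t X^{(j)}) v^j_i$, while the $i$-th column of $PXR$ is $PX R^{(i)} = P \sum_{j=1}^m r_{ji} X^{(j)} = P \sum_{j=1}^m \lambda^j_i X^{(j)}$. Comparing these column-by-column with the displayed expression shows that the two sides agree in every column, hence $\Phi(X) = \sum_{j=1}^m (e^t X^{(j)}) S_j + PXR$, which is exactly the claimed form.

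I expect no serious obstacle in this step itself: all the analytic content has been absorbed into Lemma \ref{lem:all_P}, and only the necessary (converse-free) direction is claimed here. The one point requiring care is bookkeeping---keeping the output-column index $i$ of $\Phi(X)$ distinct from the summation index $j$, and remembering that the lower index of $\Phi^j_i$ records the target column while the upper index records the summed source column---so that the correct vector $v^j_i$ lands in the $i$-th column of $S_j$ and the correct scalar $\lambda^j_i$ lands in the $(j,i)$ entry of $R$.
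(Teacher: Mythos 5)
Your proposal is correct and follows essentially the same route as the paper: it invokes Lemma \ref{lem:all_P} for the common permutation $P$, defines $S_j$ as the matrix with columns $v^j_1,\ldots,v^j_m$ and $R$ with $(j,i)$ entry $\lambda^j_i$ (the paper writes this as $r_{ij}=\lambda^i_j$, which is the same matrix), and verifies the identity column by column. The bookkeeping is handled correctly, so nothing further is needed.
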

\begin{proof}
    By Lemma \ref{lem:all_P}, there exists $P \in P(n)$ such that for every $i, j \in \M$
    
    $$[\Phi^j_i] = v^j_i e^t + \lambda^j_i P \text{ for some } v^j_i \in \RR^n,\text{ and } \lambda^j_i \in \RR .$$

    For every $j \in \M$ let $S_j = \left(\begin{array}{c|c|c}
        v^j_1 & \ldots & v^j_m
    \end{array}\right) \in M_{n, m}.$

    Define $R \in M_{m}$ by $r_{ij} = \lambda^i_j$.

    Then $$\begin{aligned}
        \Phi(X) &= \left(\begin{array}{c|c|c}
            \sum\limits_{j=1}^m \Phi^j_1(X^{(j)})  \ & \  \ldots  \ & \  \sum\limits_{j=1}^m \Phi^j_m(X^{(j)})
        \end{array}\right)=\left(\begin{array}{c|c|c}
            \sum\limits_{j=1}^m v^j_1 e^tX^{(j)} + \sum\limits_{j=1}^m \lambda^j_1 P X^{(j)}  \ & \   \ldots  \ & \   \sum\limits_{j=1}^m v^j_m e^tX^{(j)} + \sum\limits_{j=1}^m \lambda^j_m P X^{(j)}
        \end{array}\right) \\ 
        &= \left(\begin{array}{c|c|c}
            \sum\limits_{j=1}^m (e^tX^{(j)})v^j_1  \ & \   \ldots  \ & \  \sum\limits_{j=1}^m (e^tX^{(j)})v^j_m
        \end{array}\right) + \left(\begin{array}{c|c|c}
            \sum\limits_{j=1}^m \lambda^j_1 P X^{(j)}  \ & \   \ldots  \ & \   \sum\limits_{j=1}^m \lambda^j_m P X^{(j)}
        \end{array}\right) \\ 
        &=\sum\limits_{j=1}^m (e^tX^{(j)})\left(\begin{array}{c|c|c}
            v^j_1  \ & \  \ldots  \ & \   v^j_m
        \end{array}\right) + P\left(\begin{array}{c|c|c}
            \sum\limits_{j=1}^m \lambda^j_1 X^{(j)}  \ & \   \ldots  \ & \   \sum\limits_{j=1}^m \lambda^j_m X^{(j)}
        \end{array}\right)= \sum\limits_{j=1}^{m} (e^t X^{(j)})S_j + PXR.
    \end{aligned}$$
\end{proof}

\begin{remark}\label{rem:Phi(A)}
Let $\Phi(X) = \sum\limits_{j=1}^{m} (e^t X^{(j)})S_j + PXR \text{ for some } S_1, \ldots, S_m \in M_{n, m}, \ R \in M_m \text{ and } P \in P(n)$. 

If $A \in \Omega^{col}_{n, m}$, then $\Phi(A) = \sum\limits_{j=1}^{m} S_j + PAR$.
\end{remark}

 As we shall see next, the necessary condition provided by Theorem \ref{thm:necessary_form} is not sufficient.

\begin{corollary}\label{cor:necessary}
    Let a linear operator $\Phi$ on $M_{n, m}$ preserves strong majorization on $\Omega_{n, m}$ and $$\Phi(X) = \sum\limits_{j=1}^{m} (e^t X^{(j)})S_j + PXR \text{ for some } S_1, \ldots, S_m \in M_{n, m}, \ R \in M_m \text{ and } P \in P(n).$$

    If $R \neq 0$, then $\sum\limits_{j=1}^m S_j = e v^t$ for some $v \in \RR^m$.
\end{corollary}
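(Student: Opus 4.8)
The plan is to exploit Remark~\ref{rem:Phi(A)}: writing $S = \sum_{j=1}^m S_j$, every $A \in \Omega^{col}_{n,m}$ satisfies $\Phi(A) = S + PAR$. Since permuting rows of a column stochastic matrix keeps it column stochastic and produces an $\sim^s$-equivalent matrix, for every such $A$ and every $Q \in P(n)$ we have $A \sim^s QA$ (Lemma~\ref{lem:sim:strong}), hence $\Phi(A) \sim^s \Phi(QA)$, i.e. $S + PAR \sim^s S + PQAR$. The strategy is to feed this relation a family of probes that isolates a single rank-one perturbation of $S$ and then forces all rows of $S$ to coincide.

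Concretely, since $R \neq 0$ I would fix a row index $j_0$ with $R_{(j_0)} = \rho^t \neq 0$, and for distinct $g,h \in \N$ and a scalar $\theta \in (0,1]$ set $A = \frac1n J + \frac{\theta}{n}(e_g - e_h)e_{j_0}^t$. The bound $\theta \le 1$ keeps $A \geq 0$, and its perturbing column is zero-sum, so $A \in \Omega^{col}_{n,m}$. Using $PJ = J$ and $e_{j_0}^t R = \rho^t$ gives $\Phi(A) = S + \frac1n JR + \frac{\theta}{n}(e_{\sigma(g)} - e_{\sigma(h)})\rho^t$, where $Pe_i = e_{\sigma(i)}$, and likewise $\Phi(QA) = S + \frac1n JR + \frac{\theta}{n}(e_{\sigma q(g)} - e_{\sigma q(h)})\rho^t$. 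The term $\frac1n JR = e\big(\frac1n e^t R\big)$ has the form $e w^t$ and is common to both sides, so Lemma~\ref{lem:ev^t} lets me discard it. Letting $g,h,q$ and $\sigma$ range, this yields, with the $\tfrac1n$ absorbed into $\theta$,
$$ S + \theta(e_a - e_b)\rho^t \sim^s S + \theta(e_c - e_d)\rho^t $$
for all distinct $a,b$, all distinct $c,d$, and all sufficiently small $\theta > 0$.

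The final step, and the main obstacle, is to upgrade this to the statement that \emph{all} rows of $S$ coincide: note that a naive column-wise maximum argument would only control the columns $\ell$ with $\rho_\ell \neq 0$, so the whole-matrix structure must be used. I would fix distinct $a,b,c,d$ and, by Lemma~\ref{lem:sim:strong}, write $S + \theta(e_a - e_b)\rho^t = \Pi_\theta\big(S + \theta(e_c - e_d)\rho^t\big)$ for a permutation matrix $\Pi_\theta$. As $P(n)$ is finite, some fixed $\Pi$ occurs along a sequence $\theta_k \downarrow 0$; passing to the limit gives $\Pi S = S$, and then cancelling $S$ and dividing by $\theta_k$ gives $(e_a - e_b)\rho^t = \Pi(e_c - e_d)\rho^t$. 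Since $\rho \neq 0$, this forces $\pi(c) = a$ for the permutation $\pi$ underlying $\Pi$, while $\Pi S = S$ gives $S_{(\pi(c))} = S_{(c)}$, hence $S_{(a)} = S_{(c)}$. As $a \neq c$ were arbitrary (the case $a=c$ being trivial), every row of $S$ equals $S_{(1)}$, so $\sum_{j=1}^m S_j = S = e v^t$ with $v = (S_{(1)})^t \in \RR^m$, as required.
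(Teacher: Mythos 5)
Your proposal is correct, and its first half (the probe matrices $\tfrac1n J + \tfrac{\theta}{n}(e_g-e_h)e_{j_0}^t$, the use of Remark \ref{rem:Phi(A)}, and the removal of the common $\tfrac1n JR = ew^t$ term via Lemma \ref{lem:ev^t}) matches the paper's setup almost exactly; the paper arrives at the same relation $S + (e_g-e_h)R_{(k)} \sim^s S + Q(e_g-e_h)R_{(k)}$. Where you genuinely diverge is the endgame. The paper extracts $S = ev^t$ column by column: for each column $\ell$ of $S$ it finds an entry $r_{k\ell}\neq 0$, reads off the scalar relation $S^{(\ell)} + r_{k\ell}(e_g-e_h) \sim S^{(\ell)} + r_{k\ell}Q(e_g-e_h)$, and applies the max-entry Lemma \ref{lem:s+e_g-e_h}; the columns $j$ with $R^{(j)}=0$ are then handled by the auxiliary trick of multiplying the matrix relation on the right by $e_\ell + e_j$ to piggyback on an already-controlled column. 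You instead treat the whole matrix at once: using the rigidity of $\sim^s$ (Lemma \ref{lem:sim:strong}) you write the relation as $S + \theta(e_a-e_b)\rho^t = \Pi_\theta(S+\theta(e_c-e_d)\rho^t)$, pigeonhole over the finite group $P(n)$ along $\theta_k\downarrow 0$, pass to the limit to get $\Pi S = S$, and then cancel to force $\pi(c)=a$ and hence $S_{(a)}=S_{(c)}$ for all $a\neq c$. Both arguments are sound; yours buys a uniform treatment of all columns of $S$ with no case split on where $\rho$ (or $R$) vanishes, at the price of a limiting/pigeonhole argument that steps outside the purely majorization-theoretic toolkit the paper reuses (and which the paper's version avoids). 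One cosmetic caveat: your final step needs, for given $a\neq c$, companions $b\neq a$ and $d\neq c$, which exist for $n\geq 2$; the degenerate case $n=1$ is vacuous.
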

\begin{proof}

    Let $S = \sum\limits_{j=1}^m S_j$. If $R = 0$, then there is nothing to prove. Consider arbitrary $l \in \M$ with $R^{(l)}\neq 0$. Let $k \in \M$ such that $r_{kl} \neq 0$. 
    For arbitrary $g, h \in \N$ and $Q \in P(n)$ define $A = \frac{1}{n}(P^{-1}(e_g - e_h)e_k^t + J)$ and $B = \frac{1}{n}(P^{-1}Q(e_g - e_h)e_k^t + J)$. As $A, B \in \Omega^{col}_{n, m}$ and $A\sim^s B$, we conclude that $\Phi(A)\sim^s \Phi(B)$.

    Due to Remark \ref{rem:Phi(A)}, $\Phi(A) = S + PAR=S + \frac{1}{n}((e_g - e_h)R_{(k)} + JR)$. Dually,  $\Phi(B) =S + \frac{1}{n}(Q(e_g - e_h)R_{(k)} + JR)$. By Corollary \ref{cor:+J} we obtain 
    
    \begin{equation}\label{eq:cor:necessary}
        S + (e_g - e_h)R_{(k)} \sim^s S + Q(e_g - e_h)R_{(k)}
    \end{equation}

    Considering the $l$-th column of \eqref{eq:cor:necessary} we obtain $S^{(l)} + r_{kl}(e_g - e_h) \sim^s S^{(l)} + r_{kl}Q(e_g - e_h)$. As $g, h, Q$ are arbitrary and $r_{kl}\neq 0$, we can apply Lemma \ref{lem:e_g-e_h} and conclude that $S^{(l)} = v_l e$ for some $v_l \in \RR$.
    
    It remains to show that $S^{(j)} = v_j e$ for any $j\in \M$ with $R^{(j)} = 0$. Multiplying \eqref{eq:cor:necessary} by $e_l +e_j$ (multiplication on the right by the same matrix preserves majorization) we obtain $S^{(l)} + S^{(j)} + r_{kl}(e_g - e_h) \sim^s S^{(l)} + S^{(j)} + r_{kl}Q(e_g - e_h)$. As $S^{(l)}=v_l e$, the latter is equivalent to $S^{(j)} + r_{kl}(e_g - e_h) \sim^s S^{(j)} + r_{kl}Q(e_g - e_h)$. Same as before, this means that $S^{(j)} = v_j e$ for some $v_j \in \RR$.

    Finally, $R \neq 0$ implies $S = e (v_1 \ \ldots \ v_m)$.
\end{proof}

The following is the main result of the paper.

\begin{theorem}\label{thm}
    A linear operator $\Phi$ on $M_{n, m}$ preserves strong majorization on $\Omega^{col}_{n, m}$ if and only if the following holds:
    
    $$\Phi(X) = \sum\limits_{j=1}^{m} (e^t X^{(j)})S_j + PXR \text{ for some } S_1, \ldots, S_m \in M_{n, m}, \ R \in M_m \text{ and } P \in P(n).$$ Moreover, if $R \neq 0$, then $\sum\limits_{j=1}^m S_j = e v^t$ for some $v \in \RR^m$.
\end{theorem}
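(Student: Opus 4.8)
The two directions have very different characters, and most of the work for the necessity direction has already been carried out earlier in the section. For necessity, suppose $\Phi$ preserves strong majorization on $\Omega^{col}_{n,m}$. Theorem \ref{thm:necessary_form} already yields the representation $\Phi(X) = \sum_{j=1}^m (e^t X^{(j)}) S_j + PXR$, and Corollary \ref{cor:necessary} supplies the additional constraint that $\sum_{j=1}^m S_j = e v^t$ whenever $R \neq 0$. So the necessity direction is obtained simply by assembling these two results, and no further argument is required.

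The genuine remaining task is sufficiency. Here I would assume $\Phi$ has the stated form and fix $A, B \in \Omega^{col}_{n,m}$ with $A \preceq^s B$; the goal is $\Phi(A) \preceq^s \Phi(B)$. The first step is to simplify $\Phi$ on column stochastic matrices: since $e^t A^{(j)} = e^t B^{(j)} = 1$ for every $j \in \M$, Remark \ref{rem:Phi(A)} gives $\Phi(A) = S + PAR$ and $\Phi(B) = S + PBR$, where $S = \sum_{j=1}^m S_j$. If $R = 0$ then $\Phi(A) = S = \Phi(B)$, and equal matrices trivially satisfy $\preceq^s$ (take the doubly stochastic matrix $I$ in Definition \ref{def:strong}). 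This is exactly the degenerate regime in which the theorem imposes no condition on the $S_j$.

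The interesting case is $R \neq 0$, where the hypothesis forces $S = e v^t$. The plan is to absorb this term: by Lemma \ref{lem:ev^t}, the relation $\Phi(A) = e v^t + PAR \preceq^s e v^t + PBR = \Phi(B)$ is equivalent to $PAR \preceq^s PBR$. To establish the latter, I would use $A \preceq^s B$ to write $A = DB$ for some $D \in \Omega_n$, so that $PAR = PDBR = (PDP^{-1})(PBR)$. The final point is that $PDP^{-1}$ is again doubly stochastic: it is nonnegative, and since $Pe = e$ one checks directly that $e^t(PDP^{-1}) = e^t$ and $(PDP^{-1})e = e$. Hence $PDP^{-1} \in \Omega_n$, which gives $PAR \preceq^s PBR$ and completes the sufficiency.

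I do not expect a serious obstacle in this final step, precisely because the heavy lifting was done in establishing the necessary form; the one point deserving care is the logical interplay between the two directions. The side condition $S = e v^t$ is not a cosmetic extra: without it the absorption step via Lemma \ref{lem:ev^t} would fail, since adding an arbitrary matrix $S$ to both sides of a strong majorization need not preserve it. Thus the fact that Corollary \ref{cor:necessary} produces exactly the term $e v^t$ — the one kind of additive term that Lemma \ref{lem:ev^t} can absorb — is what makes the characterization tight, and this is the feature I would emphasize when writing up the proof.
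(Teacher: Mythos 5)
Your proposal is correct and follows essentially the same route as the paper: necessity by assembling Theorem \ref{thm:necessary_form} and Corollary \ref{cor:necessary}, and sufficiency by reducing to $\Phi(A)=S+PAR$ on $\Omega^{col}_{n,m}$, splitting on $R=0$ versus $R\neq 0$, and absorbing $ev^t$ via Lemma \ref{lem:ev^t}. The only (harmless) difference is that where the paper invokes Theorem \ref{thm:LiPoon} to get $PAR\preceq^s PBR$ and to handle $R=0$, you verify these directly via the conjugation $PDP^{-1}\in\Omega_n$ and the observation that $\Phi(A)=S=\Phi(B)$ when $R=0$.
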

\begin{proof}
    The necessity was proven in Corollary \ref{cor:necessary}. It just remains to prove that the linear operator of this form does preserve strong majorization on $\Omega^{col}_{n, m}$. If $R = 0$, then the result follows from Theorem \ref{thm:LiPoon}. Assume that $R\neq 0$. Then $\sum\limits_{j=1}^m S_j = e v^t$ for some $v \in \RR^m$.
    
    Consider arbitrary $A, B \in \Omega^{col}_{n,m}$. Then $\Phi(A) = ev^t + PAR$ and $\Phi(B) = ev^t + PBR$.
    
    Assume that $A \preceq^s B$. Then $PAR \preceq PBR$ by Theorem \ref{thm:LiPoon}. By Lemma \ref{lem:ev^t} it follows that $\Phi(A)\preceq^s \Phi(B)$.
\end{proof}

Note that the form of operators in Theorem \ref{thm:necessary_form} is a very natural structure that encompasses both types of Li Poon operators in Theorem \ref{thm:LiPoon}. The same is true of the main result: the two forms of operators in Theorem \ref{thm:LiPoon} are just particular cases of operators preserving majorization on $\Omega^{col}_{n, m}$ from Theorem \ref{thm}.

\begin{remark}
    Naturally, Theorem \ref{thm:vector} is a particular case of Theorem \ref{thm} for $m = 1$.
\end{remark}

In Section \ref{sec:Reduction} we have shown that majorization can always be reduced to the column stochastic matrices. Despite that, restriction to column stochastic matrices provides a richer structure of linear preservers, as follows from Theorem \ref{thm}. The following example illustrates that.

\begin{example}\label{ex:last}
Consider a linear operator $\Phi(X)$ on $M_2$ defined by $\Phi(X) = (e^t X^{(1)} - e^t X^{(2)})I + X$.

Then $\Phi(C) = C$ for any $C \in \Omega^{col}_2$. 

On the other hand, consider, for example $A = \begin{pmatrix}
    1 & 0\\0 & 0
\end{pmatrix}$ and $B = \begin{pmatrix}
    0 & 0\\1 & 0
\end{pmatrix}$. Clearly, $A \sim^s B$. However, $\Phi(A) = \begin{pmatrix}
    2 & 0\\0 & 1
\end{pmatrix}$ and $\Phi(B) = \begin{pmatrix}
    1 & 0 \\1 & 1
\end{pmatrix}$. It follows that $\Phi$ does not preserve strong majorization, since $\Phi(A) \not\preceq^s \Phi(B)$.

This example also shows that the components of a linear preserver of strong majorization for column stochastic matrices (see the remarks after Lemma \ref{lem:Phi:decomposition}) do not, in general, preserve majorization for probability distributions.

Take, for example, $\Phi^1_1(v) = \Phi(ve_1^t)e_1 = ((e^t v) I + ve_1^t)e_1 = \begin{pmatrix}
    2v_1 + v_2 \\ v_2
\end{pmatrix}$. This operator does not preserve majorization for probability distributions, since $\Phi^1_1(\begin{pmatrix}
    1 \\ 0
\end{pmatrix}) = \begin{pmatrix}
    2 \\ 0
\end{pmatrix} \not\preceq \Phi^1_1(\begin{pmatrix}
    0 \\ 1
\end{pmatrix}) = \begin{pmatrix}
    1 \\ 1
\end{pmatrix}$.
\end{example}

\section*{Acknowledgments} The author thanks Professor Alexander Guterman for valuable discussions on majorization theory and linear preserver problems.

{\small
\bibliographystyle{plain}
\bibliography{mybibfile}
}
\end{document}